\documentclass[10pt]{iopart}
\pdfoutput=1
\usepackage{geometry}
%\geometry{left=2cm,right=2cm,top=2cm,bottom=2cm}
%\geometry{left=2.5cm,right=2.5cm,top=3cm,bottom=3cm}
%Uncomment next line if AMS fonts required
\usepackage[title]{appendix}
\usepackage{iopams}
\usepackage{epsfig}
%\usepackage{amsmath}
%% The amssymb package provides various useful mathematical symbols
\usepackage{amssymb}
\usepackage{amsthm,amssymb}
\usepackage{mathrsfs}
\usepackage{booktabs}
\usepackage{bm}
\usepackage{array}
\newtheorem{theorem}{Theorem}[section]
\newtheorem{proposition}{Proposition}[section]
\usepackage{algorithm}
\usepackage{algorithmic}
\usepackage{amstext}

\newtheorem{lemma}[theorem]{Lemma}

\newtheorem{remark}[theorem]{Remark}

\newtheorem{assumption}{Assumption}[section]

\usepackage{caption}
\usepackage{graphicx}
\usepackage{epstopdf}
\usepackage{subfigure}
\usepackage{multirow}
\usepackage{rotating}
\graphicspath{{fig/}}
\usepackage[space, compress, sort]{cite}
\usepackage{lineno,hyperref}
\usepackage{cleveref}
\usepackage{color,soul}

\makeatletter % `@' now normal "letter"
\@addtoreset{equation}{section}
\makeatother  % `@' is restored as "non-letter"

\makeatletter

\makeatother
\begin{document}

\title[]{Stochastic gradient descent method with convex penalty for ill-posed problems in Banach spaces}

%\eads{\mailto{williams@ucl.ac.uk}, \mailto{williams@ucl.ac.uk}}
\author{ Ruixue Gu$^1$, Zhenwu Fu$^2$$^{,\footnotemark[1]}$, Bo Han$^2$ and Hongsun Fu$^1$}
\address{ $^1$ School of Science, Dalian Maritime University, Dalian 116026, PR China\\[1mm]
$^2$ Department of Mathematics, Harbin Institute of Technology, Harbin, Heilongjiang Province, 150001, PR China}
\ead{ruixue\_gu@dlmu.edu.cn, fuzw@hit.edu.cn, bohan@hit.edu.cn and fuhongsun@dlmu.edu.cn}
%\vspace{10pt}
%\begin{indented}
%\item[]{\color{black}{September 2022}}
%\end{indented}
\footnotetext[1]{Corresponding author}
\begin{abstract}
In this work, we investigate a stochastic gradient descent method for solving inverse problems that can be written as systems of linear or nonlinear ill-posed equations in Banach spaces. The method uses only a randomly selected equation at each iteration and employs the convex function as the penalty term, and thus it is scalable to the problem size and has the ability to detect special features of solutions such as nonnegativity and piecewise constancy. To suppress the oscillation in iterates and reduce the semi-convergence of such methods, by incorporating the spirit of discrepancy principle, an adaptive strategy for choosing the step size is suggested. Under certain conditions, we establish the regularization results of the method under an {\it a priori} stopping rule. Several numerical simulations on computed tomography and schlieren imaging are provided to demonstrate the effectiveness of the method. Finally, we study an {\it a posteriori} stopping rule for SGD-$\theta$ method and show the finite iterations termination property.
\end{abstract}
% Uncomment for PACS numbers
%\pacs{00.00, 20.00, 42.10}
%
% Uncomment for keywords
\vspace{1pc}
\noindent{ Keywords}: stochastic gradient descent method, linear and nonlinear inverse problems, system of ill-posed equations, convex penalty, convergence analysis, tomography
%\vspace{2pc}
%\noindent{\it Keywords}: XXXXXX, YYYYYYYY, ZZZZZZZZZ
% Uncomment for Submitted to journal title message
%\submitto{\JPA}
%
% Uncomment if a separate title page is required
%\maketitle
%
% For two-column output uncomment the next line and choose [10pt] rather than [12pt] in the \documentclass declaration
%\ioptwocol
%

\setulcolor{red}
%%----------------------------section 1 Introduction---------------------------- %%------------------------------------------------------------------
\section{Introduction}

In this paper, we consider ill-posed inverse problems governed by the system
\begin{equation}\label{nonlinear equation}
F_i(x)=y_i, ~~~i=1,2,\ldots,N
\end{equation}
consisting of $N$ equations, where $F_i:\mathscr{D}(F_i)\subset \mathcal{X} \rightarrow {\mathcal{Y}_i}$ are linear or nonlinear operators between Banach spaces $\mathcal{X}$ and ${\mathcal{Y}_i}$ with domain $\mathscr{D}(F_i)$. The systems of the form (\ref{nonlinear equation}) arise naturally in many practical applications, such as various tomography techniques and geophysics \cite{Engl1996Regularization,Natterer2001,Hanafy1991}. Throughout this paper, we assume that (\ref{nonlinear equation}) has a solution, which might not be unique. To incorporate the {\it priori} information of the sought solution, one may choose a proper, lower semi-continuous, convex function $\theta :\mathcal{X} \to \left( { - \infty ,\infty } \right]$ as the penalty term and select a solution $x^\dag$ of (\ref{nonlinear equation}) such that
\begin{equation}\label{minimum solution}
{D_{{\xi _0}}}\theta \left( {{x^\dag },{x_0}} \right): = \mathop {\min }\limits_{x \in \mathscr{D}\left( \theta  \right) \cap \mathcal{D}} \left\{ {{D_{{\xi _0}}}\theta \left( {x,{x_0}} \right):{F}_i\left( x \right) = {y_i}}, i=1,2,\ldots,N \right\},
\end{equation}
where $x_0 \in \mathscr{D}\left( {\partial \theta } \right)$ is an initial guess and $\xi _0  \in \partial \theta\left( {x_0} \right)$ is in the subdifferential of $\theta$ at $x_0$ (refer to (\ref{2.1}) for definition). Here $\mathcal{D}: = \bigcap\nolimits_{i = 1}^N {\mathscr{D}\left( {{F_i}} \right)}  \ne \emptyset $ and ${D_{{\xi _0}}}\theta\left( {x,{x_0}} \right)$ denotes the Bregman distance \cite{1967-Bregman-p200-217} induced by $\theta$ at $x_0$ in the direction $\xi_0$ (see (\ref{bregman distance}) for the definition). Because of unavoidable measurement errors, the exact data $y: = \left( {{y_1},{y_2},\cdots,{y_N}} \right)$ are generally not known; we can only access noisy data ${y^\delta }: = \left( {y_1^\delta ,y_2^\delta ,\cdots,y_N^\delta } \right)$ satisfying
 \[\left\| {y_i^\delta  - {y_i}} \right\| \le {\delta _i},~i=1,2,\ldots,N\]
 with the noise level $\delta_i>0$. Such inverse problems are inherently ill-posed in the sense that small perturbations of the data may result in large perturbations of the solution. Thus, how to use noisy data $y^\delta$ to reconstruct the solution $x^\dag$ becomes a central topic, see \cite{Engl1996Regularization,BARBARAFTIN} and the references therein.

By reformulating (\ref{nonlinear equation}) as a single equation
\[F\left( x \right) = y~~\text{with}~~F: = \left( {{F_1}, \cdots ,{F_N}} \right): \mathcal{D} = \bigcap\nolimits_{i = 1}^N {\mathscr{D}\left( {{F_i}} \right)}  \to \mathcal{Y} = {\mathcal{Y} _1} \times  \cdots  \times {\mathcal{Y}_N}\]
equivalently, a variety of regularization methods have been developed for solving $F\left( x \right) = y$ directly, including variational regularization methods \cite{Engl1996Regularization,Hofmann2007,Engl1989} and iterative regularization methods \cite{Hanke1995A,Bo2011Iterative,Zhong2019,KNS2008}. Among existing methods, the most prominent regularization method is the Landweber iteration \cite{Rommel_2020,Jin_2022}, with the form of
\begin{equation}\label{land}
 \begin{array}{l}
\xi _{n + 1}^\delta  = \xi _n^\delta  - t_n^\delta {F'}{\left( {x_n^\delta } \right)^*}{J_r^\mathcal{Y}}\left( {F\left( {x_n^\delta } \right) - {y^\delta }} \right),\\[1mm]
x_{n + 1}^\delta  = \arg \mathop {\min }\limits_{x \in \mathcal{X}} \left\{ {\theta \left( x \right) - \left\langle {\xi _{n + 1}^\delta ,x} \right\rangle } \right\},
\end{array}
\end{equation}
where $t_n^\delta$ is the step size and $J_r^\mathcal{Y}$ denotes the duality mapping of $\mathcal{Y}$ corresponding to the gauge function $t \to {t^{r - 1}}$ with  $1<r<\infty$. %{\color{red} This method can be derived by applying the mirror descent method to the functional
%\[\Phi \left( x \right) = \frac{1}{r}{\left\| {F\left( x \right) - {y^\delta }} \right\|^r} = \sum\nolimits_{i = 1}^N { \frac{1}{r}{{\left\| {{F_i}\left( x \right) - y_i^\delta } \right\|}^r}}.\]}
The iteration (\ref{land}) was proven to be a convergent regularization method in \cite{Jin_2022} when the stopping index $n_{\delta}$ of iteration is determined by an {\it a priori} stopping rule or the discrepancy principle
\begin{equation*}
\|F(x_{n_{\delta}}^{\delta})-y^{\delta}\|\leq\tau\delta<\|F(x_n^\delta)-y^{\delta}\|,~~0\le n<n_\delta
\end{equation*}
with $\tau$ a constant satisfying $\tau >1$ and
$$\delta : = \sqrt[r]{{\delta _1^r +  \cdots  + \delta _N^r}}$$
 denoting the total noise level of the noisy data. We note that in each iteration, the minimization problem for defining $x_{n + 1}^\delta$ in (\ref{land}) can be easily solved for many interesting choices of $\theta$, see \cref{3numbericalex} for elaboration; however, the update of $\xi _{n + 1}^\delta$ requires the solvers of the forward and adjoint problems for all $N$ equations in (\ref{nonlinear equation}) and can be numerically quite expensive in case $N$ is huge, rendering the method inefficient in practical applications.

To overcome this drawback, an alternative technique is to use Kaczmarz-type methods which cyclically consider each equation in (\ref{nonlinear equation}) and require only calculating one equation per iteration instead of all $N$ equations in (\ref{nonlinear equation}), leading to a significant reduction of computational cost. One may refer to \cite{Kowar_2002,Haltmeier_2007,Haltmeier_20071,Leitao_2016,LiHal_2018} for some convergence results on Kaczmarz-type methods in Hilbert spaces. By combining the Landweber iteration in \cite{Rommel_2020,Jin_2022} with Kaczmarz strategy, a general version of Landweber-Kaczmarz method was developed in \cite{JinQ_2016,JinWang2013} to solve (\ref{nonlinear equation}) in Banach spaces, given by
\begin{equation}\label{lk}
\begin{array}{*{20}{l}}
{\xi _{n + 1}^\delta  = \xi _n^\delta  - t_n^\delta {{F'_{\left[ n \right]}}}{{\left( {x_n^\delta } \right)}^*}{J_r^{\mathcal{Y}_{\left[ n \right]}}}\left( {{F_{\left[ n \right]}}\left( {x_n^\delta } \right) - y_{\left[ n \right]}^\delta } \right),}\\[1mm]
{x_{n + 1}^\delta  = \arg \mathop {\min }\limits_{x \in {\cal X}} \left\{ {\theta \left( x \right) - \left\langle {\xi _{n + 1}^\delta ,x} \right\rangle } \right\},}
\end{array}
\end{equation}
where $t_n^\delta$ denotes the step size and $\left[ n \right]:= (n~\text{mod}~N)+1$. For each $i=1, \cdots ,N$, $J_r^{\mathcal{Y}_i}:{\mathcal{Y}_i} \to \mathcal{Y}_i^*$ denotes the duality mapping of $\mathcal{Y}_i$ with $1<r<\infty$. Under certain conditions, the convergence results and regularization property of the method have been derived in \cite{JinWang2013} in which the numerical simulations illustrate its nice feature. The efficiency of this method, however, depends essentially on the order and the number of the equations in (\ref{nonlinear equation}). Therefore, it is desirable to develop more efficient algorithms to solve the system (\ref{nonlinear equation}).

Stochastic gradient descent (SGD) method was originally introduced by Robbins and Monro in \cite{Robbins_1951} to solve large-scale least square type problems. Instead of utilizing the equation in a cyclic manner like Kaczmarz-type methods, SGD method uses only a randomly selected equation in each iteration and hence scales very well with the problem size. This highly desirable property has received tremendous attention in machine learning \cite{Bottou_2010,Bottou_2018}, and has stimulated considerable subsequent work, see \cite{Johnson-Zhang-2013,Reddi-Hefny-2016,Hertrich-Steidl-2022,Gitman-Lang-2019} and references therein. And extensive numerical examples in \cite{Chen_2018,Jia_2010} indicate that SGD method holds significant potential for solving large-scale systems of ill-posed equations. Despite its computational appeal, the study on the theoretical property of SGD method for inverse problems is still scarce and has just started. When $\mathcal{X}$ and $\mathcal{Y}_i$ are both Hilbert spaces, some theoretical results of SGD method have been developed in \cite{Jiao_2017,JinB_2019,Lu_2022} for linear inverse problems and in \cite{JinB_2020} for nonlinear inverse problems. Recently, SGD method was extended in \cite{JinQ_2023,JinBker2023} to solve linear inverse problems in Banach spaces. In case ${F_i}: \mathcal{X} \to \mathcal{Y}_i$ are linear operators from Banach space $\mathcal{X}$ to Hilbert space $\mathcal{Y}_i$, the SGD method with convex penalty terms in \cite{JinQ_2023}, namely SGD-$\theta$ method, includes the following form
\begin{equation}\label{sgdlinear}
\begin{array}{l}
\xi _{n + 1}^\delta  = \xi _n^\delta  - t_n^\delta {F_{{i_n}}^*}\left( {{F_{{i_n}}}x_n^\delta  - y_{{i_n}}^\delta } \right),\\[1mm]
x_{n + 1}^\delta  = \arg \mathop {\min }\limits_{x \in \mathcal{X}} \left\{ {\theta \left( x \right) - \left\langle {\xi _{n + 1}^\delta ,x} \right\rangle } \right\},
\end{array}
\end{equation}
where $i_n \in \left\{ {1,2, \cdots ,N} \right\}$ is sampled uniformly at random. The convergence analysis and convergence rate of SGD-$\theta$ method have been studied in \cite{JinQ_2023} under {\it a priori} choice of the stopping index $n$ and several choices of the step size $t_n^\delta$;  the numerical simulations  presented there clearly show its superior performance. We note that the work in \cite{JinQ_2023} requires $\mathcal{Y}_i$ to be a Hilbert space and thus the method performs well when the data is corrupted by  Gaussian noise or the  uniformly distributed noise; in many applications, however, the noisy data may be contaminated by the impulsive noise, in which case one may need to choose $\mathcal{Y}_i$ to be a Banach space to remove the effect of the noise \cite{AlanBovik2006Handbook,2012-Clason-p505-536}. Moreover, the work in \cite{JinQ_2023} only concerns linear ill-posed systems. Considering the appearance of nonlinear systems of the form (\ref{nonlinear equation}) in a wide range of applications, it is of much importance to extend the work in \cite{JinQ_2023} to cover systems of nonlinear ill-posed equations. To tackle these issues, in this work we formulate an extension of SGD-$\theta$ method for solving linear as well as nonlinear systems of the form  (\ref{nonlinear equation}) with both $\mathcal{X}$ and $\mathcal{Y}_i$ being Banach spaces. This motivates our algorithm
\begin{equation}\label{sgdintro}
\begin{array}{l}
\xi _{n + 1}^\delta  = \xi _n^\delta  - t_n^\delta {F_{{i_n}}'}\left( {x_n^\delta } \right)^*J_r^{\mathcal{Y}_{i_n}}\left( {{F_{{i_n}}}\left( {x_n^\delta } \right) - y_{{i_n}}^\delta } \right),\\[1mm]
x_{n + 1}^\delta  = \arg \mathop {\min }\limits_{x \in \mathcal{X}} \left\{ {\theta \left( x \right) - \left\langle {\xi _{n + 1}^\delta ,x} \right\rangle } \right\},
\end{array}
\end{equation}
where the random index $i_n$ is selected uniformly from the set $\left\{ {1,2, \cdots ,N} \right\}$ and $t_n^\delta$ is the suitably chosen step size.
%For each $i=1, \cdots ,N$, $J_r^{\mathcal{Y}_i}$ denotes the duality mapping of $\mathcal{Y}_i$ with $1<r<\infty$.
%{\color{red} Since this paper focuses on stochastic iterative algorithms, the theoretical results of traditional algorithms are no longer valid, and theoretical analysis can only be conducted from a statistical perspective. To this end, this paper employs relevant theories of probability and statistics to analyze the algorithms. Furthermore, the algorithms studied in this paper incorporate general convex penalty terms, which represent a significant improvement for stochastic algorithms but also pose certain challenges for theoretical derivation. We will overcome these difficulties by utilizing content related to convex analysis.}

The main contributions of this work are as follows.
First, in contrast to polynomially decreasing or constant step size customarily employed for SGD \cite{JinB_2019,Lu_2022,JinB_2020}, we incorporate the spirit of discrepancy principle into  the step size $t_n^\delta$ with the hope of suppressing the oscillation in iterates and reducing the semi-convergence of such methods. Under certain conditions, we prove the convergence results of SGD-$\theta$ method (\ref{sgdintro}) in Banach spaces under an {\it a priori} stopping rule.
Due to the possible nonlinearity of  $F_i$ and non-Hilbertian structure of $\mathcal{X}$ and $\mathcal{Y}_i$, compared with \cite{JinQ_2023}, our convergence analysis is much more challenging. The analysis is derived by exploiting nonlinear regularization theory in Banach spaces and stochastic analysis; see \cref{3themethod}. Second, we provide some numerical simulations on computed tomography and schlieren imaging in \cref{3numbericalex}. The numerical results validate the effectiveness of the step size, and illustrate that our method (\ref{sgdintro}) could effectively cope with the data containing various types of noise and works for linear as well as nonlinear systems of ill-posed equations. Third, we consider an {\it a posteriori} stopping rule for SGD-$\theta$ method and establish its finite iterations termination property.

This paper is organized as follows. In \cref{preliminaries}, we review some preliminary results concerning convex analysis and Banach spaces that are crucial in the convergence analysis of the method. In \cref{3themethod}, under certain conditions, we first establish the convergence in expectation of the method with exact data. Then, for the case of noisy data, we show the stability property and further prove the regularization results of the method under {\it a priori} stopping rule. In \cref{3numbericalex}, we provide some numerical simulations on computed tomography and schlieren imaging to illustrate the effectiveness of the method. In \cref{conclusion}, we present a summary and possible directions for future research. Finally, in \ref{appendix_sgd}, we design an {\it a posteriori} stopping rule for SGD-$\theta$ method and show its finite iterations termination property.

%%----------------------------section 2  Preliminaries---------------------------- %%------------------------------------------------------------------
\section{Preliminaries}\label{preliminaries}
In this section, we introduce some notions and basic facts related to convex analysis and Banach spaces; for more information we refer the reader to \cite{Cioranescu1990Geometry,Schuster2012Regularization,Zalinescu2002Convex}.

 Let $\mathcal{X}$ and  $\mathcal{Y}$ be two Banach spaces with dual spaces ${\mathcal{X}^*}$ and ${\mathcal{Y}^*}$ respectively. The norms of $\mathcal{X}$ and  $\mathcal{Y}$ are denoted by $\left\|  \cdot  \right\|$ that should be clear from the context which one is referred to.  For $x \in \mathcal{X}\;\text{and}\;\ {x^*} \in {\mathcal{X}^*}$, we denote by $\left\langle {{x^*},x} \right\rangle  = {x^*}\left( x \right)$ the duality pairing. We use ``$\rightarrow$'' and ``$\rightharpoonup$'' to denote the strong convergence and weak convergence respectively. For a bounded linear operator $A:\mathcal{X} \to \mathcal{Y}$, let $A^*:\mathcal{Y}^* \to \mathcal{X}^*$ and $\mathcal{N}\left( A \right)$ denote its adjoint and its null space respectively.

For a convex function $f :\mathcal{X} \to \left( { - \infty ,\infty } \right]$, the effective domain of $f$ is defined by $\mathscr{D}\left( f \right): = \left\{ {x \in \mathcal{X}:f \left( x \right) < \infty } \right\}$. $f$ is called proper when $\mathscr{D}\left( f  \right) \ne \emptyset $.
  For ${x \in \mathcal{X}}$, we define
  \begin{equation}\label{2.1}
  \partial f \left( x \right): = \left\{ {{\xi} \in {\mathcal{X}^*}:f \left( \bar x \right) - f \left( x \right) - \left\langle {{\xi},\bar x - x} \right\rangle  \ge 0,~\forall~\bar x \in \mathcal{X}} \right\},
  \end{equation}
which is called the subdifferential $\partial f \left( x \right)$ of $f $. Each ${\xi} \in \partial f \left( x \right)$ is called a subgradient of $f $ at ${x}$.
Let $\mathscr{D}\left( {\partial f } \right): = \left\{ {x \in \mathscr{D}\left( f  \right):~\partial f \left( x \right) \ne \emptyset } \right\}$.
Given $x \in \mathscr{D}\left( \partial f  \right)~\text{and}~\xi  \in \partial f \left( x \right)$, the quantity
 \begin{equation}\label{bregman distance}
 {D_{\xi} }f \left( {\bar x,x} \right): = f \left( \bar x \right) - f \left( x \right) - \left\langle {\xi,\bar x - x} \right\rangle,~~~\forall \bar x \in \mathcal{X}
 \end{equation}
 is called the Bregman distance induced by $f$ at $x$ in the direction $\xi$. Clearly, ${D_{\xi} }f \left( {\bar x,x} \right)\ge 0$ for $x,\bar x \in \mathcal{X}$.

 A proper function $f :\mathcal{X} \to \left( { - \infty ,\infty } \right]$ is called $p$-convex with $p\ge 2$
  if there exists a constant ${\sigma} > 0$ such that
\begin{equation}\label{pconvex}
f \left( {tx + \left( {1 - t} \right)\bar{x}} \right) + {\sigma}t\left( {1 - t} \right){\left\| {x - \bar{x}} \right\|^p} \le tf \left( x \right) + \left( {1 - t} \right)f \left( \bar{x} \right)
\end{equation}
for all $x,\bar{x} \in \mathcal{X}$ and $0 \le t \le 1$.

Given a proper, lower semi-continuous and convex function ${f }$,  let  ${f ^*}:{\mathcal{X}^*} \to \left( { - \infty ,\infty } \right]$ denote the Legendre-Fenchel conjugate of ${f }$, i.e.,
  \begin{equation}
  \;{f ^*}\left( {\xi}  \right): = \mathop {\sup }\limits_{x \in \mathcal{X}} \left\{ {\left\langle {{\xi} ,x} \right\rangle  - f \left( x \right)} \right\}.
  \end{equation}
It can be shown that ${f ^*}$ is also proper, lower semi-continuous and convex. According to Proposition 4.4.1 in \cite{Schirotzek2007Nonsmooth}, we have,
for any $x \in \mathcal{X},{\xi} \in {\mathcal{X}^*}$,
\[\left\langle {{\xi} ,x} \right\rangle  \le f \left( x \right) + {f ^*}\left( {\xi}  \right)\]
and
\begin{equation}\label{2.2}
{\xi}  \in \partial f \left( x \right) \Leftrightarrow x \in \partial {f ^*}\left( {\xi}  \right) \Leftrightarrow \left\langle {{\xi} ,x} \right\rangle = f \left( x \right) + {f ^*}\left( {\xi}  \right).
\end{equation}
By the subdifferential calculus, there also holds
\begin{equation}\label{subcaclulus}
x \in \partial {f^*}\left( \xi  \right) \Leftrightarrow x = \arg \mathop {\min }\limits_{z \in \mathcal{X}} \left\{ {f\left( z \right) - \left\langle {\xi ,z} \right\rangle } \right\}.
\end{equation}
From (\ref{2.2}), it follows that
\begin{equation}\label{bregmanconju}
{D_\xi }f\left( {\bar x,x} \right) = f\left( {\bar x} \right)+{f ^*}\left( \xi \right) - \left\langle {\xi ,\bar x} \right\rangle
\end{equation}
for all $\bar x \in \mathcal{X}$, $x \in \mathscr{D}\left( \partial f  \right)~\text{and}~\xi  \in \partial f \left( x \right)$.

The further results of $p$-convex function $f$ and its Legendre-Fenchel conjugate $f^*$ are stated below; one refer to \cite[Corollary 3.5.11]{Zalinescu2002Convex} for more details.
%%%%%%%%%%%%%%%%%%%%%%%%%%Lemma%%%%%%%%%%%%%%%%%%%%%%%%%%%%%%%%%%%%%%%%%%%%%%%%%%%%%%%%
\begin{lemma}\label{lemma2.2}
Let $f :\mathcal{X} \to \left( { - \infty ,\infty } \right]$ be a proper, lower semi-continuous and  $p$-convex function with $p\ge 2$ in the sense that (\ref{pconvex}) is satisfied for some constant $\sigma>0$. Then,
\begin{itemize}
\item [(i)]  for $x \in \mathscr{D}\left( {\partial f } \right),{\xi} \in \partial f \left( x \right)$, there holds
\begin{equation}\label{theta p-convex}
{D_{{\xi}}}f \left( {\bar x,x} \right) \ge {\sigma}{\left\| {\bar{x} - x} \right\|^p}, ~\forall \bar x \in \mathcal{X}.
\end{equation}
 \item [(ii)] for any $x \in \mathscr{D}\left( {\partial f } \right),{\xi} \in \partial f \left( x \right)$ and ${\eta}\in {\mathcal{X}^*}$, we have
 \begin{equation}\label{theta * p*-smooth}
{f ^*}\left( {{\eta}} \right) - {f ^*}\left( {{\xi}} \right) - \left\langle {x,{\eta} - {\xi}} \right\rangle \le \frac{1}{{{p^*{\left( 2 \sigma \right)^{ {p^*}-1}}}}}{\left\| {{\eta} - {\xi}} \right\|^{{p^*}}},
 \end{equation}
where $p^*$ is the number conjugate to $p$, i.e., ${p^{ - 1}} + {\left( {{p^*}} \right)^{ - 1}} = 1$.
\item [(iii)] $\mathscr{D}\left( {{f ^*}} \right) = {\mathcal{X}^*}$. ${{f ^*}}$ is Fr\'{e}chet {\color{black}{differentiable}} and its gradient $\nabla {f ^*}:{\mathcal{X}^*} \to \mathcal{X}$ satisfies
 \begin{equation}\label{Dtheta *}
\left\| {\nabla {f^*}\left( \eta  \right) - \nabla {f^*}\left( \xi  \right)} \right\| \le {\left( {\frac{{\left\| {\eta  - \xi } \right\|}}{{2\sigma }}} \right)^{\frac{1}{{p - 1}}}}
 \end{equation}
for all ${\eta},~ {\xi} \in {\mathcal{X}^*}$.
\end{itemize}
\end{lemma}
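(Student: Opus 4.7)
The plan is to derive all three parts directly from the $p$-convexity defining inequality (\ref{pconvex}) and the Fenchel identities in (\ref{2.2})--(\ref{bregmanconju}). Throughout, fix $x\in\mathscr{D}(\partial f)$ and $\xi\in\partial f(x)$.

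For (i), I would substitute $s=1-t\in(0,1]$ in (\ref{pconvex}), rearrange, and divide by $s$ to obtain
\[
\frac{f(x+s(\bar x-x))-f(x)}{s}\le f(\bar x)-f(x)-\sigma(1-s)\|\bar x-x\|^p.
\]
Since $f$ is convex, the left-hand side decreases to the right directional derivative $f'(x;\bar x-x)$ as $s\to 0^+$, and this derivative dominates $\langle\xi,\bar x-x\rangle$ for every $\xi\in\partial f(x)$. Passing to the limit and rearranging yields (\ref{theta p-convex}).

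For (ii), I would start from $f^*(\eta)=\sup_z\{\langle\eta,z\rangle-f(z)\}$, use the Fenchel identity $f^*(\xi)=\langle\xi,x\rangle-f(x)$, and substitute the lower bound $f(z)\ge f(x)+\langle\xi,z-x\rangle+\sigma\|z-x\|^p$ just proved in (i). After the change of variables $u=z-x$ and cancellation, this gives
\[
f^*(\eta)-f^*(\xi)-\langle x,\eta-\xi\rangle\le\sup_{u\in\mathcal{X}}\{\langle\eta-\xi,u\rangle-\sigma\|u\|^p\}.
\]
The supremum is evaluated by the duality identity $\sup_{\|u\|=r}\langle\eta-\xi,u\rangle=r\|\eta-\xi\|$ followed by the one-dimensional optimization $\sup_{r\ge 0}\{r\|\eta-\xi\|-\sigma r^p\}=\|\eta-\xi\|^{p^*}/(p^*(\sigma p)^{p^*-1})$. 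Since $p\ge 2$ forces $(\sigma p)^{p^*-1}\ge(2\sigma)^{p^*-1}$, the cleaner stated bound (\ref{theta * p*-smooth}) follows. The main obstacle in the whole proof is this constant bookkeeping; the natural constant produced by direct computation is $(\sigma p)^{p^*-1}$ rather than $(2\sigma)^{p^*-1}$, and one must observe that the hypothesis $p\ge 2$ bridges the two.

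For (iii), the lower bound from (i) grows super-linearly in $\|z\|$, so $z\mapsto\langle\eta,z\rangle-f(z)$ is bounded above for every $\eta\in\mathcal{X}^*$, giving $\mathscr{D}(f^*)=\mathcal{X}^*$. The $p^*$-upper bound (\ref{theta * p*-smooth}) together with $p^*>1$ guarantees that $\partial f^*(\xi)$ is single-valued and that $f^*$ is Fr\'echet differentiable on $\mathcal{X}^*$; by (\ref{2.2}), $\nabla f^*(\xi)$ is the unique $x$ with $\xi\in\partial f(x)$. For the H\"older bound (\ref{Dtheta *}), I would set $x=\nabla f^*(\xi)$, $y=\nabla f^*(\eta)$, apply (i) twice to obtain $D_\xi f(y,x)\ge\sigma\|y-x\|^p$ and $D_\eta f(x,y)\ge\sigma\|x-y\|^p$, and add; the left-hand sides collapse into $\langle\eta-\xi,y-x\rangle$, yielding $\langle\eta-\xi,y-x\rangle\ge 2\sigma\|x-y\|^p$. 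Combining with the duality estimate $\langle\eta-\xi,y-x\rangle\le\|\eta-\xi\|\,\|y-x\|$ and rearranging gives (\ref{Dtheta *}).
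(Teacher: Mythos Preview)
Your argument is correct and considerably more detailed than the paper's own treatment: the paper does not prove this lemma at all but simply cites \cite[Corollary~3.5.11]{Zalinescu2002Convex}. Your self-contained derivation from the defining inequality~(\ref{pconvex}) via the Fenchel relations~(\ref{2.2})--(\ref{bregmanconju}) is exactly the route that underlies that reference, and your remark that the raw constant $(\sigma p)^{p^*-1}$ coming out of the scalar optimisation dominates $(2\sigma)^{p^*-1}$ precisely because $p\ge 2$ is the correct way to match the stated bound. One minor point you pass over in~(iii): invoking~(ii) to get Fr\'echet differentiability at \emph{every} $\xi\in\mathcal{X}^*$, and then writing $x=\nabla f^*(\xi)$, $y=\nabla f^*(\eta)$ with $\xi\in\partial f(x)$, $\eta\in\partial f(y)$, presupposes that the range of $\partial f$ is all of $\mathcal{X}^*$, i.e.\ that the infimum in the definition of $f^*(\xi)$ is attained in $\mathcal{X}$. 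This follows from the super-linear coercivity you already noted once $\mathcal{X}$ is reflexive, which the paper imposes later in Assumption~\ref{5asumption operator}(a) but not in the lemma itself; strictly within the lemma's hypotheses one would close the gap by a density--uniform-continuity extension, or simply record reflexivity as an additional standing assumption.
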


On a Banach space $\mathcal{Y}$, we consider for $1 < r < \infty$ the convex function $y \to {{{{\left\| y \right\|}^r}} \mathord{\left/
 {\vphantom {{{{\left\| y \right\|}^r}} r}} \right.
 \kern-\nulldelimiterspace} r}$ with its subdifferential at $y$ given by
\[J_r^\mathcal{Y}\left( y \right): = \left\{ {\xi  \in {\mathcal{Y}^*}:\left\| \xi  \right\| = {{\left\| y \right\|}^{r - 1}}~\text{and}~\left\langle {\xi ,y} \right\rangle  = {{\left\| y \right\|}^r}} \right\},\]
which gives the duality mapping $J_r^\mathcal{Y}:\mathcal{Y} \to {2^{{\mathcal{Y}^*}}}$  with gauge function $t \mapsto {t^{r - 1}}$.
When $\mathcal{Y}$ is uniformly smooth in the sense that the modulus of smoothness as%${\rho _X}\left( \tau  \right):\left[ {0,\infty } \right) \to \left[ {0,\infty } \right)$
\[{\rho _\mathcal{Y}}\left( \varepsilon  \right): = \frac{1}{2}\sup \left\{ {\left\| {\bar y + y} \right\| + \left\| {\bar y - y} \right\| - 2:\left\| \bar y \right\| = 1,\left\| y \right\| \le \varepsilon } \right\}\]
satisfies {\color{black}{$\mathop {\lim }\nolimits_{\varepsilon  \to 0} {{{\rho _\mathcal{Y}}\left( \varepsilon  \right)} \mathord{\left/
 {\vphantom {{{\rho _\mathcal{Y}}\left( \varepsilon  \right)} \varepsilon }} \right.
 \kern-\nulldelimiterspace} \varepsilon } = 0$}}, the duality mapping $J_r^\mathcal{Y}$ with $1<r<\infty$ is single valued and uniformly continuous on bounded sets.
% {\color{red} One may refer to \cite{Schuster2012Regularization} for some facts on duality mappings and some examples of uniformly smooth Banach spaces, including the sequence spaces $\ell^r$, the Lebesgue spaces $L^r$, and the Sobolev spaces $W^{s,r}$ with $1<r<\infty$. If $\mathcal{Y}=\ell^r(\mathbb{R})$, then $J_p^\mathcal{Y}$ is single valued, given by $
%J_p^\mathcal{Y}\left( y \right) = \left\| y \right\|_r^{p - r}{\left| y \right|^{r - 1}}\text{sign}\left( y \right)$.}

%%%-----------------------------------    section 3 ------------------             ----------------------------------------------------------------------
\section{Convergence}\label{3themethod}
In this section, we will establish the convergence analysis of SGD-$\theta$ method in Banach spaces under an {\it a priori} stopping rule. Before proceeding further, we impose some assumptions on $\theta$ and the operators $F_i$, where ${B_\rho }\left( {{x_0}} \right): = \left\{ {x \in \mathcal{X}:\left\| {x - {x_0}} \right\| \le \rho } \right\}$ and $\mathcal{D}: = \bigcap\nolimits_{i = 1}^N {\mathscr{D}\left( {{F_i}} \right)}  \ne \emptyset $.
%%%%%%%%%%%%%%%%%%%%%%%%%assumption%%%%%%%%%%%%%%%%%%%%%%%%%%%%%%%%%%%%%%%%%%%%%%%%%%%%%%%%
\begin{assumption}\label{5asumption theta}
$\theta :\mathcal{X} \to \left( { - \infty ,\infty } \right]$ is a  proper, lower semi-continuous, $p$-convex function with $p\ge 2$ satisfying (\ref{theta p-convex}) for some constant $\sigma>0$.
\end{assumption}

\begin{assumption}
\begin{itemize}
\item [(a)] $\mathcal{X}$ is a  reflexive Banach space and each $\mathcal{Y}_i$, $i=1,2,\ldots,N$, is a uniformly smooth Banach space.
\item [(b)] There exists $\rho>0$ such that ${B_{2\rho }}\left( {{x_0}} \right) \subset \mathcal{D}$ and (\ref{nonlinear equation}) has a solution $x_*$ in $ \mathscr{D}\left( \theta  \right)$ satisfying
   ${D_{{\xi _0}}}\theta \left( { x_*,{x_0}} \right) \le {\sigma}{\rho ^p}$.
   \item [(c)]  Each ${{F_i}}$ is weakly closed on $\mathcal{D}$, i.e., for any sequence $\left\{ {{x_n}} \right\} \subset \mathcal{D}$ satisfying $ {x_n}\rightharpoonup x\in \mathcal{X}$ and {\color{black}{${F_i}\left( x_n \right) \rightharpoonup v_i\in \mathcal{Y}_i$}}, there hold $x\in \mathcal{D}$ and ${F_i}\left( x \right) =v_i$.
\item [(d)]  $F_i$ is Fr\'{e}chet differentiable on $\mathcal{D}$, and $x \to F_i'\left( x \right)$ is continuous on ${B_{2\rho }}\left( {{x_0}} \right)$; moreover,
     \[\left\| {F_i'\left( x \right)} \right\| \le {B_0},~~~~\forall x\in{B_{2\rho }}\left( {{x_0}} \right)\]
       for some constant $B_0>0$ and
    there is $0 \le \eta  < 1$ such that
\begin{equation}\label{TCC}
\left\| {{F_i}\left( x \right) - {F_i}\left( {\bar x} \right) - F_i'\left( {\bar x} \right)\left( {x - \bar x } \right)} \right\| \le \eta \left\| {{F_i}\left( x \right) - {F_i}\left( {\bar x} \right)} \right\|
\end{equation}
 for all $x,\bar x  \in {B_{2\rho }}\left( {{x_0}} \right)$ and $i=1,2,\ldots,N$.
\end{itemize}
\label{5asumption operator}
\end{assumption}

The conditions in Assumption \ref{5asumption operator} are commonly used in the convergence analysis of iterative regularization methods for ill-posed inverse problems \cite{Hanke1995A, KNS2008}. The uniform smoothness of $\mathcal{Y}_i$ in condition (a) ensures that the duality mapping $J_r^{\mathcal{Y}_{i}}$ is single-valued and continuous for each $i=1,2,\ldots,N$ and $1<r<\infty$. The inequality (\ref{TCC}) is referred to as tangential cone condition and has been verified for a class of nonlinear inverse problems \cite{Hanke1995A,MR3001307}. Note that when the operators $F_i$ are linear, (\ref{TCC}) holds with $\eta =0$.
According to Assumption \ref{5asumption operator} (b) and (\ref{minimum solution}), there holds
\begin{equation}\label{6xdag}
{D_{{\xi _0}}}\theta \left( {{x^\dag },{x_0}} \right)\le {D_{{\xi _0}}}\theta \left( {{x_*},{x_0}} \right) \le \sigma {\rho ^p},
\end{equation}
which together with Assumption \ref{5asumption theta} yields $\left\| {{x^\dag } - {x_0}} \right\| \le \rho $, i.e., $x^\dag \in {B_\rho }\left( {{x_0}} \right)\cap \mathscr{D}\left( \theta  \right)$.
Under Assumptions \ref{5asumption theta} and \ref{5asumption operator}, one can use  \cite[Lemma 3.2]{JinWang2013} to show that $x^\dag$ is uniquely defined.

%%%%%%%%%%%%%%%%%%%%%%%%%%%%%%%%%%%%%%%%%%%%%%%%%%%%%%%%%%%%%%%%%%%%%%%%%%%%%%%%%%%%%%%%%%%%%%%%%%%%%%%%%%%%%%%%%%%%
 We now formulate the stochastic gradient descent (SGD-$\theta$) method with convex penalty terms for solving  inverse problems (\ref{nonlinear equation}) in Banach spaces. Let $x_0^\delta:=x_0 \in \mathscr{D}\left( {\partial \theta } \right)$ and $\xi _0^\delta:= \xi _0  \in \partial \theta\left( {x_0} \right)$ be the initial guesses, then the SGD-$\theta$ method  updates the iterates $\xi_{n}^\delta$ and $x_{n}^\delta$ by
\begin{equation}\label{sgd}
\begin{array}{l}
\xi _{n + 1}^\delta  = \xi _n^\delta  - t_n^\delta {F_{{i_n}}'}\left( {x_n^\delta } \right)^*J_r^{\mathcal{Y}_{i_n}}\left( {{F_{{i_n}}}\left( {x_n^\delta } \right) - y_{{i_n}}^\delta } \right),\\[1mm]
x_{n + 1}^\delta  = \arg \mathop {\min }\limits_{x \in \mathcal{X}} \left\{ {\theta \left( x \right) - \left\langle {\xi _{n + 1}^\delta ,x} \right\rangle } \right\},
\end{array}
\end{equation}
where ${i_n} \in \left\{ {1,2, \cdots ,N} \right\}$ is selected uniformly at random and $J_r^{\mathcal{Y}_{i_n}}$ denotes the duality mapping of $\mathcal{Y}_{i_n}$ with $1<r<\infty$. The step size $t_{{n}}^\delta$ is determined by
\begin{equation}\label{3choicestep}
t_n^\delta  = \left\{ {\begin{array}{*{20}{l}}
{\tilde t_n^\delta {{\left\| {{F_{{i_n}}}\left( {x_n^\delta } \right) - y_{{i_n}}^\delta } \right\|}^{p - r}},~~~~\left\| {{F_{{i_n}}}\left( {x_n^\delta } \right) - y_{{i_n}}^\delta } \right\| > \tau {\delta _{{i_n}}},}\\[1mm]
{0,~~~~~~~~~~~~~~~~~~~~~~~~~~~~~~~~otherwise}
\end{array}} \right.
\end{equation}
with preassigned $\tau>1$ and
\[\tilde t_n^\delta  = \min \left\{ {\frac{{{\mu _0}{{\left\| {{F_{{i_n}}}\left( {x_n^\delta } \right) - y_{{i_n}}^\delta } \right\|}^{p(r - 1)}}}}{{{{\left\| {{{F}_{{i_n}}'}{{\left( {x_n^\delta } \right)}^*}J_r^{{{\cal Y}_{{i_n}}}}\left( {{F_{{i_n}}}\left( {x_n^\delta } \right) - y_{{i_n}}^\delta } \right)} \right\|}^p}}},{\mu _1}} \right\}\]
for some positive constants $\mu_0$ and $\mu_1$.
\begin{remark}
\begin{itemize}
  \item[(i)] According to Assumptions \ref{5asumption theta} and \ref{5asumption operator}, the minimization problem for defining $x_{n+1}^\delta$ in (\ref{sgd}) has a unique minimizer, refer to \cite[Proposition 3.5.8]{Zalinescu2002Convex}. Indeed, by virtue of (\ref{subcaclulus}) and the differentiability of $\theta^*$ guaranteed by Lemma \ref{lemma2.2} (iii), we have
\begin{equation}\label{xix}
{x_{ n+ 1}^\delta} = \nabla {\theta ^*}\left( {{\xi _{ n + 1}^\delta}} \right).
\end{equation}
  \item[(ii)] In each iteration, the method (\ref{sgd}) chooses an index $i_n$ randomly from $\left\{ {1,2, \cdots ,N} \right\}$ to form the partial term
\[{\Phi _{{i_n}}}\left( x \right) = \frac{1}{r}{\left\| {{F_{{i_n}}}\left( x \right) - y_{{i_n}}^\delta } \right\|^r}\]
   of the data fidelity objective $\Phi \left( x \right) = \frac{1}{Nr}\sum\nolimits_{i = 1}^N {{{\left\| {{F_i}\left( x \right) - y_i^\delta } \right\|}^r}}$ and then uses its gradient $\partial {\Phi _{{i_n}}}\left( x _n^{\delta}\right) = F_{{i_n}}'\left( {x_n^\delta } \right)^*J_r^{{{\cal{Y}}_{{i_n}}}}\left( {{F_{{i_n}}}\left( {x_n^\delta } \right) - y_{{i_n}}^\delta } \right)$ at $x_n^\delta$ as an estimator of the full gradient $\partial \Phi \left( {x_n^\delta } \right)$  for updating $\xi_{n+1}^\delta$ from $\xi_{n}^\delta$.
  Due to the existence of the estimated gradient noise, the iterates exhibit pronounced oscillations; and due to the ill-posedness of problem (\ref{nonlinear equation}), the method using noisy data demonstrates semi-convergence, i.e., the iterates tends to the solution of (\ref{nonlinear equation}) at the beginning, and, after a certain number of iterations, the iterates diverges. Therefore, it is challenging to produce a good approximate solution. The choice of the step size is crucial for the convergence of the method and should be selected carefully. In contrast to polynomially decreasing or constant step size customarily employed for SGD \cite{JinB_2019,Lu_2022,JinB_2020}, inspired by the recent study \cite{JinQ_2023}, our step size (\ref{3choicestep}) uses an adaptive rule and  incorporates the spirit of the discrepancy principle. This step size schedule could efficiently suppress the oscillations and reduce the effect of semi-convergence, as indicated by the numerical experiments in  \cref{3numbericalex}.

\end{itemize}
\end{remark}

Due to the stochastic choice of $i_n$, the iterates $\xi_n^\delta$ and $x_n^\delta$ are random. The step size $t_n^\delta$ given by (\ref{3choicestep}) depends on  $i_n$ and the realization of $\left( {{i_0}, \cdots ,{i_{n - 1}}} \right)$ through the random variable $x_n^\delta$. To establish the convergence results of the method (\ref{sgd}), we need to use tools from stochastic analysis.
Let ${\Lambda _N} := \left\{ {1,2, \cdots ,N} \right\}$, and let $\mathcal{S}_{N}$ be the $\sigma$-algebra generated by all subsets of ${\Lambda _N}$. At each iteration, the iterates $\xi_n^\delta$ and $x_n^\delta$ are updated by choosing an index $i_n$ from ${\Lambda _N} $ uniformly at random. Therefore, for each $n\ge 1$, one could think of $\xi_n^\delta$ and $x_n^\delta$ on the sample space
$\Lambda _N^n = {\Lambda _N} \times  \cdots  \times {\Lambda _N}$
with the $\sigma$-algebra $\mathcal{S}_N^{ \otimes n}$ and the uniformly distributed probability ${\mathbb{P}_n}$. According to the Kolmogorov extension theorem \cite{Bhat_2007}, there exists a unique probability $\mathbb{P}$ on the measurable space $\left( {\Omega ,\mathcal{F}} \right): = \left( {\Lambda _N^\infty ,\mathcal{S}_N^{ \otimes \infty }} \right)$ such that each ${\mathbb{P}_n}$ is consistent with $\mathbb{P}$.
Below $ \mathbb{E}\left[ \cdot\right]$ denotes the expectation on $\left( {\Omega ,\mathcal{F},\mathbb{P}} \right)$ and $\mathcal{F}_n$ denotes the natural filtration with  $\mathcal{F}_n:=\sigma\left( {{i_0}, \ldots ,{i_{n - 1}}} \right)$. By the law of total expectation, we have
\[\mathbb{E}\left[ \phi  \right] = \mathbb{E}\left[ {\mathbb{E}\left[ {\phi |{\mathcal{F}_n}} \right]} \right]\]
for any random variable $\phi$, where ${\mathbb{E}\left[ {\phi |{\mathcal{F}_n}} \right]}$ is the conditional expectation of $\phi$ with respect to $\mathcal{F}_n$; this identity will be used in the forthcoming analysis.
To measure the convergence of SGD-$\theta$ method, we employ the expectation of the Bregman distance $ \mathbb{E}\left[ {{D_{\xi _n^\delta }}\theta \left( {\hat x,x_n^\delta } \right)} \right]$, where $\hat x \in {B_{2\rho} }\left( {{x_0}} \right)\cap \mathscr{D}\left( \theta  \right)$ denotes a solution of (\ref{nonlinear equation}). The next lemma states the monotonicity of the mean iteration error $ \mathbb{E}\left[ {{D_{\xi _n^\delta }}\theta \left( {\hat x,x_n^\delta } \right)} \right]$.
\begin{lemma}\label{smonotoniciy}
Let Assumptions \ref{5asumption theta} and \ref{5asumption operator} hold.  Let $\mu_0>0$ and $\tau>1$ be chosen such that
\begin{equation}\label{c1}
{c_0} := 1 - \eta  - \frac{{1 + \eta }}{\tau } - \frac{{p - 1}}{p}{\left( {\frac{{{\mu _0}}}{{2\sigma }}} \right)^{\frac{1}{{p - 1}}}} > 0.
\end{equation}
Then, $x_n^\delta \in {B_{2\rho} }\left( {{x_0}} \right)$ for all $n\ge 0$. Moreover, for any solution $\hat x$ of (\ref{nonlinear equation}) in ${B_{2\rho} }\left( {{x_0}} \right)\cap \mathscr{D}\left( \theta  \right)$, there hold
\begin{equation}\label{monobreg}
{D_{\xi _{n + 1}^\delta }}\theta \left( {\hat x,x_{n + 1}^\delta } \right) \le {D_{\xi _n^\delta }}\theta \left( {\hat x,x_n^\delta } \right)
\end{equation}
and
\begin{equation}\label{exmonobreg}
\mathbb{E}\left[ {{D_{\xi _{n + 1}^\delta }}\theta \left( {\hat x,x_{n + 1}^\delta } \right)} \right] -\mathbb{E}\left[ {{D_{\xi _n^\delta }}\theta \left( {\hat x,x_n^\delta } \right)} \right]\le - \frac{{{c_0}}}{N}\sum\limits_{i = 1}^N { \mathbb{E}\left[t_n^\delta {{\left\| {{F_i}\left( {x_n^\delta } \right) - y_i^\delta } \right\|}^r}\right] }
\end{equation}
for $n\ge 0$.
\end{lemma}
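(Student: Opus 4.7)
The plan is to derive a pointwise one-step descent inequality and then promote it to the expectation bound, with the inclusion $x_n^\delta\in B_{2\rho}(x_0)$ established inductively along the way. The starting point is the conjugate identity \eqref{bregmanconju}, which gives $D_{\xi_{n+1}^\delta}\theta(\hat x,x_{n+1}^\delta)-D_{\xi_n^\delta}\theta(\hat x,x_n^\delta)=\theta^*(\xi_{n+1}^\delta)-\theta^*(\xi_n^\delta)-\langle\xi_{n+1}^\delta-\xi_n^\delta,\hat x\rangle$. Applying \eqref{theta * p*-smooth} of Lemma \ref{lemma2.2}(ii) at $(\xi,\eta)=(\xi_n^\delta,\xi_{n+1}^\delta)$ and using $x_n^\delta=\nabla\theta^*(\xi_n^\delta)$ from \eqref{xix} yields the working estimate
\[D_{\xi_{n+1}^\delta}\theta(\hat x,x_{n+1}^\delta)-D_{\xi_n^\delta}\theta(\hat x,x_n^\delta)\le \langle x_n^\delta-\hat x,\xi_{n+1}^\delta-\xi_n^\delta\rangle+\tfrac{1}{p^*(2\sigma)^{p^*-1}}\|\xi_{n+1}^\delta-\xi_n^\delta\|^{p^*}.\]

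Next, substitute the SGD-$\theta$ update and set $r_n^\delta:=F_{i_n}(x_n^\delta)-y_{i_n}^\delta$. The inner-product term becomes $-t_n^\delta\langle F_{i_n}'(x_n^\delta)(x_n^\delta-\hat x),J_r^{\mathcal{Y}_{i_n}}(r_n^\delta)\rangle$. I will write $F_{i_n}'(x_n^\delta)(x_n^\delta-\hat x)=r_n^\delta+(y_{i_n}^\delta-y_{i_n})+\bigl(F_{i_n}(\hat x)-F_{i_n}(x_n^\delta)-F_{i_n}'(x_n^\delta)(\hat x-x_n^\delta)\bigr)$ using $F_{i_n}(\hat x)=y_{i_n}$, and then apply the tangential cone condition \eqref{TCC} (which is legitimate provided $x_n^\delta,\hat x\in B_{2\rho}(x_0)$) together with $\|J_r^{\mathcal{Y}_{i_n}}(r_n^\delta)\|=\|r_n^\delta\|^{r-1}$ and $\langle r_n^\delta,J_r^{\mathcal{Y}_{i_n}}(r_n^\delta)\rangle=\|r_n^\delta\|^r$. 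This yields the lower bound $(1-\eta)\|r_n^\delta\|^r-(1+\eta)\delta_{i_n}\|r_n^\delta\|^{r-1}$; on the active set $\{\|r_n^\delta\|>\tau\delta_{i_n}\}$ it dominates $(1-\eta-(1+\eta)/\tau)\|r_n^\delta\|^r$, and on its complement $t_n^\delta=0$ so the bound is trivial.

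For the quadratic remainder, the algebraic identity $(p^*-1)(p-1)=1$ (equivalently $(p^*-1)p=p^*$) lets me factor $(\tilde t_n^\delta)^{p^*}A^{p^*}=\tilde t_n^\delta(\tilde t_n^\delta A^p)^{p^*-1}$ with $A:=\|F_{i_n}'(x_n^\delta)^*J_r^{\mathcal{Y}_{i_n}}(r_n^\delta)\|$. The definition of $\tilde t_n^\delta$ gives $\tilde t_n^\delta A^p\le\mu_0\|r_n^\delta\|^{p(r-1)}$, and after multiplying through by $\|r_n^\delta\|^{(p-r)p^*}$ and unwinding $t_n^\delta=\tilde t_n^\delta\|r_n^\delta\|^{p-r}$ I obtain $(t_n^\delta)^{p^*}A^{p^*}\le\mu_0^{p^*-1}t_n^\delta\|r_n^\delta\|^r$. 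Plugging this into the remainder and recalling $1/p^*=(p-1)/p$ and $p^*-1=1/(p-1)$ gives the remainder bound $\tfrac{p-1}{p}(\mu_0/(2\sigma))^{1/(p-1)}t_n^\delta\|r_n^\delta\|^r$. Combining with the previous paragraph produces the pointwise inequality $D_{\xi_{n+1}^\delta}\theta(\hat x,x_{n+1}^\delta)-D_{\xi_n^\delta}\theta(\hat x,x_n^\delta)\le -c_0\,t_n^\delta\|r_n^\delta\|^r\le 0$, which is \eqref{monobreg}. This exponent-matching calculation is the one nontrivial obstacle; it is what forces the exact form of $\tilde t_n^\delta$ and $c_0$.

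It remains to justify $x_n^\delta\in B_{2\rho}(x_0)$ and to pass to expectations. I proceed by induction: assuming $x_k^\delta\in B_{2\rho}(x_0)$ for all $k\le n$, the argument above is valid with $\hat x=x^\dag\in B_\rho(x_0)\cap\mathscr{D}(\theta)$ (using \eqref{6xdag}), so telescoping gives $D_{\xi_{n+1}^\delta}\theta(x^\dag,x_{n+1}^\delta)\le D_{\xi_0}\theta(x^\dag,x_0)\le\sigma\rho^p$; $p$-convexity \eqref{theta p-convex} then yields $\|x_{n+1}^\delta-x^\dag\|\le\rho$, and the triangle inequality closes the induction. Finally, conditional on $\mathcal{F}_n$ the iterates $x_n^\delta,\xi_n^\delta$ are deterministic while $i_n$ is uniform on $\{1,\dots,N\}$, so averaging the pointwise descent over $i_n$ produces $\tfrac{1}{N}\sum_{i=1}^N t_n^{\delta}\|F_i(x_n^\delta)-y_i^\delta\|^r$ on the right-hand side (with $t_n^\delta$ interpreted with $i_n=i$); the law of total expectation $\mathbb{E}[\cdot]=\mathbb{E}[\mathbb{E}[\cdot|\mathcal{F}_n]]$ then delivers \eqref{exmonobreg}.
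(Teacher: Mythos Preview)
Your argument is correct and follows essentially the same route as the paper: the conjugate identity \eqref{bregmanconju} combined with the smoothness estimate \eqref{theta * p*-smooth}, the decomposition of $F_{i_n}'(x_n^\delta)(x_n^\delta-\hat x)$ via the tangential cone condition, the exponent-matching bound $(t_n^\delta)^{p^*}A^{p^*}\le\mu_0^{p^*-1}t_n^\delta\|r_n^\delta\|^r$ from the step-size definition, the induction on $x_n^\delta\in B_{2\rho}(x_0)$ using $\hat x=x^\dag$, and the conditional expectation on $\mathcal{F}_n$ all mirror the paper's proof. Your parenthetical remark that $t_n^\delta$ must be read with $i_n=i$ in the averaged sum is exactly how the paper's right-hand side of \eqref{exmonobreg} is to be interpreted.
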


\begin{proof}
We first show that if $x_n^\delta \in {B_{2\rho} }\left( {{x_0}} \right)$ for some $n\ge 0$, then
\begin{equation}\label{bregdes1}
{D_{\xi _{n + 1}^\delta }}\theta \left( {\hat x,x_{n + 1}^\delta } \right) - {D_{\xi _n^\delta }}\theta \left( {\hat x,x_n^\delta } \right) \le  - {c_0}t_n^\delta {\left\| {{F_{{i_n}}}\left( {x_n^\delta } \right) - y_{{i_n}}^\delta } \right\|^r},
\end{equation}
where $\hat x$ denotes any solution of (\ref{nonlinear equation}) in ${B_{2\rho} }\left( {{x_0}} \right)\cap \mathscr{D}\left( \theta  \right)$.
If $\left\| {{F_{{i_n}}}\left( {x_n^\delta } \right) - y_{{i_n}}^\delta } \right\| \le \tau {\delta _{{i_n}}}$, we can derive from (\ref{3choicestep}) that $t_n^\delta=0$ and thus, by the definition of $\xi_{n+1}^\delta$, we have $\xi_{n+1}^\delta=\xi_{n}^\delta$; by using the fact that $x_n^\delta=\nabla {\theta ^*}\left( {\xi _n^\delta } \right)$, we further obtain $x_{n+1}^\delta=x_{n}^\delta$ and subsequently (\ref{bregdes1}) follows. It remains to consider the case $\left\| {{F_{{i_n}}}\left( {x_n^\delta } \right) - y_{{i_n}}^\delta } \right\|> \tau {\delta _{{i_n}}}$. In this case, by using (\ref{bregmanconju}) and $x_n^\delta=\nabla {\theta ^*}\left( {\xi _n^\delta } \right)$, there holds
\[\begin{array}{l}
\quad {D_{\xi _{n + 1}^\delta }}\theta \left( {\hat x,x_{n + 1}^\delta } \right) - {D_{\xi _n^\delta }}\theta \left( {\hat x,x_n^\delta } \right)\\[1.5mm]
 = {\theta ^*}\left( {\xi _{n + 1}^\delta } \right) - {\theta ^*}\left( {\xi _n^\delta } \right) - \left\langle {\xi _{n + 1}^\delta  - \xi _n^\delta ,\hat x} \right\rangle \\[1mm]
 = {\theta ^*}\left( {\xi _{n + 1}^\delta } \right) - {\theta ^*}\left( {\xi _n^\delta } \right) - \left\langle {\xi _{n + 1}^\delta  - \xi _n^\delta ,\nabla {\theta ^*}\left( {\xi _n^\delta } \right)} \right\rangle  + \left\langle {\xi _{n + 1}^\delta  - \xi _n^\delta ,x_n^\delta  - \hat x} \right\rangle.
\end{array}\]
With the help of (\ref{theta * p*-smooth}) and the definition of $\xi_{n+1}^\delta$, we further have
\begin{equation}\label{bregmantui}
\hspace{-3mm}
\begin{array}{*{20}{l}}
\quad {{D_{\xi _{n + 1}^\delta }}\theta \left( {\hat x,x_{n + 1}^\delta } \right) - {D_{\xi _n^\delta }}\theta \left( {\hat x,x_n^\delta } \right)}\\[1mm]
{ \le \frac{1}{{{p^*}{{\left( {2{\sigma}} \right)}^{{p^*} - 1}}}}{{\left\| {\xi _{n + 1}^\delta  - \xi _n^\delta } \right\|}^{{p^*}}} + \left\langle {\xi _{n + 1}^\delta  - \xi _n^\delta ,x_n^\delta  - \hat x} \right\rangle }\\[1mm]
 \le \frac{1}{{{p^*}{{\left( {2{\sigma}} \right)}^{{p^*} - 1}}}}{{{\left\| {\xi _{n + 1}^\delta  - \xi _n^\delta } \right\|}^{{p^*}}}}+ t_n^\delta \left\langle {J_r^{{\mathcal{Y}_{{i_n}}}}\left( {{F_{{i_n}}}\left( {x_n^\delta } \right) - y_{{i_n}}^\delta } \right),{F_{{i_n}}'}\left( {x_n^\delta } \right)\left( {\hat x -x_n^\delta } \right)} \right\rangle .
\end{array}
\end{equation}
Note that $F_{{i_n}}'\left( {x_n^\delta } \right)\left( { \hat x -x_n^\delta} \right) = y_{{i_n}}^\delta -{F_{{i_n}}}\left( {x_n^\delta } \right) - y_{{i_n}}^\delta +y_{i_n}-y_{i_n} +{F_{{i_n}}}\left( {x_n^\delta } \right) + F_{{i_n}}'\left( {x_n^\delta } \right)\left( { \hat x -x_n^\delta} \right)$, utilizing $\left\| {y_{i_n}^\delta  - {y_{i_n}}} \right\| \le {\delta _{i_n}}$ and the property of the duality mapping $J_r^{{\mathcal{Y}_{{i_n}}}}$, we can deduce that
\begin{equation*}
\begin{array}{l}
\quad t_n^\delta \left\langle {J_r^{{\mathcal{Y}_{{i_n}}}}\left( {{F_{{i_n}}}\left( {x_n^\delta } \right) - y_{{i_n}}^\delta } \right),{F_{{i_n}}'}\left( {x_n^\delta } \right)\left( {\hat x - x_n^\delta } \right)} \right\rangle \\[1mm]
 \le t_n^\delta {\left\| {{F_{{i_n}}}\left( {x_n^\delta } \right) - y_{{i_n}}^\delta } \right\|^{r - 1}}\left( \delta_{i_n}+ \left\| {y_{{i_n}}- {F_{{i_n}}}\left( {x_n^\delta } \right) - {F_{{i_n}}'}\left( {x_n^\delta } \right)\left( {\hat x - x_n^\delta } \right)} \right\|\right)\\[1mm]
 ~~- t_n^\delta {\left\| {{F_{{i_n}}}\left( {x_n^\delta } \right) - y_{{i_n}}^\delta } \right\|^r}.
\end{array}
\end{equation*}
Recall that $x_n^\delta \in {B_{2\rho} }\left( {{x_0}} \right)$ and $\hat x$ is a solution of (\ref{nonlinear equation}) in ${B_{2\rho} }\left( {{x_0}} \right)\cap \mathscr{D}\left( \theta  \right)$, then $F_{i_n}\left( {\hat x } \right)=y_{i_n}$ and we may use (\ref{TCC}) in Assumption \ref{5asumption operator} (d) to derive that
\begin{equation*}
\begin{array}{l}
\quad t_n^\delta \left\langle {J_r^{{\mathcal{Y}_{{i_n}}}}\left( {{F_{{i_n}}}\left( {x_n^\delta } \right) - y_{{i_n}}^\delta } \right),{F_{{i_n}}'}\left( {x_n^\delta } \right)\left( {\hat x - x_n^\delta } \right)} \right\rangle \\[1mm]
 \le t_n^\delta {\left\| {{F_{{i_n}}}\left( {x_n^\delta } \right) - y_{{i_n}}^\delta } \right\|^{r - 1}}\left( \delta_{i_n}+ \eta \left\| { {F_{{i_n}}}\left( {x_n^\delta } \right)-y_{{i_n}}} \right\|\right)
 - t_n^\delta {\left\| {{F_{{i_n}}}\left( {x_n^\delta } \right) - y_{{i_n}}^\delta } \right\|^r}\\[1mm]
 \le   t_n^\delta {\left\| {{F_{{i_n}}}\left( {x_n^\delta } \right) - y_{{i_n}}^\delta } \right\|^{r - 1}}\left( {\left( {1 + \eta } \right){\delta _{{i_n}}} + \eta \left\| {{F_{{i_n}}}\left( {x_n^\delta } \right) - y_{{i_n}}^\delta } \right\|} \right)- t_n^\delta {\left\| {{F_{{i_n}}}\left( {x_n^\delta } \right) - y_{{i_n}}^\delta } \right\|^r}.
\end{array}
\end{equation*}
By virtue of the definition of $t_n^\delta$ in (\ref{3choicestep}), it follows that
\[
t_n^\delta \left\| {{F_{{i_n}}}\left( {x_n^\delta } \right) - y_{{i_n}}^\delta } \right\|^{r-1}{\delta _{{i_n}}} \le \frac{1}{\tau }{t_n^\delta}{\left\| {{F_{{i_n}}}\left( {x_n^\delta } \right) - y_{{i_n}}^\delta } \right\|^r}.\]
Inserting this into the previous inequality and combining with (\ref{bregmantui}), we can obtain
\begin{equation}\label{bregdescent}
\begin{array}{l}
\quad {D_{\xi _{n + 1}^\delta }}\theta \left( {\hat x,x_{n + 1}^\delta } \right) - {D_{\xi _n^\delta }}\theta \left( {\hat x,x_n^\delta } \right)\\[1mm]
 \le \frac{1}{{{p^*}{{\left( {2{\sigma}} \right)}^{{p^*} - 1}}}}{{\left\| {\xi _{n + 1}^\delta  - \xi _n^\delta } \right\|}^{{p^*}}} - \left( {1 - \eta  - \frac{{1 + \eta }}{\tau }} \right)t_n^\delta {\left\| {{F_{{i_n}}}\left( {x_n^\delta } \right) - y_{{i_n}}^\delta } \right\|^r}.
\end{array}
\end{equation}
According to the definition of $\xi_{n+1}^\delta$, we have
\[{{\left\| {\xi _{n + 1}^\delta  - \xi _n^\delta } \right\|}^{{p^*}}} ={\left( {t_n^\delta } \right)^{{p^*}}}{\left\| {{F_{{i_n}}'}{{\left( {x_n^\delta } \right)}^*}J_r^{{{\cal Y}_{{i_n}}}}\left( {{F_{{i_n}}}\left( {x_n^\delta } \right) - y_{{i_n}}^\delta } \right)} \right\|^{{p^*}}}.\]
In view of the definition of $t_n^\delta$ in (\ref{3choicestep}), one can see that
\[t_n^\delta  \le \frac{{{\mu _0}{{\left\| {{F_{{i_n}}}\left( {x_n^\delta } \right) - y_{{i_n}}^\delta } \right\|}^{(p - 1)r}}}}{{{{\left\| {{F_{{i_n}}'}{{\left( {x_n^\delta } \right)}^*}J_r^{{\mathcal{Y}_{{i_n}}}}\left( {{F_{{i_n}}}\left( {x_n^\delta } \right) - y_{{i_n}}^\delta } \right)} \right\|}^p}}},\]
from which there follows
\[{\left( {t_n^\delta } \right)^{{p^*} - 1}}{\left\| {{F_{{i_n}}'}{{\left( {x_n^\delta } \right)}^*}J_r^{{\mathcal{Y}_{{i_n}}}}\left( {{F_{{i_n}}}\left( {x_n^\delta } \right) - y_{{i_n}}^\delta } \right)} \right\|^{{p^*}}} \le \mu _0^{{p^*} - 1}{\left\| {{F_{{i_n}}}\left( {x_n^\delta } \right) - y_{{i_n}}^\delta } \right\|^r}.\]
Therefore, we derive the estimate
\[{\left\| {\xi _{n + 1}^\delta  - \xi _n^\delta } \right\|^{{p^*}}} \le t_n^\delta \mu _0^{{p^*} - 1}{\left\| {{F_{{i_n}}}\left( {x_n^\delta } \right) - y_{{i_n}}^\delta } \right\|^r},\]
which together with (\ref{bregdescent}) implies (\ref{bregdes1}).

Next we show that  $x_n^\delta \in {B_{2\rho} }\left( {{x_0}} \right)$ for all $n\ge 0$. From (\ref{6xdag}) and Assumption \ref{5asumption theta}, it follows that $\left\| {{x^\dag } - {x_0}} \right\| \le \rho $, i.e., $x^\dag$ is a solution of (\ref{nonlinear equation}) in ${B_\rho }\left( {{x_0}} \right)\cap \mathscr{D}\left( \theta  \right)$. Then, using (\ref{bregdes1}) with $\hat x=x^\dag$, we can inductively prove that
\[{D_{\xi _n^\delta }}\theta \left( {{x^\dag },x_n^\delta } \right) \le {D_{{\xi _0}}}\theta \left( {{x^\dag },{x_0}} \right) \le \sigma {\rho ^p},\]
which, together with the $p$-convexity of $\theta$, leads to $\left\| {{x^\dag } - x_n^\delta } \right\| \le \rho $ and $\left\| {{x^\dag } - {x_0}} \right\| \le \rho $ and consequently $\left\| {x_n^\delta  - {x_0}} \right\| \le 2\rho $ for all $n\ge 0$. Therefore, $x_n^\delta \in {B_{2\rho} }\left( {{x_0}} \right)$ for $n\ge 0$. Consequently, (\ref{bregdes1}) holds for all $n\ge 0$, which yields the first assertion  (\ref{monobreg}) immediately.
%Further, by using (\ref{xmono}) with $\hat x=x^\dag$, we can get that  $\left\| {x^\dag - x_{n+1}^\delta } \right\| \le\left\| {x^\dag - {x_0}} \right\| \le \rho $, and hence,  $\left\| {x_{n + 1}^\delta  - {x_0}} \right\| \le 2\rho $, i.e, $x_{n+1}^\delta \in {B_{2\rho} }\left( {{x_0}} \right)$.

Denoting by $\mathcal{F}_n$ the $\sigma$-algebra generated by $\left( {{i_0}, \ldots ,{i_{n - 1}}} \right)$, by taking expectation of (\ref{bregdes1}) conditioned on $\mathcal{F}_n$ and utilizing the measurability of $x_{n}^\delta$ with respect to $\mathcal{F}_n$, we have
\[\begin{array}{l}
\mathbb{E}\left[ {{D_{\xi _{n + 1}^\delta }}\theta \left( {\hat x,x_{n + 1}^\delta } \right) - {D_{\xi _n^\delta }}\theta \left( {\hat x,x_n^\delta } \right)|{\mathcal{F}_n}} \right] \le  - {c_0}\mathbb{E}\left[ {t_n^\delta {{\left\| {{F_{{i_n}}}\left( {x_n^\delta } \right) - y_{{i_n}}^\delta } \right\|}^r}|{\mathcal{F}_n}} \right]\\[1mm]
~~~~~~~~~~~~~~~~~~~~~~~~~~~~~~~~~~~~~~~~~~~~~~~~~~~~ =  - \frac{{{c_0}}}{N}\sum\limits_{i = 1}^N {t_n^\delta {{\left\| {{F_i}\left( {x_n^\delta } \right) - y_i^\delta } \right\|}^r}}.
\end{array}\]
Finally, taking the full expectation and making use of the linearity of the expectation yields the second assertion (\ref{exmonobreg}).
\end{proof}
\begin{remark}\label{stepsize}
In the proof of Lemma \ref{smonotoniciy}, the parameters $\mu_0>0$ and $\tau>1$ of the step size (\ref{3choicestep}) need to satisfy (\ref{c1}). The parameter $\mu_1>0$ can be selected to be any positive number; in fact, $\mu_1$ represents an upper bound of $\tilde t_n^\delta$, which is useful for the following theoretical analysis. To allow large step size, one can take $\mu_1$ to be a large number in the numerical simulations.

%We notice that in view of \cref{c1}, \cref{smonotoniciy} needs $\tau$ to be relatively large or $\mu_0$ to be relatively small, which might lead to loss of accuracy or slow convergence of the SGD-$\theta$ method. However, the numerical simulations in \cref{3numbericalex} show that even if \cref{c1} is not satisfied, our SGD-$\theta$ method still exhibits the effect of reducing errors and convergence as the iteration proceeds.
\end{remark}
\subsection{Convergence analysis for exact data}
To establish the regularization property of the SGD-$\theta$ method, in this subsection we investigate its counterpart in the noise-free case and give the corresponding convergence result. For given initial guesses $x_0 \in \mathscr{D}\left( {\partial \theta } \right)$ and $\xi _0  \in \partial \theta\left( {x_0} \right)$, by removing the subscript $\delta$ in all the quantities involved in (\ref{sgd}), the SGD-$\theta$ method with exact data takes the following form
\begin{equation}\label{sgdex}
\begin{array}{l}
{\xi _{n + 1}} = {\xi _n} - {t_n}{F_{{i_n}}'}{\left( {{x_n}} \right)^*}J_r^{\mathcal{Y}_{i_n}}\left( {{F_{{i_n}}}\left( {{x_n}} \right) - {y_{{i_n}}}} \right),\\[1mm]
{x_{n + 1}} = \arg \mathop {\min }\limits_{x \in \mathcal{X}} \left\{ {\theta \left( x \right) - \left\langle {{\xi _{n + 1}},x} \right\rangle } \right\},
\end{array}
\end{equation}
where the random index ${i_n}$ is chosen uniformly from the set $ \left\{ {1,2, \cdots ,N} \right\}$ and
\begin{equation}\label{eqs35}
{t_n} = \left\{ {\begin{array}{*{20}{l}}
{{{\tilde t}_n}{{\left\| {{F_{{i_n}}}\left( {{x_n}} \right) - {y_{{i_n}}}} \right\|}^{p - r}},~~~~{F_{{i_n}}}\left( {{x_n}} \right) \ne {y_{{i_n}}},}\\[1mm]
{0,~~~~~~~~~~~~~~~~~~~~~~~~~~~~~~~otherwise,}
\end{array}} \right.
\end{equation}
with
\[{{\tilde t}_n} = \min \left\{ {\frac{{{\mu _0}{{\left\| {{F_{{i_n}}}\left( {{x_n}} \right) - {y_{{i_n}}}} \right\|}^{p(r - 1)}}}}{{{{\left\| {{F_{{i_n}}'}{{\left( {{x_n}} \right)}^*}J_r^{{{\cal Y}_{{i_n}}}}\left( {{F_{{i_n}}}\left( {{x_n}} \right) - {y_{{i_n}}}} \right)} \right\|}^p}}},{\mu _1}} \right\}\]
%%%%%%%%%%%%%%%%%%%%%%%%%%%%%%%%%%%%%%%%%%%%%%%%%%%%%%%%%%%%%%%%%%%%%%%%%%%%%%%%%%%%%%%%%%%%%%%%%%%%%%%%%%%%%%%%%55
%%%%%%%%%%%%%%%%%%%%%%%%%%%%%%%%%%%%%%%%%%%%%%%%%%%%%%%%%%%%%%%%%%%%%%%%%%%%%%%%%%%%%%%%%%%%%%%%%%%%%%%%%%%%%%%%%%%%
for some positive constants $\mu_0$ and $\mu_1$.
\begin{lemma}\label{descentexact}
Let Assumptions \ref{5asumption theta} and \ref{5asumption operator} hold. Assume that $\mu_0>0$ is chosen such that
\[{c_1}: = 1 - \eta  - \frac{{p - 1}}{p}{\left( {\frac{{{\mu _0}}}{{2\sigma }}} \right)^{\frac{1}{{p - 1}}}} > 0.\]
Then, $x_n \in {B_{2\rho} }\left( {{x_0}} \right)$ for all $n$ and for any solution $\hat x$ of (\ref{nonlinear equation}) in $ {B_{2\rho} }\left( {{x_0}} \right)\cap \mathscr{D}\left( \theta  \right)$, there hold
\[\begin{array}{l}
{D_{{\xi _{n + 1}}}}\theta \left( {\hat x,{x_{n + 1}}} \right) \le {D_{{\xi _n}}}\theta \left( {\hat x,{x_n}} \right)
\end{array}
\]
and
\begin{equation}\label{monoex}
\hspace{-2mm}
\begin{array}{l}
\quad \mathbb{E}\left[ {{D_{{\xi _{n + 1}}}}\theta \left( {\hat x,{x_{n + 1}}} \right)} \right] - \mathbb{E}\left[ {{D_{{\xi _n}}}\theta \left( {\hat x,{x_n}} \right)} \right]
\le - {{c_1}{\mu _2}}\mathbb{E}\left[\frac{1}{N} {\sum\limits_{i = 1}^N {{{\left\| {{F_i}\left( {{x_n}} \right) - {y_i}} \right\|}^p}} } \right]
\end{array}
\end{equation}
for all $n\ge 0$, where ${\mu _2}:=  \min \left\{ {{\mu _0}B_0^{ - p},{\mu _1}} \right\}$.
\end{lemma}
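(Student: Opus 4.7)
The plan is to mirror the proof of Lemma~\ref{smonotoniciy}, specializing to $\delta_i = 0$ and $y_i^\delta = y_i$, and then to exploit the uniform bound $\|F_i'(x)\| \le B_0$ to convert the descent bound from an $r$-power of the residual into the $p$-power appearing on the right-hand side of~(\ref{monoex}).

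\textbf{Step 1 (deterministic descent).} Assume inductively that $x_n \in B_{2\rho}(x_0)$. If $F_{i_n}(x_n) = y_{i_n}$, then $t_n = 0$ by~(\ref{eqs35}), so $\xi_{n+1} = \xi_n$, and from $x_n = \nabla \theta^*(\xi_n)$ we get $x_{n+1} = x_n$, trivially giving the descent inequality. Otherwise, I would use~(\ref{bregmanconju}) and the identity $x_n = \nabla\theta^*(\xi_n)$ to rewrite
\[
D_{\xi_{n+1}}\theta(\hat x,x_{n+1}) - D_{\xi_n}\theta(\hat x,x_n) = \theta^*(\xi_{n+1}) - \theta^*(\xi_n) - \langle \xi_{n+1}-\xi_n, \nabla\theta^*(\xi_n)\rangle + \langle \xi_{n+1}-\xi_n, x_n-\hat x\rangle,
\]
apply the $p^*$-smoothness estimate~(\ref{theta * p*-smooth}) to the first three terms, and expand the last one through the definition of $\xi_{n+1}$. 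Since $\hat x$ solves $F_{i_n}(\hat x) = y_{i_n}$ and $x_n,\hat x \in B_{2\rho}(x_0)$, the tangential cone condition~(\ref{TCC}) (without any $\delta_{i_n}$ term) yields
\[
\bigl\langle J_r^{\mathcal{Y}_{i_n}}(F_{i_n}(x_n) - y_{i_n}), F_{i_n}'(x_n)(\hat x - x_n)\bigr\rangle \le (\eta - 1)\|F_{i_n}(x_n) - y_{i_n}\|^r.
\]
Combining this with the bound $\|\xi_{n+1} - \xi_n\|^{p^*} \le t_n \mu_0^{p^*-1}\|F_{i_n}(x_n) - y_{i_n}\|^r$ obtained from the first branch of $\tilde t_n$ (exactly as in the proof of Lemma~\ref{smonotoniciy}) produces the descent inequality
\[
D_{\xi_{n+1}}\theta(\hat x,x_{n+1}) - D_{\xi_n}\theta(\hat x,x_n) \le -c_1\, t_n\, \|F_{i_n}(x_n) - y_{i_n}\|^r.
\]

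\textbf{Step 2 (invariance of the ball and first conclusion).} Applying this inequality with $\hat x = x^\dagger$ (which lies in $B_\rho(x_0)\cap\mathscr{D}(\theta)$ by~(\ref{6xdag})) and iterating gives $D_{\xi_n}\theta(x^\dagger, x_n) \le D_{\xi_0}\theta(x^\dagger, x_0) \le \sigma\rho^p$; then $p$-convexity forces $\|x_n - x^\dagger\| \le \rho$, hence $\|x_n - x_0\| \le 2\rho$, closing the induction. Since the descent inequality then holds for every $n$ and any solution $\hat x$ in $B_{2\rho}(x_0)\cap\mathscr{D}(\theta)$, the monotonicity claim follows immediately.

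\textbf{Step 3 (from $r$-power to $p$-power and expectation).} The new ingredient compared to Lemma~\ref{smonotoniciy} is the lower bound on $\tilde t_n$. Using Assumption~\ref{5asumption operator}(d) and the duality mapping identity $\|J_r^{\mathcal{Y}_{i_n}}(v)\| = \|v\|^{r-1}$, I would estimate
\[
\bigl\|F_{i_n}'(x_n)^* J_r^{\mathcal{Y}_{i_n}}(F_{i_n}(x_n) - y_{i_n})\bigr\|^p \le B_0^p \|F_{i_n}(x_n) - y_{i_n}\|^{p(r-1)},
\]
which shows that the first argument of the min defining $\tilde t_n$ is at least $\mu_0 B_0^{-p}$, so $\tilde t_n \ge \mu_2 = \min\{\mu_0 B_0^{-p},\mu_1\}$ whenever $F_{i_n}(x_n) \ne y_{i_n}$. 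Multiplying by $\|F_{i_n}(x_n) - y_{i_n}\|^{p-r}$ then gives $t_n\|F_{i_n}(x_n) - y_{i_n}\|^r \ge \mu_2 \|F_{i_n}(x_n) - y_{i_n}\|^p$ (trivially true also when the residual vanishes). Inserting this into the descent inequality, taking the conditional expectation with respect to $\mathcal{F}_n = \sigma(i_0,\dots,i_{n-1})$, using the $\mathcal{F}_n$-measurability of $x_n$ and the uniform distribution of $i_n$, and finally taking the total expectation by the tower rule yields~(\ref{monoex}). The only mildly delicate point is verifying that the replacement of $\tilde t_n$ by its lower bound $\mu_2$ remains valid in the degenerate branch $F_{i_n}(x_n) = y_{i_n}$; this is immediate since both sides of the passage from $r$-power to $p$-power are zero.
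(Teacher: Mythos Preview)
Your proposal is correct and follows essentially the same route as the paper: specialize the proof of Lemma~\ref{smonotoniciy} to exact data to obtain the pathwise descent inequality with constant $c_1$, then use $\|F_i'(x)\|\le B_0$ together with $\|J_r^{\mathcal{Y}_{i_n}}(v)\|=\|v\|^{r-1}$ to bound $\tilde t_n\ge\mu_2$ and convert the $r$-power residual into the $p$-power, finally passing to conditional and then full expectation. The handling of the degenerate case and the induction on $x_n\in B_{2\rho}(x_0)$ match the paper's argument as well.
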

\begin{proof}
By the argument analogous to the proof of Lemma \ref{smonotoniciy}, we can prove that  $x_n \in {B_{2\rho} }\left( {{x_0}} \right)$ for all $n$ and for any solution $\hat x$ of (\ref{nonlinear equation}) in $ {B_{2\rho} }\left( {{x_0}} \right)\cap \mathscr{D}\left( \theta  \right)$,
\begin{equation}\label{desexdirect}
\begin{array}{l}
{D_{{\xi _{n + 1}}}}\theta \left( {\hat x,{x_{n + 1}}} \right) - {D_{{\xi _n}}}\theta \left( {\hat x,{x_n}} \right) \le  - {c_1}{t_n}{\left\| {{F_{{i_n}}}\left( {{x_n}} \right) - {y_{{i_n}}}} \right\|^r}.
\end{array}
\end{equation}
Next we show the assertion (\ref{monoex}). If  ${F_{{i_n}}}\left( {{x_n}} \right) \ne {y_{{i_n}}}$, then by using  (\ref{eqs35}),  $\left\| {F_{i}'\left( x \right)} \right\| \le {B_0}$ for all $x\in{B_{2\rho }}\left( {{x_0}} \right)$ and $i=1,2,\ldots,N$ from Assumption \ref{5asumption operator} (d), and the property of $J_r^{\mathcal{Y}_{i_n}}$,  we have
\[\begin{array}{l}
{t_n} = \min \left\{ {\frac{{{\mu _0}{{\left\| {{F_{{i_n}}}\left( {{x_n}} \right) - {y_{{i_n}}}} \right\|}^{p(r - 1)}}}}{{{{\left\| {{F_{{i_n} }^\prime}{{\left( {{x_n}} \right)}^*}J_r^{{{\cal Y}_{{i_n}}}}\left( {{F_{{i_n}}}\left( {{x_n}} \right) - {y_{{i_n}}}} \right)} \right\|}^p}}},{\mu _1}} \right\}{\left\| {{F_{{i_n}}}\left( {{x_n}} \right) - {y_{{i_n}}}} \right\|^{p - r}}\\[1mm]
 ~~~\ge \min \left\{ {{\mu _0}B_0^{ - p},{\mu _1}} \right\}{\left\| {{F_{{i_n}}}\left( {{x_n}} \right) - {y_{{i_n}}}} \right\|^{p - r}}= {\mu _2}{\left\| {{F_{{i_n}}}\left( {{x_n}} \right) - {y_{{i_n}}}} \right\|^{p-r}}.
\end{array}\]
This together with  (\ref{desexdirect}) leads  to
\begin{equation}\label{desexdirect1}
\begin{array}{l}
{D_{{\xi _{n + 1}}}}\theta \left( {\hat x,{x_{n + 1}}} \right) - {D_{{\xi _n}}}\theta \left( {\hat x,{x_n}} \right)\le  - {c_1}{\mu _2}{\left\| {{F_{{i_n}}}\left( {{x_n}} \right) - {y_{{i_n}}}} \right\|^p},
\end{array}
\end{equation}
which holds automatically when ${F_{{i_n}}}\left( {{x_n}} \right) = {y_{{i_n}}}$, since (\ref{eqs35}) forces $t_n=0$ and subsequently $\xi_{n+1}=\xi_n$ and $x_{n+1}=x_n$.

By taking full expectation of (\ref{desexdirect1}), it follows that
\begin{equation*}
\begin{array}{l}
\quad \mathbb{E}\left[ {{D_{{\xi _{n + 1}}}}\theta \left( {\hat x,{x_{n + 1}}} \right)} \right] - \mathbb{E}\left[ {{D_{{\xi _n}}}\theta \left( {\hat x,{x_n}} \right)} \right]\\[1mm]
\le  - {c_1}{\mu _2}\mathbb{E}\left[ {{{\left\| {{F_{{i_n}}}\left( {{x_n}} \right) - {y_{{i_n}}}} \right\|}^p}} \right]=  - {c_1}{\mu _2}\mathbb{E}\left[ {\mathbb{E}\left[ {{{\left\| {{F_{{i_n}}}\left( {{x_n}} \right) - {y_{{i_n}}}} \right\|}^p}|{{\cal F}_n}} \right]} \right]\\[1mm]
=  - {{c_1}{\mu _2}}\mathbb{E}\left[\frac{1}{N} {\sum\limits_{i = 1}^N {{{\left\| {{F_i}\left( {{x_n}} \right) - {y_i}} \right\|}^p}} } \right],
\end{array}
\end{equation*}
which gives the assertion (\ref{monoex}). This completes the proof.
\end{proof}
Now we are ready to show the convergence in expectation of the  SGD-$\theta$ method (\ref{sgdex}) with exact data.
%%%%%%%%%%%%%%%%%%%%%%%%%%%lemma%%%%%%%%%%%%%%%%%%%%%%%%%%%%%%%%%%%%%%%%%%%%%%%%%%%%%%%%%%%%%%%%%%%%%%%%%%%%%%%%%
%%%%%%%%%%%%%%%%%%%%%%%%%%%%%%%%%%%%%%%%%%%%%%%%%%%%%%%%%%%%%%%%%%%%%%%%%%%%%%%%%%%%%%%%%%%%%%%%%%%%%%%%%%%%%%%%%%
\begin{theorem}\label{sconvergence}
Let all conditions in Lemma \ref{descentexact}  hold.
Then, there exists a solution $x^* \in {B_{2\rho} }\left( {{x_0}} \right)\cap \mathscr{D}\left( \theta  \right)$ of (\ref{nonlinear equation}) such that
\[\mathbb{E}\left[ {{{\left\| {{x^*} - {x_n}} \right\|}^p}} \right] \to 0 ~~\text{and}~~\mathbb{E}\left[ {{D_{{\xi _n}}}\theta \left( {{x^*},{x_n}} \right)} \right] \to 0\]
as $n\to \infty$. If $\mathcal{N}\left( {F_i'\left( {{x^\dag }} \right)} \right) \subset \mathcal{N}\left( {F_i'\left( x \right)} \right)$ for all $x \in {B_{2\rho }}\left( {{x_0}} \right)$ and $i=1,2,\dots,N$, then $x^*=x^\dag$.
\end{theorem}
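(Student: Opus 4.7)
The plan is to mimic the convergence analysis for deterministic Landweber--Kaczmarz-type methods in Banach spaces (cf.\ \cite{JinWang2013}), adapted to the stochastic setting through the path-wise monotonicity of Lemma~\ref{descentexact} and by passing to expectations at appropriate places. First, I would fix any solution $\hat x\in B_{2\rho}(x_0)\cap\mathscr{D}(\theta)$ and telescope the descent inequality~(\ref{monoex}) over $n=0,\ldots,M$, then let $M\to\infty$, to obtain
\begin{equation*}
c_1\mu_2\sum_{n=0}^{\infty}\mathbb{E}\!\left[\frac{1}{N}\sum_{i=1}^{N}\|F_i(x_n)-y_i\|^{p}\right] \le D_{\xi_0}\theta(\hat x, x_0).
\end{equation*}
Summability immediately forces $\mathbb{E}[\|F_i(x_n)-y_i\|^{p}]\to 0$ for every $i$, so one can extract a subsequence $\{n_k\}$ along which $\sum_{i}\|F_i(x_{n_k})-y_i\|\to 0$ almost surely.

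Next, by path-wise monotonicity the orbit $\{x_n(\omega)\}$ lies in $B_{2\rho}(x_0)$ for a.e.\ $\omega$, so reflexivity of $\mathcal{X}$ yields a further sub-subsequence $\{n_{k_j}\}$ with $x_{n_{k_j}}(\omega)\rightharpoonup x^*(\omega)$. Weak closedness (Assumption~\ref{5asumption operator}(c)) makes $x^*(\omega)$ a solution of~(\ref{nonlinear equation}), and lower semi-continuity of $\theta$ places it in $\mathscr{D}(\theta)$. To promote weak subsequential convergence into strong $L^{p}$ convergence of the full sequence, I would take $\hat x=x^*(\omega)$ in Lemma~\ref{descentexact}, so that $\{D_{\xi_n}\theta(x^*,x_n)\}$ is path-wise non-increasing; writing
\begin{equation*}
D_{\xi_n}\theta(x^*,x_n)=\theta(x^*)-\theta(x_n)-\langle\xi_n-\xi_0,x^*-x_n\rangle-\langle\xi_0,x^*-x_n\rangle
\end{equation*}
and bounding the first inner product by expanding $\xi_n-\xi_0$ as a sum of $t_k F'_{i_k}(x_k)^{*}J_r^{\mathcal{Y}_{i_k}}(F_{i_k}(x_k)-y_{i_k})$ and invoking~(\ref{TCC}) together with summability of the residuals, one shows this vanishes along the sub-subsequence, while weak convergence kills the second inner product. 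Combined with lower semi-continuity of $\theta$ (to identify $\theta(x_{n_{k_j}})\to\theta(x^{*})$), this yields $D_{\xi_{n_{k_j}}}\theta(x^*,x_{n_{k_j}})\to 0$; path-wise monotonicity propagates this to the entire sequence, and $p$-convexity gives $\|x_n-x^*\|\to 0$. Because $\|x_n-x^*\|^{p}\le D_{\xi_0}\theta(\hat x,x_0)/\sigma$ uniformly, dominated convergence delivers the announced $\mathbb{E}[\|x^*-x_n\|^{p}]\to 0$ and $\mathbb{E}[D_{\xi_n}\theta(x^*,x_n)]\to 0$.

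For the final assertion, the null-space inclusion $\mathcal{N}(F_i'(x^\dag))\subset\mathcal{N}(F_i'(x))$ implies each increment $\xi_{n+1}-\xi_n\in\mathcal{R}(F'_{i_n}(x_n)^{*})\subset\mathcal{N}(F'_{i_n}(x^\dag))^{\perp}$; hence $\xi_n-\xi_0$ remains in the closed subspace $V:=\overline{\sum_i\mathcal{R}(F_i'(x^\dag)^{*})}$, and passing to the limit exhibits some $\xi^*\in\partial\theta(x^*)$ with $\xi^*-\xi_0\in V$. Since $x^\dag$ is characterised in~\cite[Lemma~3.2]{JinWang2013} as the unique solution admitting such a $V$-compatible subgradient, we conclude $x^*=x^\dag$. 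The main technical obstacle is the cross-term estimate above: the inner products $t_k\langle J_r^{\mathcal{Y}_{i_k}}(F_{i_k}(x_k)-y_{i_k}),\,F'_{i_k}(x_k)(x^*-x_n)\rangle$ mix random indices from different iterations, so controlling them uniformly in $n$ requires the tower property of expectation, the bound $\tilde t_n\le\mu_1$, and~(\ref{TCC}) to absorb the nonlinear remainder in a summable fashion.
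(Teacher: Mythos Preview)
Your outline follows a different route than the paper, and the difference matters at precisely the point you flag as ``the main technical obstacle''.

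The paper does \emph{not} work path-wise with weak compactness. Instead it argues entirely in $L^p(\Omega,\mathcal{X})$: from~(\ref{monoex}) it defines the deterministic sequence $R_n=\mathbb{E}\bigl[\tfrac{1}{N}\sum_i\|F_i(x_n)-y_i\|^p\bigr]\to0$ and picks a \emph{deterministic} subsequence $\{n_l\}$ with the minimal-residual property $R_{n_l}\le R_n$ for all $n\le n_l$. It then shows $\{x_{n_l}\}$ is Cauchy in $L^p(\Omega,\mathcal{X})$ by estimating $\mathbb{E}\bigl[\langle\xi_{n_l}-\xi_{n_k},x_{n_l}-\hat x\rangle\bigr]$; after splitting via~(\ref{TCC}) one is left with terms of the form
\[
\sum_{n=n_k}^{n_l-1}\bigl(\mathbb{E}[\|F_{i_n}(x_n)-y_{i_n}\|^p]\bigr)^{(p-1)/p}\bigl(\mathbb{E}[\|F_{i_n}(x_{n_l})-y_{i_n}\|^p]\bigr)^{1/p}.
\]
The second factor is bounded by $N^{1/p}R_{n_l}^{1/p}$, and the special subsequence lets one absorb this as $R_{n_l}^{1/p}\le R_n^{1/p}$, making the whole sum $\le C\sum_n R_n$, which telescopes against~(\ref{monoex}). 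Strong $L^p(\Omega,\mathcal{X})$-convergence then yields a measurable limit $x^*$ with an a.s.\ convergent sub-subsequence, and the remaining steps (showing $x^*\in\mathscr{D}(\theta)$ and $\mathbb{E}[\langle\xi_{n_l},x_{n_l}-x^*\rangle]\to0$) are carried out in expectation.

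Your cross-term estimate fails without this subsequence choice. Path-wise you need
\[
\sum_{k=0}^{n-1}\|F_{i_k}(x_k)-y_{i_k}\|^{p-1}\|F_{i_k}(x_n)-y_{i_k}\|\to0
\]
along your sub-subsequence, but H\"older only gives the bound $(\sum_k a_k^p)^{(p-1)/p}(\sum_{k<n}\|F_{i_k}(x_n)-y_{i_k}\|^p)^{1/p}$, and the second factor involves $n$ terms each of size $\max_i\|F_i(x_n)-y_i\|$; there is no reason for $n^{1/p}\max_i\|F_i(x_n)-y_i\|\to0$ along a subsequence chosen merely to make the residuals vanish. The paper's device of comparing $R_{n_l}$ to $R_n$ is exactly what closes this gap. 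A second, more structural issue is that your weak limit $x^*(\omega)$ is obtained along an $\omega$-dependent sub-subsequence, so its measurability is not established; without it you cannot invoke dominated convergence at the end. The paper sidesteps this by getting strong $L^p(\Omega,\mathcal{X})$-convergence first and only then passing to an a.s.\ sub-subsequence.
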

\begin{proof}
We first show that the sequence $\left\{ {{x_n}} \right\}$ has a convergent subsequence. To this end, we consider
\[R_n:=\mathbb{E}\left[ \frac{1}{N}{\sum\limits_{i = 1}^N {{{\left\| {{F_i}\left( {{x_n}} \right) - {y_i}} \right\|}^p}} } \right].\]
From (\ref{monoex}) in Lemma \ref{descentexact}, it follows that $\left\{ \mathbb{E}\left[ {{D_{{\xi _n}}}\theta \left( {\hat x,{x_n}} \right)} \right]\right\}$ is  monotonically decreasing and thus
\begin{equation}\label{emxiu}
\mathop {\lim }\limits_{n \to \infty } \mathbb{E}\left[ {{D_{{\xi _n}}}\theta \left( {\hat x,{x_n}} \right)} \right] = \varepsilon
\end{equation}
for some $\varepsilon\ge 0$. Rearranging the terms in (\ref{monoex}), we further have
\[\hspace{-5mm}
\begin{array}{*{20}{l}}
{{c_1}{\mu _2}{R_n} = {c_1}{\mu _2}\mathbb{E}\left[ {\frac{1}{N}\sum\limits_{i = 1}^N {{{\left\| {{F_i}\left( {{x_n}} \right) - {y_i}} \right\|}^p}} } \right] \le \mathbb{E}\left[ {{D_{{\xi _n}}}\theta \left( {\hat x,{x_n}} \right)} \right]}-\mathbb{E}\left[ {{D_{{\xi _{n + 1}}}}\theta \left( {\hat x,{x_{n + 1}}} \right)} \right],\end{array}\]
which, together with (\ref{emxiu}), implies that
 \begin{equation}\label{emxiu1}
\mathop {\lim }\limits_{n \to \infty } R_n= 0.
\end{equation}
If there is an integer $n$ such that $R_n= 0$, we then have
\[\sum\limits_{i = 1}^N {{{\left\| {{F_i}\left( {{x_n}} \right) - {y_i}} \right\|}^p}}  = 0
\]
along each sample path $\left( {{i_0}, \cdots ,{i_{n - 1}}} \right)$ since there are only a finite number of sample paths and the probability of each path is positive; consequently $\xi_m=\xi_n$ and $x_m=x_n$ and thus $R_m=0$ for $m \ge n$. Based on these facts, we can choose a strictly increasing subsequence $\left\{ {{n_l}} \right\}$ of integers by setting $n_0=0$ and, for each $l\ge 1$, letting $n_l$ be the the first integer such that
\[{n_l} \ge {n_{l - 1}} + 1~~\text{and}~~{R_{{n_l}}} \le {R_{{n_{l - 1}}}}.\]
For such sequence, it is easy to show that
\begin{equation}\label{nlxiaoyun}
{R_{{n_l}}} \le {R_n},~~\forall 0 \le n \le {n_l}.
\end{equation}

With above chosen $\left\{ {{n_l}} \right\}$, we next prove that the subsequence $\left\{ {{x_{n_l}}} \right\} $ is a Cauchy sequence in $ {L^p}\left( {\Omega ,\mathcal{X}} \right)$ (${L^p}\left( {\Omega ,\mathcal{X}} \right)$, the space containing all $\mathcal{X}$-valued random variables $x$ such that $\mathbb{E}\left[ {{{\left\| x \right\|}^p}} \right]<\infty$, is a Banach space with respect to the norm
${\left( {\mathbb{E}\left[ {{{\left\| x \right\|}^p}} \right]} \right)^{\frac{1}{p}}}$).
%$^{\footnotemark[2]}$.
%\footnotetext[2]{Given a Banach space $\mathcal{X}$, ${L^p}\left( {\Omega ,\mathcal{X}} \right)$ is the space containing all $\mathcal{X}$-valued random variables $x$ such that $\mathbb{E}\left[ {{{\left\| x \right\|}^p}} \right]<\infty$; this is a Banach space under the norm
%${\left( {\mathbb{E}\left[ {{{\left\| x \right\|}^p}} \right]} \right)^{\frac{1}{p}}}.$}
For this purpose, we first show that

 \begin{equation}\label{edxnlk}
 \mathop {\sup }\limits_{l \ge k} \mathbb{E}\left[ {{D_{{\xi _{{n_k}}}}}\theta \left( {{x_{{n_l}}},{x_{{n_k}}}} \right)} \right] \to 0~~\text{as}~~k \to \infty.
 \end{equation}
 In view of (\ref{bregman distance}), we have for any $l>k$ that
 \[{D_{{\xi _{{n_k}}}}}\theta \left( {{x_{{n_l}}},{x_{{n_k}}}} \right) = {D_{{\xi _{{n_k}}}}}\theta \left( {\hat x,{x_{{n_k}}}} \right) - {D_{{\xi _{{n_l}}}}}\theta \left( {\hat x,{x_{{n_l}}}} \right) + \left\langle {{\xi _{{n_l}}} - {\xi _{{n_k}}},{x_{{n_l}}} - \hat x} \right\rangle. \]
 By taking the expectation, we further arrive at
\begin{equation}\label{xnlk}
\begin{array}{l}
\mathbb{E}\left[ {{D_{{\xi _{{n_k}}}}}\theta \left( {{x_{{n_l}}},{x_{{n_k}}}} \right)} \right] = \mathbb{E}\left[ {{D_{{\xi _{{n_k}}}}}\theta \left( {\hat x,{x_{{n_k}}}} \right)} \right] - \mathbb{E}\left[ {{D_{{\xi _{{n_l}}}}}\theta \left( {\hat x,{x_{{n_l}}}} \right)} \right]\\[1mm]
~~~~~~~~~~~~~~~~~~~~~~~~~~~~~ + \mathbb{E}\left[ {\left\langle {{\xi _{{n_l}}} - {\xi _{{n_k}}},{x_{{n_l}}} - \hat x} \right\rangle } \right].
 \end{array}
\end{equation}
From the definition of $\xi_{n+1}$, it follows that
\[\begin{array}{l}
\left\langle {{\xi _{{n_l}}} - {\xi _{{n_k}}},{x_{{n_l}}} - \hat x} \right\rangle  = \sum\limits_{n = {n_k}}^{{n_l} - 1} {\left\langle {{\xi _{n + 1}} - {\xi _n},{x_{{n_l}}} - \hat x} \right\rangle } \\[1mm]
~~~~~~~~~~~~~~~~~~~~~~~~~=  - \sum\limits_{n = {n_k}}^{{n_l} - 1} {{t_n}\left\langle {J_r^{{{\cal Y}_{{i_n}}}}\left( {{F_{{i_n}}}\left( {{x_n}} \right) - {y_{{i_n}}}} \right),{F_{{i_n}}'}\left( {{x_n}} \right)\left( {{x_{{n_l}}} - \hat x} \right)} \right\rangle }.
\end{array}\]
By taking the expectation and utilizing the triangle inequality, we get that
\[\begin{array}{*{20}{l}}
{\quad \left| \mathbb{E}{\left[ {\left\langle {{\xi _{{n_l}}} - {\xi _{{n_k}}},{x_{{n_l}}} - \hat x} \right\rangle } \right]} \right|}\\[1mm]
%{ \le \sum\limits_{n = {n_k}}^{{n_l} - 1} {\left| \mathbb{E}{\left[ {\left\langle {{t_n}J_r^{{{\cal Y}_{{i_n}}}}\left( {{F_{{i_n}}}\left( {{x_n}} \right) - {y_{{i_n}}}} \right),{F_{{i_n}}'}\left( {{x_n}} \right)\left( {{x_{{n_l}}} - \hat x} \right)} \right\rangle } \right]} \right|} }\\[1mm]
{ \le \sum\limits_{n = {n_k}}^{{n_l} - 1} {\left| \mathbb{E}{\left[ {\left\langle {{t_n}J_r^{{{\cal Y}_{{i_n}}}}\left( {{F_{{i_n}}}\left( {{x_n}} \right) - {y_{{i_n}}}} \right),{F_{{i_n}}'}\left( {{x_n}} \right)\left( {{x_{{n_l}}} -{x_n}+{x_n} -\hat x} \right)} \right\rangle } \right]} \right|} }\\[1mm]
{ \le \sum\limits_{n = {n_k}}^{{n_l} - 1} {\left| \mathbb{E}{\left[ {\left\langle {{t_n}J_r^{{{\cal Y}_{{i_n}}}}\left( {{F_{{i_n}}}\left( {{x_n}} \right) - {y_{{i_n}}}} \right),{F_{{i_n}}'}\left( {{x_n}} \right)\left( {{x_n} -\hat x} \right)} \right\rangle } \right]} \right|}} \\[1.5mm]
{~~+\sum\limits_{n = {n_k}}^{{n_l} - 1} {\left| \mathbb{E}{\left[ {\left\langle {{t_n}J_r^{{{\cal Y}_{{i_n}}}}\left( {{F_{{i_n}}}\left( {{x_n}} \right) - {y_{{i_n}}}} \right),{F_{{i_n}}'}\left( {{x_n}} \right)\left( {{x_{{n_l}}} -{x_n}} \right)} \right\rangle } \right]} \right|}: = I + II.}
\end{array}\]
 Next we estimate the two terms $I$ and $II$. By using the H$\ddot{o}$lder inequality
 \begin{equation}\label{holder}
 \left| {\mathbb{E}\left[ {\left\langle {\xi ,\eta } \right\rangle } \right]} \right| \le {\left( {\mathbb{E}\left[ {{{\left\| \xi  \right\|}^p}} \right]} \right)^{{1 \mathord{\left/
 {\vphantom {1 p}} \right.
 \kern-\nulldelimiterspace} p}}}{\left( {\mathbb{E}\left[ {{{\left\| \eta  \right\|}^{{p^*}}}} \right]} \right)^{{1 \mathord{\left/
 {\vphantom {1 {{p^*}}}} \right.
 \kern-\nulldelimiterspace} {{p^*}}}}}, \frac{1}{p} + \frac{1}{{{p^*}}} = 1
 \end{equation}
with $\xi={F_{{i_n}}'}\left( {{x_n}} \right)\left( {{x_n} -\hat x} \right)$ and $\eta={t_n}J_r^{{{\cal Y}_{{i_n}}}}\left( {{F_{{i_n}}}\left( {{x_n}} \right) - {y_{{i_n}}}} \right)$, and  together with the definition of $t_n$ ($t_n\le \mu_1{{\left\| {{F_{{i_n}}}\left( {{x_n}} \right) - {y_{{i_n}}}} \right\|}^{p - r}}$) and the property of $J_r^{{\mathcal{Y}_{{i_n}}}}$, the term $I$ can be estimated as
\begin{equation}\label{esimateI}
\hspace{-10mm}
\begin{array}{*{20}{l}}
I  {\le \sum\limits_{n = {n_k}}^{{n_l} - 1} {{{\left( {\mathbb{E}\left[ {{\left( {t_n}\right)^{p^*}{\left\| {J_r^{{\mathcal{Y}_{{i_n}}}}\left( {{F_{{i_n}}}\left( {{x_n}} \right) - {y_{{i_n}}}} \right)} \right\|}^{{p^*}}}} \right]} \right)}^{{{{1 \mathord{\left/
 {\vphantom {1 {{p^*}}}} \right.
 \kern-\nulldelimiterspace} {{p^*}}}}}}}} {\left( {\mathbb{E}\left[ {{{\left\| {{F_{{i_n}}'}\left( {{x_n}} \right)\left( {{x_n} - \hat x} \right)} \right\|}^p}} \right]} \right)^{{{1 \mathord{\left/
 {\vphantom {1 p}} \right.
 \kern-\nulldelimiterspace} p}}}}}\\[1mm]
{~ \le {\mu _1}\sum\limits_{n = {n_k}}^{{n_l} - 1} {{{\left( \mathbb{E}{\left[ {{{\left\| {{F_{{i_n}}}\left( {{x_n}} \right) - {y_{{i_n}}}} \right\|}^p}} \right]} \right)}^{{{(p - 1)} \mathord{\left/
 {\vphantom {{(p - 1)} p}} \right.
 \kern-\nulldelimiterspace} p}{\rm{ }}}}{{\left( \mathbb{E}{\left[ {{{\left\| {{F_{{i_n}}'}\left( {{x_n}} \right)\left( {{x_n} - \hat x} \right)} \right\|}^p}} \right]} \right)}^{{1 \mathord{\left/
 {\vphantom {1 p}} \right.
 \kern-\nulldelimiterspace} p}}}} }\\[1mm]
~\le {\mu _1}\left( {1 + \eta } \right)\sum\limits_{n = {n_k}}^{{n_l} - 1} {\mathbb{E}\left[ {{{\left\| {{F_{{i_n}}}\left( {{x_n}} \right) - {y_{{i_n}}}} \right\|}^p}} \right]},
\end{array}
\end{equation}
where we used Assumption \ref{5asumption operator}(d) in the last inequality. Similarly, by using the H$\ddot{o}$lder inequality, Assumption \ref{5asumption operator}(d), (\ref{eqs35}) and the property of $J_r^{{\mathcal{Y}_{{i_n}}}}$, we have
\[\hspace{-7mm}
\begin{array}{l}
II
{\le \sum\limits_{n = {n_k}}^{{n_l} - 1} {{{\left( {\mathbb{E}\left[ {{\left( {t_n}\right)^{p^*}{\left\| {J_r^{{\mathcal{Y}_{{i_n}}}}\left( {{F_{{i_n}}}\left( {{x_n}} \right) - {y_{{i_n}}}} \right)} \right\|}^{{p^*}}}} \right]} \right)}^{{{{1 \mathord{\left/
 {\vphantom {1 {{p^*}}}} \right.
 \kern-\nulldelimiterspace} {{p^*}}}}}}}} {\left( {\mathbb{E}\left[ {{{\left\| {{F_{{i_n}}'}\left( {{x_n}} \right)\left( {{x_{n_l}} -  {x_n}} \right)} \right\|}^p}} \right]} \right)^{{{1 \mathord{\left/
 {\vphantom {1 p}} \right.
 \kern-\nulldelimiterspace} p}}}}}\\[1mm]
~~\le {\mu _1}\sum\limits_{n = {n_k}}^{{n_l} - 1} {{{\left( \mathbb{E}{\left[ {{{\left\| {{F_{{i_n}}}\left( {{x_n}} \right) - {y_{{i_n}}}} \right\|}^p}} \right]} \right)}^{{{(p - 1)} \mathord{\left/
 {\vphantom {{(p - 1)} p}} \right.
 \kern-\nulldelimiterspace} p}}}{{\left( \mathbb{E}{\left[ {{{\left\| {{F_{{i_n}}'}\left( {{x_n}} \right)\left( {{x_{{n_l}}} - {x_n}} \right)} \right\|}^p}} \right]} \right)}^{{1 \mathord{\left/
 {\vphantom {1 p}} \right.
 \kern-\nulldelimiterspace} p}}}}\\[1mm]
~~\le {\mu _1}\left( {1 + \eta } \right)\sum\limits_{n = {n_k}}^{{n_l} - 1} {{{\left( {\mathbb{E}\left[ {{{\left\| {{F_{{i_n}}}\left( {{x_n}} \right) - {y_{{i_n}}}} \right\|}^p}} \right]} \right)}^{{(p-1) \mathord{\left/
 {\vphantom {1 p}} \right.
 \kern-\nulldelimiterspace} p}}}{{\left( {\mathbb{E}\left[ {{{\left\| {{F_{{i_n}}}\left( {{x_{{n_l}}}} \right) - {F_{{i_n}}}\left( {{x_n}} \right)} \right\|}^p}} \right]} \right)}^{{1 \mathord{\left/
 {\vphantom {1 2}} \right.
 \kern-\nulldelimiterspace} p}}}}.
\end{array}\]
By using the triangle inequality of the norm  ${\left( {\mathbb{E}\left[ {{{\left\| \cdot\right\|}^p}} \right]} \right)^{\frac{1}{p}}}$, we further derive that
\[\hspace{-17mm}
\begin{array}{l}
 {{{\left( {\mathbb{E}\left[ {{{\left\| {{F_{{i_n}}}\left( {{x_n}} \right) - {y_{{i_n}}}} \right\|}^p}} \right]} \right)}^{{(p-1) \mathord{\left/
 {\vphantom {1 p}} \right.
 \kern-\nulldelimiterspace} p}}}{{\left( {\mathbb{E}\left[ {{{\left\| {{F_{{i_n}}}\left( {{x_{{n_l}}}} \right) - {F_{{i_n}}}\left( {{x_n}} \right)} \right\|}^p}} \right]} \right)}^{{1 \mathord{\left/
 {\vphantom {1 2}} \right.
 \kern-\nulldelimiterspace} p}}}}\\[1mm]
~~~~~~~~~~~={{{\left( {\mathbb{E}\left[ {{{\left\| {{F_{{i_n}}}\left( {{x_n}} \right) - {y_{{i_n}}}} \right\|}^p}} \right]} \right)}^{{(p-1) \mathord{\left/
 {\vphantom {1 p}} \right.
 \kern-\nulldelimiterspace} p}}}{{\left( {\mathbb{E}\left[ {{{\left\| {{F_{{i_n}}}\left( {{x_{{n_l}}}} \right) -y_{i_n}+y_{i_n}- {F_{{i_n}}}\left( {{x_n}} \right)} \right\|}^p}} \right]} \right)}^{{1 \mathord{\left/
 {\vphantom {1 2}} \right.
 \kern-\nulldelimiterspace} p}}}}\\[1mm]
~~~~~~~~~~~\le {{{\left( {\mathbb{E}\left[ {{{\left\| {{F_{{i_n}}}\left( {{x_n}} \right) - {y_{{i_n}}}} \right\|}^p}} \right]} \right)}^{{(p-1) \mathord{\left/
 {\vphantom {1 2}} \right.
 \kern-\nulldelimiterspace} p}}}{{\left( {\mathbb{E}\left[ {{{\left\| {{F_{{i_n}}}\left( {{x_{{n_l}}}} \right) - {y_{{i_n}}}} \right\|}^p}} \right]} \right)}^{{1 \mathord{\left/
 {\vphantom {1 2}} \right.
 \kern-\nulldelimiterspace} p}}}}+{\mathbb{E}\left[ {{{\left\| {{F_{{i_n}}}\left( {{x_n}} \right) - {y_{{i_n}}}} \right\|}^p}} \right]}.
 \end{array}\]
We therefore obtain the following estimate
\begin{equation}\label{estimateII}
\hspace{-5mm}
\begin{array}{l}
II \le {\mu _1}\left( {1 + \eta } \right)\sum\limits_{n = {n_k}}^{{n_l} - 1} {{{\left( {\mathbb{E}\left[ {{{\left\| {{F_{{i_n}}}\left( {{x_n}} \right) - {y_{{i_n}}}} \right\|}^p}} \right]} \right)}^{{(p-1) \mathord{\left/
 {\vphantom {1 2}} \right.
 \kern-\nulldelimiterspace} p}}}{{\left( {\mathbb{E}\left[ {{{\left\| {{F_{{i_n}}}\left( {{x_{{n_l}}}} \right) - {y_{{i_n}}}} \right\|}^p}} \right]} \right)}^{{1 \mathord{\left/
 {\vphantom {1 2}} \right.
 \kern-\nulldelimiterspace} p}}}}\\[1mm]
 ~~~~~~+{\mu _1}\left( {1 + \eta } \right)\sum\limits_{n = {n_k}}^{{n_l} - 1} {\mathbb{E}\left[ {{{\left\| {{F_{{i_n}}}\left( {{x_n}} \right) - {y_{{i_n}}}} \right\|}^p}} \right]}.
 \end{array}
 \end{equation}
Further, we have from the measurability of $x_n^\delta$ with respect to $\mathcal{F}_n$ and the definition of $R_n$ that
\[\mathbb{E}\left[ {{{\left\| {{F_{{i_n}}}\left( {{x_n}} \right) - {y_{{i_n}}}} \right\|}^p}} \right] = \mathbb{E}\left[ {\mathbb{E}\left[ {{{\left\| {{F_{{i_n}}}\left( {{x_n}} \right) - {y_{{i_n}}}} \right\|}^p}|{\mathcal{F}_n}} \right]} \right] = {R_n},\]
which, together with (\ref{esimateI}), implies
\[I \le {\mu _1}\left( {1 + \eta } \right)\sum\limits_{n = {n_k}}^{{n_l} - 1} {{R_n}}.\]
However, we can not treat $\mathbb{E}\left[ {{{\left\| {{F_{{i_n}}}\left( {{x_{{n_l}}}} \right) - {y_{{i_n}}}} \right\|}^p}} \right]$ in the first term on the right hand side of (\ref{estimateII}) similarly as $x_{n_l}$ is not necessarily $\mathcal{F}_n$-measurable. Observing that
\[{\left\| {{F_{{i_n}}}\left( {{x_{{n_l}}}} \right) - {y_{{i_n}}}} \right\|^p} \le \sum\limits_{i = 1}^N {{{\left\| {{F_i}\left( {{x_{{n_l}}}} \right) - {y_i}} \right\|}^p}}, \]
from which we can see
\[\mathbb{E}\left[ {{{\left\| {{F_{{i_n}}}\left( {{x_{{n_l}}}} \right) - {y_{{i_n}}}} \right\|}^p}} \right] \le \mathbb{E}\left[ {\sum\limits_{i = 1}^N {{{\left\| {{F_i}\left( {{x_{{n_l}}}} \right) - {y_i}} \right\|}^p}} } \right] = N{R_{{n_l}}}.\]
Inserting the above inequality into (\ref{estimateII}), and together with (\ref{nlxiaoyun}), we can derive that
 \[\begin{array}{*{20}{l}}
{II \le {\mu _1}\left( {1 + \eta } \right)\left( { N^{\frac{1}{p}} \sum\limits_{n = {n_k}}^{{n_l} - 1} {R_n^{{\left( p-1\right) \mathord{\left/
 {\vphantom {1 2}} \right.
 \kern-\nulldelimiterspace} p}}} R_{{n_l}}^{{1 \mathord{\left/
 {\vphantom {1 2}} \right.
 \kern-\nulldelimiterspace} p}}+\sum\limits_{n = {n_k}}^{{n_l} - 1} {{R_n}} } \right)}\\[1mm]
{ ~~~ \le {\mu _1}\left( {1 + \eta } \right)\left( {1 + N^{\frac{1}{p}}  } \right)\sum\limits_{n = {n_k}}^{{n_l} - 1} {{R_n}} .}
\end{array}\]
Combining the above estimation on $I$ and $II$, and by making use of (\ref{monoex}), we have  with $c_2:={ \frac{{{\mu _1}\left( {1 + \eta } \right)\left( {2 + N^{\frac{1}{p}} } \right)}}{{{c_1}{\mu _2}}}}$ that
\begin{equation}\label{exnlxnk}
\begin{array}{l}
\left| \mathbb{E}\left[ {\left\langle {{\xi _{{n_l}}} - {\xi _{{n_k}}},{x_{{n_l}}} - \hat x} \right\rangle } \right] \right| \le {\mu _1}\left( {1 + \eta } \right)\left( {2 + N^{\frac{1}{p}} } \right)\sum\limits_{n = {n_k}}^{{n_l} - 1} {{R_n}} \\[1mm]
~~~~~~~~~~~~~~~~~~~~~~~~~~~~~~\le c_2\left( {\mathbb{E}\left[ {{D_{{\xi _{{n_k}}}}}\theta \left( {\hat x,{x_{{n_k}}}} \right)} \right] - \mathbb{E}\left[ {{D_{{\xi _{{n_l}}}}}\theta \left( {\hat x,{x_{{n_l}}}} \right)} \right]} \right).
\end{array}
\end{equation}
By inserting (\ref{exnlxnk}) into (\ref{xnlk}), we arrive at
\[\mathbb{E}\left[ {{D_{{\xi _{{n_k}}}}}\theta \left( {{x_{{n_l}}},{x_{{n_k}}}} \right)} \right] \le (1+c_2)\left( {\mathbb{E}\left[ {{D_{{\xi _{{n_k}}}}}\theta \left( {\hat x,{x_{{n_k}}}} \right)} \right] - \mathbb{E}\left[ {{D_{{\xi _{{n_l}}}}}\theta \left( {\hat x,{x_{{n_l}}}} \right)} \right]} \right),\]
which, together with (\ref{emxiu}), leads to (\ref{edxnlk}).
By the  $p$-convexity of $\theta$, we further have
\[\mathop {\sup }\limits_{l \ge k} \mathbb{E}\left[ {{{\left\| {{x_{{n_l}}} - {x_{{n_k}}}} \right\|}^p}} \right] \to 0~~\text{as}~~k \to \infty, \]
which implies that $\left\{ {{x_{{n_l}}}} \right\}$ is a Cauchy sequence in ${L^p}\left( {\Omega,\mathcal{X}} \right)$.

Thus, there exists a random vector $x^* \in {B_{2\rho} }\left( {{x_0}} \right)$ in $ {L^p}\left( {\Omega ,\mathcal{X}} \right)$ such that
\begin{equation}\label{xnlcon}
\mathbb{E}\left[ {{{\left\| {{x_{{n_l}}} - {x^*}} \right\|}^p}} \right] \to 0~~\text{as}~~l \to \infty.
\end{equation}
By taking a subsequence of $\left\{ {{{{n_l}}}} \right\}$ if necessary, from (\ref{xnlcon}) and (\ref{emxiu1}), we derive that
\begin{equation}\label{xnlf}
\mathop {\lim }\limits_{l \to \infty } \left\| {{x_{{n_l}}} - {x^*}} \right\| = 0~~\text{and}~~\mathop {\lim }\limits_{l \to \infty } \sum\limits_{i = 1}^N {{{\left\| {{F_i}\left( {{x_{{n_l}}}} \right) - {y_i}} \right\|}^p}}  = 0
\end{equation}
almost surely. Consequently,
\[\sum\limits_{i = 1}^N {{{\left\| {{F_i}\left( {{x^*}} \right) - {y_i}} \right\|}^p}}  = 0\]
almost surely, which means that $x^*$ is a solution of (\ref{nonlinear equation}) almost surely.

Next we show ${x^*} \in \mathscr{D}\left( \theta  \right)$ almost surely. It is enough to prove $\mathbb{E}\left[ {\theta \left( {{x^*}} \right)} \right] < \infty $. Recall that ${\xi _{{n_l}}} \in \partial \theta \left( {{x_{{n_l}}}} \right)$, we have
\begin{equation}\label{thetaxnl}
\theta \left( {{x_{{n_l}}}} \right) \le \theta \left( x \right) + \left\langle {{\xi _{{n_l}}},{x_{{n_l}}} - x} \right\rangle ,\forall x \in \mathcal{X}.
\end{equation}
Taking the expectation in (\ref{thetaxnl}) with $x$ replaced by $x^\dag$, it follows that
\[\begin{array}{l}
\mathbb{E}\left[ {\theta \left( {{x_{{n_l}}}} \right)} \right] \le \theta \left( {{x^\dag }} \right) + \mathbb{E}\left[ {\left\langle {{\xi _{{n_l}}},{x_{{n_l}}} - {x^\dag }} \right\rangle } \right]\\[1mm]
~~~~~~~~~~~~~ \le \theta \left( {{x^\dag }} \right) + \mathbb{E}\left[ {\left\langle {{\xi _{{n_l}}} - {\xi _{{0}}},{x_{{n_l}}} - {x^\dag }} \right\rangle } \right] + \mathbb{E}\left[ {\left\langle {{\xi _{{0}}},{x_{{n_l}}} - {x^\dag }} \right\rangle } \right].
\end{array}\]
By the lower semi-continuity of $\theta$ and Fatou's lemma, and utilizing (\ref{holder}), (\ref{exnlxnk}) and (\ref{xnlf}), we have with ${p^{ - 1}} + {\left( {{p^*}} \right)^{ - 1}} = 1$ that
\begin{equation}\label{xxingfi}
\begin{array}{l}
\mathbb{E}\left[ {\theta \left( {{x^*}} \right)} \right] \le \mathbb{E}\left[ {\mathop {\lim \inf }\limits_{l \to \infty } \theta \left( {{x_{{n_l}}}} \right)} \right] \le \mathop {\lim \inf }\limits_{l \to \infty } \mathbb{E}\left[ {\theta \left( {{x_{{n_l}}}} \right)} \right]\\[3mm]
~~~~~~~~~~~~\le \theta \left( {{x^\dag }} \right) + c_2\mathbb{E}\left[ {{D_{{\xi _{{0}}}}}\theta \left( { x^\dag,{x_{{0}}}} \right)} \right]  \\[1mm]
~~~~~~~~~~~~~~+ {\left( {\mathbb{E}\left[ {{{\left\| {{\xi _{{0}}}} \right\|}^{p^*}}} \right]} \right)^{{1 \mathord{\left/
 {\vphantom {1 2}} \right.
 \kern-\nulldelimiterspace} p^*}}}{\left( {\mathbb{E}\left[ {{{\left\| {{x^*} - {x^\dag }} \right\|}^p}} \right]} \right)^{{1 \mathord{\left/
 {\vphantom {1 2}} \right.
 \kern-\nulldelimiterspace} p}}} < \infty.
\end{array}
\end{equation}

Finally we turn to show $ \mathbb{E}\left[ {{D_{{\xi _n}}}\theta \left( {{x^*},{x_n}} \right)} \right]\to 0$ as $n \to \infty$.
To this end, we first prove that for $ x^* \in {L^p}\left( {\Omega,\mathcal{X}} \right)$ that is a solution of (\ref{nonlinear equation}) in $\mathscr{D}\left( \theta  \right)$ almost surely,
\begin{equation}\label{exnlxnl}
\mathop {\lim }\limits_{l \to \infty } \mathbb{E}\left[ {\left\langle {{\xi _{{n_l}}},{x_{{n_l}}} - {x^* }} \right\rangle } \right] = 0.
\end{equation}
For any $k<l$, we can write
 \[\mathbb{E}\left[ {\left\langle {{\xi _{{n_l}}},{x_{{n_l}}} - {x^* }} \right\rangle } \right] = \mathbb{E}\left[ {\left\langle {{\xi _{{n_l}}} - {\xi _{{n_k}}},{x_{{n_l}}} - {x^* }} \right\rangle } \right] + \mathbb{E}\left[ {\left\langle {{\xi _{{n_k}}},{x_{{n_l}}} - {x^* }} \right\rangle } \right].\]
We claim that for any fixed $k$, $\mathbb{E}{\left( {\left[ {{{\left\| {{\xi _{{n_k}}}} \right\|}^{{p^*}}}} \right]} \right)^{{1 \mathord{\left/
 {\vphantom {1 {{p^*}}}} \right.
 \kern-\nulldelimiterspace} {{p^*}}}}}$ is bounded. By using the definition of $\xi_{n_k}$ and $t_{n_k}$ in (\ref{sgdex}), the triangle inequality of the norm  ${\left( {\mathbb{E}\left[ {{{\left\| \cdot\right\|}^{p^*}}} \right]} \right)^{{1 \mathord{\left/
 {\vphantom {1 {{p^*}}}} \right.
 \kern-\nulldelimiterspace} {{p^*}}}}}$, and Assumption \ref{5asumption operator} (d), we have
 \[ \hspace{-10mm}
 \begin{array}{l}
{\left( {\mathbb{E}\left[ {{{\left\| {{\xi _{{n_k}}}} \right\|}^{{p^*}}}} \right]} \right)^{{1 \mathord{\left/
 {\vphantom {1 {{p^*}}}} \right.
 \kern-\nulldelimiterspace} {{p^*}}}}}\\[1mm]
 \quad = {\left( {{\mathbb{E}}\left[ {{{\left\| {{\xi _{{n_k} - 1}} - {t_{{n_k} - 1}}{F_{{i_{{n_k} - 1}}}'}{{\left( {{x_{{n_k} - 1}}} \right)}^*}J_r^{{{\cal Y}_{{i_{{n_k} - 1}}}}}\left( {{F_{{i_{{n_k} - 1}}}}\left( {{x_{{n_k} - 1}}} \right) - {y_{{i_{{n_k} - 1}}}}} \right)} \right\|}^{{p^*}}}} \right]} \right)^{{1 \mathord{\left/
 {\vphantom {1 {{p^*}}}} \right.
 \kern-\nulldelimiterspace} {{p^*}}}}}\\[1mm]
 \quad\le {\left( {\mathbb{E}\left[ {{{\left\| {{t_{{n_k} - 1}}{F_{{i_{{n_k} - 1}}}'}{{\left( {{x_{{n_k} - 1}}} \right)}^*}J_r^{{{\cal Y}_{{i_{{n_k} - 1}}}}}\left( {{F_{{i_{{n_k} - 1}}}}\left( {{x_{{n_k} - 1}}} \right) - {y_{{i_{{n_k} - 1}}}}} \right)} \right\|}^{{p^*}}}} \right]} \right)^{{1 \mathord{\left/
 {\vphantom {1 {{p^*}}}} \right.
 \kern-\nulldelimiterspace} {{p^*}}}}}\\
 \quad \quad +{\left( {\mathbb{E}\left[ {{{\left\| {{\xi _{{n_k} - 1}}} \right\|}^{{p^*}}}} \right]} \right)^{{1 \mathord{\left/
 {\vphantom {1 {{p^*}}}} \right.
 \kern-\nulldelimiterspace} {{p^*}}}}}\\[1mm]
 \quad\le {\mu _1}{B_0}{\left( {\mathbb{E}\left[ {{{\left\| {{F_{{i_{{n_k} - 1}}}}\left( {{x_{{n_k} - 1}}} \right) - {y_{{i_{{n_k} - 1}}}}} \right\|}^p}} \right]} \right)^{{1 \mathord{\left/
 {\vphantom {1 {{p^*}}}} \right.
 \kern-\nulldelimiterspace} {{p^*}}}}}+{\left( {\mathbb{E}\left[ {{{\left\| {{\xi _{{n_k} - 1}}} \right\|}^{{p^*}}}} \right]} \right)^{{1 \mathord{\left/
 {\vphantom {1 {{p^*}}}} \right.
 \kern-\nulldelimiterspace} {{p^*}}}}}\\[1.5mm]
 \quad=  {\mu _1}{B_0}{\left( {{R_{{{{n_k} - 1}}}}} \right)^{{1 \mathord{\left/
 {\vphantom {1 {{p^*}}}} \right.
 \kern-\nulldelimiterspace} {{p^*}}}}}+{\left( {\mathbb{E}\left[ {{{\left\| {{\xi _{{n_k} - 1}}} \right\|}^{{p^*}}}} \right]} \right)^{{1 \mathord{\left/
 {\vphantom {1 {{p^*}}}} \right.
 \kern-\nulldelimiterspace} {{p^*}}}}},
\end{array}\]
 where we used the definition of $R_{n_k-1}$ in the last step. Therefore, we can recursively deduce that
 \[{\left( {\mathbb{E}\left[ {{{\left\| {{\xi _{{n_k}}}} \right\|}^{{p^*}}}} \right]} \right)^{{1 \mathord{\left/
 {\vphantom {1 {{p^*}}}} \right.
 \kern-\nulldelimiterspace} {{p^*}}}}} \le {\left( {\mathbb{E}\left[ {{{\left\| {{\xi _0}} \right\|}^{{p^*}}}} \right]} \right)^{{1 \mathord{\left/
 {\vphantom {1 {{p^*}}}} \right.
 \kern-\nulldelimiterspace} {{p^*}}}}} + {\mu _1}{B_0}\sum\nolimits_{n = 0}^{{n_k} - 1} {{{\left( {{R_n}} \right)}^{{1 \mathord{\left/
 {\vphantom {1 {{p^*}}}} \right.
 \kern-\nulldelimiterspace} {{p^*}}}}}}, \]
 which together with (\ref{emxiu1}) implies that for any fixed $k$, the term $\mathbb{E}{\left( {\left[ {{{\left\| {{\xi _{{n_k}}}} \right\|}^{{p^*}}}} \right]} \right)^{{1 \mathord{\left/
 {\vphantom {1 {{p^*}}}} \right.
 \kern-\nulldelimiterspace} {{p^*}}}}}$ is bounded. Consequently, using the H$\ddot{o}$lder inequality (\ref{holder}) and (\ref{xnlcon}), there holds, for any fixed $k$,
\[\mathbb{E}\left[ {\left\langle {{\xi _{{n_k}}},{x_{{n_l}}} - {x^*}} \right\rangle } \right] \le {\left( {\mathbb{E}\left[ {{{\left\| {{\xi _{{n_k}}}} \right\|}^{p^*}}} \right]} \right)^{{1 \mathord{\left/
 {\vphantom {1 2}} \right.
 \kern-\nulldelimiterspace} p^*}}}{\left( {\mathbb{E}\left[ {{{\left\| {{x_{{n_l}}} - {x^*}} \right\|}^p}} \right]} \right)^{{1 \mathord{\left/
 {\vphantom {1 2}} \right.
 \kern-\nulldelimiterspace} p}}} \to 0\]
 as $l \to \infty$. Therefore,
 \[\mathop {\limsup }\limits_{l \to \infty } \left| \mathbb{E}\left[ {\left\langle {{\xi _{{n_l}}},{x_{{n_l}}} - {x^*}} \right\rangle } \right] \right| \le \mathop {\sup }\limits_{l \ge k} \left| \mathbb{E}\left[ {\left\langle {{\xi _{{n_l}}} - {\xi _{{n_k}}},{x_{{n_l}}} - {x^*}} \right\rangle } \right]\right|.\]
 By (\ref{exnlxnk}) and letting $k\to \infty$, we thus obtain (\ref{exnlxnl}).
 Taking (\ref{thetaxnl}) with $x=x^*$ and using (\ref{exnlxnl}), we have
 \[\mathop {\limsup }\limits_{l \to \infty } \mathbb{E}\left[ {\theta \left( {{x_{{n_l}}}} \right)} \right] \le \mathbb{E}\left[ {\theta \left( {{x^*}} \right)} \right] + \mathop {\lim }\limits_{l \to \infty } \mathbb{E}\left[ {\left\langle {{\xi _{{n_l}}},{x_{{n_l}}} - {x^*}} \right\rangle } \right] = \mathbb{E}\left[ {\theta \left( {{x^*}} \right)} \right].\]
 This together with (\ref{xxingfi}) gives
 \begin{equation}\label{ethetacon}
 \mathop {\lim }\limits_{l \to \infty } \mathbb{E}\left[ {\theta \left( {{x_{{n_l}}}} \right)} \right] = \mathbb{E}\left[ {\theta \left( {{x^*}} \right)} \right].
 \end{equation}
 Thus, by using (\ref{exnlxnl}) and (\ref{ethetacon}), we can deduce that
 \[\mathop {\lim }\limits_{l \to \infty } \mathbb{E}\left[ {{D_{{\xi _{{n_l}}}}}\theta \left( {{x^*},{x_{{n_l}}}} \right)} \right] = \mathop {\lim }\limits_{l \to \infty } \left( {\mathbb{E}\left[\theta \left( {{x^*}} \right) - \theta \left( {{x_{{n_l}}}} \right) + \left\langle {{\xi _{{n_l}}},{x_{{n_l}}} - {x^*}} \right\rangle  \right]} \right) = 0,\]
from which, by the monotonicity of $\left\{ \mathbb{E}\left[ {{D_{{\xi _n}}}\theta \left( {x^*,{x_n}} \right)} \right]\right\}$, there follows
 \[\mathop {\lim }\limits_{n \to \infty } \mathbb{E}\left[ {{D_{{\xi _n}}}\theta \left( {{x^*},{x_n}} \right)} \right] = 0.\]
 Since $\theta$ is $p$-convex, we further conclude that $\mathop {\lim }\limits_{n \to \infty } \mathbb{E}\left[ {{{\left\| {{x^*} - {x_n}} \right\|}^p}} \right] = 0$.

Finally, under the condition $\mathcal{N}\left( {F_i'\left( {{x^\dag }} \right)} \right) \subset \mathcal{N}\left( {F_i'\left( x \right)} \right)$ for all $x \in {B_{2\rho }}\left( {{x_0}} \right)$ and $i=1,2,\dots,N$, we can use the similar argument to  \cite[Proposition 3.6]{JinWang2013}  to obtain $x^*=x^\dag$.
\end{proof}
\subsection{Regularization property}
In what follows, we establish the regularization property of SGD-$\theta$ method (\ref{sgd}) under an a {\it priori} stopping rule. Before that, we show the stability property of the method in the following lemma.
%%%%%%%%%%%%%%%%%%%%%%%%%%%%%%%%%%%%%%%%%%%%%%%lemma(auxiliary)%%%%%%%%%%%%%%%%%%%%%%%%%%%%%%%%%%%%%%%%%%%%%%%%%%%%%%%%%%%%%%%%%
\begin{lemma}\label{sstability}
Let Assumptions \ref{5asumption theta} and \ref{5asumption operator} hold. Assume that $\mu_0>0$ and $ \tau>1$ are chosen such that (\ref{c1}) holds.
For any fixed integer $n\ge0$ and any path $\left( {{i_0},
 \cdots ,{i_{n - 1}}} \right) \in \mathcal{F}_n$, let $\left\{ {{\xi _n},{x_n}} \right\}$ and  $\left\{ {\xi _n^\delta ,x_n^\delta } \right\}$ be the sequences generated by SGD-$\theta$ method along the same path in the case of exact and noisy data, respectively. Then
\[\mathbb{E}\left[ {{{\left\| {x_n^\delta  - {x_n}} \right\|}^p}} \right] \to 0~~\text{and}~~\mathbb{E}\left[ {{{\left\| {\xi _n^\delta  - {\xi _n}} \right\|}^{p^*}}} \right] \to 0\]
 as $\delta \to 0$.
\end{lemma}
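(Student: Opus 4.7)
The plan is to argue by induction on $n$. The base case $n=0$ is immediate: both iterations start from the same $\xi_0=\xi_0^\delta$ and $x_0=x_0^\delta$, so the two expectations vanish. Observe that since the path $(i_0,\ldots,i_{n-1})$ is fixed, the iterates $\xi_n,\xi_n^\delta,x_n,x_n^\delta$ are deterministic functions of $\delta$, so the $L^p$/$L^{p^*}$ claim reduces to establishing the deterministic convergence $\|\xi_n^\delta-\xi_n\|\to 0$ and $\|x_n^\delta-x_n\|\to 0$ as $\delta\to 0$.

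For the inductive step, I would decompose
\[
\xi_{n+1}^\delta-\xi_{n+1}=(\xi_n^\delta-\xi_n)-(G_n^\delta-G_n),
\]
where $G_n^\delta:=t_n^\delta F_{i_n}'(x_n^\delta)^*J_r^{\mathcal{Y}_{i_n}}(F_{i_n}(x_n^\delta)-y_{i_n}^\delta)$ and $G_n$ is the corresponding exact-data quantity. The first term vanishes by the inductive hypothesis; the task is to show $G_n^\delta\to G_n$. I split into two cases according to whether $\|F_{i_n}(x_n)-y_{i_n}\|>0$ or $=0$.

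In the nondegenerate case, by the inductive hypothesis $x_n^\delta\to x_n$ (using Lemma 2.2(iii) applied to $\xi_n^\delta\to\xi_n$), by the continuity of $F_{i_n}$ and $F_{i_n}'$ on $B_{2\rho}(x_0)$ from Assumption 3.2(d), by the noise bound $\|y_{i_n}^\delta-y_{i_n}\|\le\delta_{i_n}$, and by the uniform continuity of $J_r^{\mathcal{Y}_{i_n}}$ on bounded sets (which follows from the uniform smoothness of $\mathcal{Y}_{i_n}$), we obtain $\|F_{i_n}(x_n^\delta)-y_{i_n}^\delta\|\to\|F_{i_n}(x_n)-y_{i_n}\|>0$. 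For $\delta$ small enough the discrepancy threshold $\tau\delta_{i_n}$ is strictly exceeded, so $t_n^\delta$ is given by the nonzero branch, and the denominator in $\tilde t_n^\delta$ stays bounded away from zero. Hence $\tilde t_n^\delta\to \tilde t_n$, $t_n^\delta\to t_n$, and $G_n^\delta\to G_n$ follows by continuity.

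The main obstacle, and the only subtle point, is the degenerate case $F_{i_n}(x_n)=y_{i_n}$, where $t_n=0$ and $G_n=0$ but $t_n^\delta$ may still be positive. Here continuity of the step size is not available; the key observation is that the cap $\tilde t_n^\delta\le\mu_1$ combined with $\|F_{i_n}'(x_n^\delta)\|\le B_0$ and $\|J_r^{\mathcal{Y}_{i_n}}(v)\|=\|v\|^{r-1}$ yields
\[
\|G_n^\delta\|\le \mu_1 B_0\,\|F_{i_n}(x_n^\delta)-y_{i_n}^\delta\|^{p-1},
\]
and the right-hand side tends to zero since $F_{i_n}(x_n^\delta)\to F_{i_n}(x_n)=y_{i_n}$ and $y_{i_n}^\delta\to y_{i_n}$. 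Combining the two cases gives $\xi_{n+1}^\delta\to\xi_{n+1}$, and finally Lemma 2.2(iii) delivers
\[
\|x_{n+1}^\delta-x_{n+1}\|=\|\nabla\theta^*(\xi_{n+1}^\delta)-\nabla\theta^*(\xi_{n+1})\|\le (2\sigma)^{-1/(p-1)}\|\xi_{n+1}^\delta-\xi_{n+1}\|^{1/(p-1)}\to 0,
\]
completing the induction and hence the two $L^p$/$L^{p^*}$ convergences.
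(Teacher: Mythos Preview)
Your approach is essentially the same as the paper's: induction on $n$, reduction to pathwise convergence via the finiteness of the sample space, the same two-case split, and the same use of Lemma~\ref{lemma2.2}(iii) to pass from $\xi$-convergence to $x$-convergence. The degenerate case is handled identically to the paper.

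There is, however, one small gap in your nondegenerate case. You assert that ``the denominator in $\tilde t_n^\delta$ stays bounded away from zero,'' but this need not be true: even when $F_{i_n}(x_n)\ne y_{i_n}$, it may happen that
\[
F_{i_n}'(x_n)^*J_r^{\mathcal{Y}_{i_n}}\bigl(F_{i_n}(x_n)-y_{i_n}\bigr)=0,
\]
in which case the denominator $\|F_{i_n}'(x_n^\delta)^*J_r^{\mathcal{Y}_{i_n}}(F_{i_n}(x_n^\delta)-y_{i_n}^\delta)\|^p$ tends to zero by continuity. The paper treats this as a separate subcase: since the numerator $\mu_0\|F_{i_n}(x_n^\delta)-y_{i_n}^\delta\|^{p(r-1)}$ converges to a strictly positive limit, the first argument of the $\min$ tends to $+\infty$, so $\tilde t_n^\delta=\mu_1$ for all small $\delta$, matching $\tilde t_n=\mu_1$. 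With this subcase added, your argument goes through and coincides with the paper's proof.
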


\begin{proof}
We use an inductive proof on $n$. Since $\xi_0^\delta=\xi_0$ and $x_0^\delta=x_0$, the result is trivial for $n=0$. Assume that the result is true for some $n$, we next prove that it also holds for $n+1$. Since there are only a finite number of sample paths of the form $\left( {{i_0},
 \cdots ,{i_{n - 1}}} \right)$ and the probability of each path is positive, we can derive from the induction hypothesis that
\begin{equation}\label{hyn}
{\left\| {x_n^\delta  - {x_n}} \right\|} \to 0~~\text{and}~~{\left\| {\xi _n^\delta  - {\xi _n}} \right\|} \to 0~~\text{as}~~\delta\to0
\end{equation}
along each sample path $\left( {{i_0}, \cdots ,{i_{n - 1}}} \right)$. We now show that along each path $\left( {{i_0}, \cdots ,{i_{n - 1}},{i_n}} \right)$, there hold
\[{\left\| {x_{n + 1}^\delta  - {x_{n + 1}}} \right\|} \to 0~~\text{and}~~ {\left\| {\xi_{n + 1}^\delta  - {\xi_{n + 1}}} \right\|} \to 0~~\text{as}~~\delta\to0.\]

Next we consider two cases. We first consider the case that ${F_{{i_n}}}\left( {{x_n}} \right) = {y_{{i_n}}}$. For this situation, ${\xi_{n + 1}} = {\xi_n}$ and
\[\xi_{n + 1}^\delta  - {\xi_{n + 1}} = \xi_n^\delta  - {\xi_n} - t_{{n}}^\delta {F_{{i_n}}'}{\left( {x_n^\delta } \right)^*}J_r^{\mathcal{Y}_{i_n}}\left( {{F_{{i_n}}}\left( {x_n^\delta } \right) - y_{{i_n}}^\delta } \right).\]
Since ${t_n^\delta  \le {\mu _1}{{\left\| {{F_{{i_n}}}\left( {x_n^\delta } \right) - y_{{i_n}}^\delta } \right\|}^{p - r}}}$ and through Assumption \ref{5asumption operator} (d), we have
\[{\left\| {\xi _{n + 1}^\delta  - {\xi _{n + 1}}} \right\| \le \left\| {\xi _n^\delta  - {\xi _n}} \right\| + {\mu _1}{B_0}{{\left\| {{F_{{i_n}}}\left( {x_n^\delta } \right) - y_{{i_n}}^\delta } \right\|}^{p - 1}}}.\]
Consequently, by using (\ref{hyn}) and the continuity of $F_{i_n}$, we obtain $\left\| {\xi_{n + 1}^\delta  - {\xi_{n + 1}}} \right\| \to 0$ as $\delta \to 0$. By the continuity of $\nabla {\theta ^*}$ guaranteed by Lemma \ref{lemma2.2} (iii), we further have $x_{n+ 1}^{{\delta }} = \nabla {\theta ^*}\left( {\xi _{n+ 1}^{{\delta }}} \right) \to \nabla {\theta ^*}\left( {{\xi _{n+1}}} \right) = {x_{n + 1}}$ as $\delta \to 0$.

Next we consider the situation that ${F_{{i_n}}}\left( {{x_n}} \right)\ne {y_{{i_n}}}$. By the definition of $\xi_{n+1}^\delta$  and $\xi_{n+1}$, we have
\[\hspace{-18mm}
\begin{array}{l}
\xi _{n + 1}^\delta  - {\xi _{n + 1}} = \xi _n^\delta  - {\xi _n} - t_n^\delta {F_{{i_n}}'}{\left( {x_n^\delta } \right)^*}J_r^{{{\cal Y}_{{i_n}}}}\left( {{F_{{i_n}}}\left( {x_n^\delta } \right) - y_{{i_n}}^\delta } \right)+ {t_n}{F_{{i_n}}'}{\left( {{x_n}} \right)^*}J_r^{{{\cal Y}_{{i_n}}}}\left( {{F_{{i_n}}}\left( {{x_n}} \right) - {y_{{i_n}}}} \right)\\[1mm]
~~~~~~~~~~~~~~~~  = \left( {\xi _n^\delta  - {\xi _n}} \right) + \left( {{t_n} - t_n^\delta } \right){F_{{i_n}}'}{\left( {{x_n}} \right)^*}J_r^{{{\cal Y}_{{i_n}}}}\left( {{F_{{i_n}}}\left( {{x_n}} \right) - {y_{{i_n}}}} \right)\\[1mm]
~~~~~~~~~~~~~~~~~~  + t_n^\delta {F_{{i_n}}'}{\left( {{x_n}} \right)^*}\left( {J_r^{{{\cal Y}_{{i_n}}}}\left( {{F_{{i_n}}}\left( {{x_n}} \right) - {y_{{i_n}}}} \right) - J_r^{{{\cal Y}_{{i_n}}}}\left( {{F_{{i_n}}}\left( {x_n^\delta } \right) - y_{{i_n}}^\delta } \right)} \right)\\[1mm]
~~~~~~~~~~~~~~~~~~  + t_n^\delta \left( {{F_{{i_n}}'}{{\left( {{x_n}} \right)}^*} - {F_{{i_n}}'}{{\left( {x_n^\delta } \right)}^*}} \right)J_r^{{{\cal Y}_{{i_n}}}}\left( {{F_{{i_n}}}\left( {x_n^\delta } \right) - y_{{i_n}}^\delta } \right).
\end{array}\]
By using Assumption \ref{5asumption operator} (d) and the property of $J_r^{{{\cal Y}_{{i_n}}}}$, we further obtain
\begin{equation}\label{stabilityxi}
\hspace{-10mm}
\begin{array}{l}
\left\| {\xi _{n + 1}^\delta  - {\xi _{n + 1}}} \right\| \le \left\| {\xi _n^\delta  - {\xi _n}} \right\| + \left| {{t_n} - t_n^\delta } \right|\left\| {{F_{{i_n}}'}{{\left( {{x_n}} \right)}^*}J_r^{{{\cal Y}_{{i_n}}}}\left( {{F_{{i_n}}}\left( {{x_n}} \right) - {y_{{i_n}}}} \right)} \right\|\\[1mm]
~~~~~~~~~~~~~~~~~~~~~+ t_n^\delta \left\| {{F_{{i_n}}'}{{\left( {{x_n}} \right)}^*}} \right\|\left\| {J_r^{{{\cal Y}_{{i_n}}}}\left( {{F_{{i_n}}}\left( {{x_n}} \right) - {y_{{i_n}}}} \right) - J_r^{{{\cal Y}_{{i_n}}}}\left( {{F_{{i_n}}}\left( {x_n^\delta } \right) - y_{{i_n}}^\delta } \right)} \right\|\\[1.5mm]
~~~~~~~~~~~~~~~~~~~~~ + t_n^\delta \left\| {{F_{{i_n}}'}{{\left( {{x_n}} \right)}^*} - {F_{{i_n}}'}{{\left( {x_n^\delta } \right)}^*}} \right\|\left\| {J_r^{{{\cal Y}_{{i_n}}}}\left( {{F_{{i_n}}}\left( {x_n^\delta } \right) - y_{{i_n}}^\delta } \right)} \right\|\\[1mm]
~~~~~~~~~~~~~~~~~~~ \le \left\| {\xi _n^\delta  - {\xi _n}} \right\| + {B_0}\left| {{t_n} - t_n^\delta } \right|{\left\| {{F_{{i_n}}}\left( {{x_n}} \right) - {y_{{i_n}}}} \right\|^{r - 1}}\\[1mm]
~~~~~~~~~~~~~~~~~~~~~~  + {B_0}t_n^\delta\left\| {J_r^{{{\cal Y}_{{i_n}}}}\left( {{F_{{i_n}}}\left( {{x_n}} \right) - {y_{{i_n}}}} \right) - J_r^{{{\cal Y}_{{i_n}}}}\left( {{F_{{i_n}}}\left( {x_n^\delta } \right) - y_{{i_n}}^\delta } \right)} \right\|\\[1mm]
~~~~~~~~~~~~~ ~~~~~~~~~ + t_n^\delta \left\| {{F_{{i_n}}'}{{\left( {{x_n}} \right)}^*} - {F_{{i_n}}'}{{\left( {x_n^\delta } \right)}^*}} \right\|{\left\| {{F_{{i_n}}}\left( {x_n^\delta } \right) - y_{{i_n}}^\delta } \right\|^{r - 1}}.
\end{array}
\end{equation}
We next prove that
\begin{equation}\label{tsta}
t_{{n}}^\delta  \to {t_{{n}}}~\text{as}~\delta \to 0.
\end{equation}
Recall  that
$${t_{{n}}} = \min \left\{ {\frac{{{\mu _0}{{\left\| {{F_{{i_n}}}\left( {{x_n}} \right) - {y_{{i_n}}}} \right\|}^{p(r-1)}}}}{{{{\left\| {{F_{{i_n}}'}{{\left( {{x_n}} \right)}^*}J_r^{\mathcal{Y}_{i_n}}\left( {{F_{{i_n}}}\left( {{x_n}} \right) - {y_{{i_n}}}} \right)} \right\|}^p}}},{\mu _1}} \right\}{\left\| {{{F_{{i_n}}}\left( {x_n } \right) - y_{{i_n}} } } \right\|^{p - r}}$$
and, by (\ref{hyn}) and the continuity of $F_{i_n}$, we have ${\left\| {{F_{{i_n}}}\left( {x_n^\delta } \right) - y_{{i_n}}^\delta } \right\| > \tau {\delta _{{i_n}}}}$ for small $\delta>0$, and thus
$$t_{{n}}^\delta  = \min \left\{ {\frac{{{\mu _0}{{\left\| {{F_{{i_n}}}\left( {x_n^\delta } \right) - y_{{i_n}}^\delta } \right\|}^{p(r-1)}}}}{{{{\left\| {{{F}_{{i_n}}'}{{\left( {x_n^\delta } \right)}^*}J_r^{\mathcal{Y}_{i_n}}\left( {{F_{{i_n}}}\left( {x_n^\delta } \right) - y_{{i_n}}^\delta } \right)} \right\|}^p}}},{\mu _1}} \right\}{\left\| {{{F_{{i_n}}}\left( {x_n^\delta } \right) - y_{{i_n}}^\delta } } \right\|^{p - r}}.$$
If ${F_{{i_n}}'}{{\left( {{x_n}} \right)}^*}J_r^{\mathcal{Y}_{i_n}}\left( {{F_{{i_n}}}\left( {{x_n}} \right) - {y_{{i_n}}}} \right)=0$, then we have ${t_{{n}}}=\mu_1{\left\| {{{F_{{i_n}}}\left( {x_n} \right) - y_{{i_n}} } } \right\|^{p - r}}$; from Assumption \ref{5asumption operator} (d) and the uniform smoothness of $\mathcal{Y}_{i_n}$, it is known that $F_{i_n}$, $F_{i_n}'$ and $J_r^{\mathcal{Y}_{i_n}}$ are continuous, and thus $t_{{n}}^\delta=\mu_1{\left\| {{{F_{{i_n}}}\left( {x_n^\delta } \right) - y_{{i_n}}^\delta } } \right\|^{p - r}}$ for sufficiently small $\delta$, which together with the continuity of $F_{i_n}$ gives $t_{{n}}^\delta  \to {t_{{n}}}$ as $\delta \to 0$. On the other hand, when ${F_{{i_n}}'}{{\left( {{x_n}} \right)}^*}J_r^{\mathcal{Y}_{i_n}}\left( {{F_{{i_n}}}\left( {{x_n}} \right) - {y_{{i_n}}}} \right)\neq0$, it follows from the induction hypothesis that $t_{{n}}^\delta  \to {t_{{n}}}$ as $\delta \to 0$.

Further, by utilizing the induction hypothesis (\ref{hyn}), (\ref{tsta}) and the continuity of $F_{i_n}$, $F_{i_n}'$, $J_r^{\mathcal{Y}_{i_n}}$, we can deduce from (\ref{stabilityxi}) that $\left\| {\xi_{n + 1}^\delta  - {\xi_{n + 1}}} \right\| \to 0$ as $\delta \to 0$. By the continuity of $\nabla {\theta ^*}$,  there follows $\left\| {x_{n + 1}^\delta  - {x_{n + 1}}} \right\| \to 0$ as $\delta \to 0$.

Finally, since there are only a finite number of sample paths of the form $\left( {{i_0}, \cdots ,{i_{n - 1}},{i_n}} \right)$, we can conclude that $\mathbb{E}\left[ {{{\left\| {x_{n + 1}^\delta  - {x_{n + 1}}} \right\|}^p}} \right] \to 0$ and $~\mathbb{E}\left[ {{{\left\| {\xi _{n+1}^\delta  - {\xi _{n+1}}} \right\|}^{p^*}}} \right] \to 0$ as $\delta  \to 0$. The inductive proof is therefore completed.
\end{proof}

%%%%%%%%%%%%%%%%%%%%%%%%%%%%%regularization%%%%%%%%%%%%%%%%%%%%%%%%%%%%%%%%%%%%%%%
Now we turn to show the regularization property of the SGD-$\theta$ method (\ref{sgd}) under an {\it a priori} stopping rule.
\begin{theorem}\label{6lemma}
Let Assumptions \ref{5asumption theta} and \ref{5asumption operator} hold. Assume that $\mu_0>0$ and $ \tau>1$ are chosen such that (\ref{c1}) holds. Assume that the stopping index $n_{\delta}$ satisfies $n_{\delta} \to \infty$ as $\delta \to 0$. Then, there exists a solution $x^* \in {B_{2\rho} }\left( {{x_0}} \right)\cap \mathscr{D}\left( \theta  \right)$ of (\ref{nonlinear equation}) such that
\[\mathop {\lim }\limits_{\delta  \to 0} \mathbb{E}\left[ {{{\left\| {x_{{n_\delta }}^\delta  - {x^*}} \right\|}^p}} \right] = 0 ~~\text{and}~~\mathop {\lim }\limits_{\delta  \to 0} \mathbb{E}\left[ {{D_{\xi _{{n_\delta }}^\delta }}\theta \left( {{x^*},x_{{n_\delta }}^\delta } \right)} \right] = 0.\]
Further, if $\mathcal{N}\left( {F_i'\left( {{x^\dag }} \right)} \right) \subset \mathcal{N}\left( {F_i'\left( x \right)} \right)$ for all $x \in {B_{2\rho }}\left( {{x_0}} \right)$ and $i=1,2,\dots,N$, then $x^*=x^\dag$.
\end{theorem}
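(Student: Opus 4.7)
The plan is to combine the three results already established: the noise-free convergence in expectation (Theorem \ref{sconvergence}), the stability at each fixed iteration index (Lemma \ref{sstability}), and the monotonicity in expectation (Lemma \ref{smonotoniciy}) via a standard diagonal/triangle argument. First I would invoke Theorem \ref{sconvergence} to obtain a solution $x^* \in B_{2\rho}(x_0)\cap\mathscr{D}(\theta)$ of (\ref{nonlinear equation}) with $\mathbb{E}[D_{\xi_n}\theta(x^*,x_n)] \to 0$. Since $x^*$ is a solution in $B_{2\rho}(x_0)\cap\mathscr{D}(\theta)$, Lemma \ref{smonotoniciy} applies with $\hat x = x^*$, giving that $n \mapsto \mathbb{E}[D_{\xi_n^\delta}\theta(x^*,x_n^\delta)]$ is monotonically non-increasing for each fixed $\delta$.

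Given $\varepsilon>0$, I would pick $N=N(\varepsilon)$ with $\mathbb{E}[D_{\xi_N}\theta(x^*,x_N)] < \varepsilon/2$, and then establish the continuity relation
\[
\mathbb{E}[D_{\xi_N^\delta}\theta(x^*,x_N^\delta)] \to \mathbb{E}[D_{\xi_N}\theta(x^*,x_N)] \quad \text{as } \delta \to 0.
\]
This is where the real work lies. Writing the Bregman distance in its Fenchel form (\ref{bregmanconju}),
\[
D_{\xi_N^\delta}\theta(x^*,x_N^\delta) - D_{\xi_N}\theta(x^*,x_N) = \theta^*(\xi_N^\delta) - \theta^*(\xi_N) - \langle \xi_N^\delta - \xi_N, x^*\rangle,
\]
I would sandwich this quantity using the $p^*$-smoothness inequality (\ref{theta * p*-smooth}) of Lemma \ref{lemma2.2} (ii) applied with the roles of $(\xi_N,x_N)$ and $(\xi_N^\delta,x_N^\delta)$ swapped. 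This yields both an upper and a lower bound by terms of the form $\langle \xi_N^\delta-\xi_N, x_N-x^*\rangle$ (or with $x_N^\delta$ in place of $x_N$) plus $\tfrac{1}{p^*(2\sigma)^{p^*-1}}\|\xi_N^\delta-\xi_N\|^{p^*}$. Taking expectations, the H\"older inequality (\ref{holder}) together with the boundedness $\{x_N,x_N^\delta,x^*\}\subset B_{2\rho}(x_0)$ and the stability estimates $\mathbb{E}[\|\xi_N^\delta-\xi_N\|^{p^*}]\to 0$, $\mathbb{E}[\|x_N^\delta-x_N\|^p]\to 0$ from Lemma \ref{sstability} drive both bounds to zero.

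With this continuity in hand, choose $\delta_0>0$ small enough that for all $\delta<\delta_0$ one has $n_\delta\ge N$ (possible since $n_\delta\to\infty$) and $|\mathbb{E}[D_{\xi_N^\delta}\theta(x^*,x_N^\delta)] - \mathbb{E}[D_{\xi_N}\theta(x^*,x_N)]| < \varepsilon/2$. The expected monotonicity from Lemma \ref{smonotoniciy} then gives
\[
\mathbb{E}[D_{\xi_{n_\delta}^\delta}\theta(x^*,x_{n_\delta}^\delta)] \le \mathbb{E}[D_{\xi_N^\delta}\theta(x^*,x_N^\delta)] < \varepsilon,
\]
so $\mathbb{E}[D_{\xi_{n_\delta}^\delta}\theta(x^*,x_{n_\delta}^\delta)]\to 0$. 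The $L^p$-convergence $\mathbb{E}[\|x_{n_\delta}^\delta-x^*\|^p]\to 0$ follows from the $p$-convexity inequality (\ref{theta p-convex}). The final assertion under the null-space condition $\mathcal{N}(F_i'(x^\dag))\subset\mathcal{N}(F_i'(x))$ for $x\in B_{2\rho}(x_0)$ is immediate, since Theorem \ref{sconvergence} already identifies $x^*=x^\dag$ under the same condition.

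The main obstacle is the continuity step above: the Bregman distance depends on $\xi$ rather nicely (through $\theta^*$, which is $p^*$-smooth by Lemma \ref{lemma2.2} (iii)) but the fact that $x_N^\delta, \xi_N^\delta$ are random vectors means the continuity must be transferred to the expectation, which requires correctly balancing the pairings $\langle \xi_N^\delta-\xi_N,\,\cdot\,\rangle$ against the random factors via H\"older's inequality; the rest of the argument is purely structural.
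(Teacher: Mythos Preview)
Your proposal is correct and follows the same overall architecture as the paper: invoke Theorem~\ref{sconvergence} for $x^*$, use the pathwise monotonicity of Lemma~\ref{smonotoniciy} with $\hat x=x^*$ to pass from index $n_\delta$ down to a fixed index $N$, then let $N\to\infty$.

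The only genuine difference lies in how the continuity step $\mathbb{E}[D_{\xi_N^\delta}\theta(x^*,x_N^\delta)]\to\mathbb{E}[D_{\xi_N}\theta(x^*,x_N)]$ is handled. The paper expands the Bregman distance in its primal form $\theta(x^*)-\theta(x_N^\delta)-\langle\xi_N^\delta,x^*-x_N^\delta\rangle$; the pairing term is treated by H\"older and Lemma~\ref{sstability} (as you do), while $\theta(x_N^\delta)$ is handled only one-sidedly via lower semi-continuity of $\theta$ and Fatou's lemma, yielding $\limsup_{\delta\to0}\mathbb{E}[D_{\xi_N^\delta}\theta(x^*,x_N^\delta)]\le\mathbb{E}[D_{\xi_N}\theta(x^*,x_N)]$, which already suffices. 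You instead use the Fenchel form (\ref{bregmanconju}) and sandwich $\theta^*(\xi_N^\delta)-\theta^*(\xi_N)$ with the $p^*$-smoothness inequality (\ref{theta * p*-smooth}); this is a legitimate and in fact sharper maneuver, since it produces a genuine two-sided limit rather than a $\limsup$ bound, and it avoids the Fatou step entirely. The cost is that you rely on the smoothness of $\theta^*$ from Lemma~\ref{lemma2.2}, whereas the paper's argument uses only the lower semi-continuity of $\theta$; both are available under Assumption~\ref{5asumption theta}, so neither approach is more general in this setting.
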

%%%%%%%%%%%%%%%%%%%%%%%%%%%%%%%%%%%%%%%%%%%%%%%%%%%%%%%%%%%%%%%%%%%%%%%%%%%%%%%%%%%%%%%%%%%%%%%%%%%%%%%%%%%%%%%%%%%%
\begin{proof}
Let $x_*$ be the solution of (\ref{nonlinear equation}) determined in Theorem \ref{sconvergence}. By the $p$-convexity of $\theta$, it suffices to show $\mathbb{E}\left[ {{D_{\xi _{{n_\delta }}^\delta }}\theta \left( {{x^*},x_{{n_\delta }}^\delta } \right)} \right] \to 0$ as $\delta \to 0$. Since $n_{\delta} \to \infty$ as  $\delta \to 0$, thus for any  fixed integer $n>0$, we have ${n_{\delta}} > n$ for sufficiently small $\delta$. We may use Lemma \ref{smonotoniciy} to obtain
\begin{equation}\label{remo}
\mathbb{E}\left[ {{D_{\xi _{{n_\delta }}^\delta }}\theta \left( {{x^*},x_{{n_\delta }}^\delta } \right)} \right] \le \mathbb{E}\left[ {{D_{\xi _n^\delta }}\theta \left( {{x^*},x_n^\delta } \right)} \right],
\end{equation}
from which there follows
\[\begin{array}{l}
\mathop {\lim \sup }\limits_{\delta  \to 0} \mathbb{E}\left[ {{D_{\xi _{{n_\delta }}^\delta }}\theta \left( {{x^*},x_{{n_\delta }}^\delta } \right)} \right] \le \mathop {\lim \sup }\limits_{\delta  \to 0} \mathbb{E}\left[ {{D_{\xi _n^\delta }}\theta \left( {{x^*},x_n^\delta } \right)} \right]\\[1mm]
~~~~~~~~~~~~~~~~~~~~~~~~~~~~~~~ \le \mathop {\lim \sup }\limits_{\delta  \to 0} \left( {\theta \left( {{x^*}} \right) - \mathbb{E}\left[ {\theta \left( {x_n^\delta } \right)} \right] - \mathbb{E}\left[ {\left\langle {\xi _n^\delta ,{x^*} - x_n^\delta } \right\rangle } \right]} \right)
\end{array}\]
for any $n\ge 0$. Using H$\ddot{o}$lder inequality (\ref{holder}), we get that
\[\begin{array}{l}
\quad \left| {\mathbb{E}\left[ {\left\langle {\xi _n^\delta ,{x^*} - x_n^\delta } \right\rangle } \right] - \mathbb{E}\left[ {\left\langle {{\xi _n},{x^*} - {x_n}} \right\rangle } \right]} \right|\\[1.5mm]
 \le \left| {\mathbb{E}\left[ {\left\langle {\xi _n^\delta  - {\xi _n},{x^*} - x_n^\delta } \right\rangle } \right]} \right| + \left| {\mathbb{E}\left[ {\left\langle {{\xi _n},{x_n} - x_n^\delta } \right\rangle } \right]} \right|\\[1.5mm]
 \le {\left( {\mathbb{E}\left[ {{{\left\| {\xi _n^\delta  - {\xi _n}} \right\|}^{p^*}}} \right]} \right)^{\frac{1}{p^*}}}{\left( {\mathbb{E}\left[ {{{\left\| {{x^*} - x_n^\delta } \right\|}^{p}}} \right]} \right)^{\frac{1}{{p}}}} + {\left( {\mathbb{E}\left[ {{{\left\| {{\xi _n}} \right\|}^{p^*}}} \right]} \right)^{\frac{1}{{p^*}}}}{\left( {\mathbb{E}\left[ {{{\left\| {{x_n} - x_n^\delta } \right\|}^p}} \right]} \right)^{\frac{1}{p}}},
\end{array}\]
which together with Lemma \ref{sstability} implies that
\[\mathop {\lim }\limits_{\delta  \to 0} \mathbb{E}\left[ {\left\langle {\xi _n^\delta ,{x^*} - x_n^\delta } \right\rangle } \right] = \mathbb{E}\left[ {\left\langle {{\xi _n},{x^*} - {x_n}} \right\rangle } \right].\]
Since Lemma \ref{sstability} shows that ${\left\| {x_n^\delta  - {x_n}} \right\|} \to 0~\text{as}~\delta\to0$  along each sample path $\left( {{i_0},\cdots ,{i_{n - 1}}} \right)$, using the lower semi-continuity of $\theta$ and Fatou's lemma, we arrive at
\[\mathbb{E}\left[ {\theta \left( {{x_n}} \right)} \right] \le \mathbb{E}\left[ {\mathop {\lim \inf }\limits_{\delta  \to 0} \theta \left( {x_n^\delta } \right)} \right] \le \mathop {\lim \inf }\limits_{\delta  \to 0} \mathbb{E}\left[ {\theta \left( {x_n^\delta } \right)} \right].\]
Consequently, we can deduce that
\[\begin{array}{l}
\mathop {\lim \sup }\limits_{\delta  \to 0} \mathbb{E}\left[ {{D_{\xi _{{n_\delta }}^\delta }}\theta \left( {{x^*},x_{{n_\delta }}^\delta } \right)} \right] \le \theta \left( {{x^*}} \right) - \mathop {\liminf}\limits_{\delta  \to 0} \mathbb{E}\left[ {\theta \left( {x_n^\delta } \right)} \right] - \mathop {\lim }\limits_{\delta  \to 0} \mathbb{E}\left[ {\left\langle {\xi _n^\delta ,{x^*} - x_n^\delta } \right\rangle } \right]\\[1mm]
 ~~~~~~~~~~~~~~~~~~~~~~~~~~~~~~~~~~~~\le \theta \left( {{x^*}} \right) - \mathbb{E}\left[ {\theta \left( {{x_n}} \right)} \right] - \mathbb{E}\left[ {\left\langle {{\xi _n},{x^*} - {x_n}} \right\rangle } \right]\\[1mm]
 ~~~~~~~~~~~~~~~~~~~~~~~~~~~~~~~~~~~~ = \mathbb{E}\left[ {{D_{{\xi _n}}}\theta \left( {{x^*},{x_n}} \right)} \right].
\end{array}\]
Letting  $n \to \infty$, we derive from Theorem \ref{sconvergence} that $\mathbb{E}\left[ {{D_{\xi _{{n_\delta }}^\delta }}\theta \left( {{x^*},x_{{n_\delta }}^\delta } \right)} \right] \to 0$ as $\delta \to 0$.   If $\mathcal{N}\left( {F_i'\left( {{x^\dag }} \right)} \right) \subset \mathcal{N}\left( {F_i'\left( x \right)} \right)$ for all $x \in {B_{2\rho }}\left( {{x_0}} \right)$ and $i=1,2,\dots,N$, one may follow Theorem \ref{sconvergence} to show that $x^*=x^\dag$. The proof is thus completed.
\end{proof}

\begin{remark}
For adaptive step sizes $\left\{ {t_n^\delta } \right\}_{n = 1}^\infty $ satisfying $\sum\nolimits_{n = 1}^\infty  {t_n^\delta }  = \infty $ and $\sum\nolimits_{n = 1}^\infty  {{{\left( {t_n^\delta } \right)}^2}}  < \infty$, the SGD method has been shown in \cite{JinB_2020} to be a regularization method in Hilbert spaces when the stopping index $n_\delta$ is chosen such that
 \[{\lim _{\delta  \to 0}}{n_\delta } = \infty~~ \text{and}~~ {\lim _{\delta  \to 0}}{\delta ^2}\sum\limits_{n = 1}^{{n_\delta }} {t_n^\delta }=0.\]
Similar results can be founded in \cite{JinBker2023} for linear inverse problems in Banach spaces. In particular, for polynomially decaying step sizes $t_n^\delta  = {t_0}{n^{ - \alpha }},$ the conditions
 $ \frac{1}{2} < \alpha  < 1$ and ${t_0}{\max _i}{\sup _{x\in B_{2\rho}(x_0)}}\left\| {F_i'(x)} \right\|^2 \le 1$ gives a valid step-size choice, and the stopping index $n_\delta$ should satisfy ${\lim _{\delta  \to 0}}n_\delta=\infty$ and ${\lim _{\delta  \to 0}} n_\delta{\delta ^{\frac{2}{{1 - \alpha }}}}=0 $. For the constant step sizes, the correspondence between the noise level and the step size reduces to  ${\lim _{\delta  \to 0}}\delta^2 n_\delta=0$. The proof of Theorem \ref{6lemma} only requires the stopping index $n_{\delta}$ satisfying  $n_{\delta} \to \infty$ as $\delta \to 0$, because the discrepancy principle is incorporated into the step size (\ref{3choicestep}). Such condition on $n_\delta$ is analogous to that for deterministic regularization methods \cite{Hanke1995A,JinWang2013}.
\end{remark}

\begin{remark}
In Theorem \ref{6lemma} we have shown the convergence result of SGD-$\theta$ method under an {\it a priori} stopping rule. The issue of the {\it a posteriori} stopping rule for stochastic iterative methods is desirable and challenging, even for Hilbert spaces \cite{Jahn-Jin-2020,Jin-Chen-2024,Jin-Liu-2024}. As a preliminary attempt, we design  an {\it a posteriori} stopping rule for SGD-$\theta$ method and give its finite-iteration termination property  in \ref{appendix_sgd}.
\end{remark}
\begin{remark}
Let $I_n= \left\{ {{i_1},\cdots ,{i_{N_b}}} \right\}$ be the subset randomly selected from $ \left\{ {1,2, \cdots ,N} \right\}$ with batch size $\left| {{I_n}} \right| = {N_b}$; let ${{\cal Y}_{{I_n}}} := {{\cal Y}_{{i_1}}} \times  \cdots  \times {{\cal Y}_{{i_{{N_b}}}}}$, $y_{{I_n}}^\delta : = \left( {y_{{i_1}}^\delta , \ldots ,y_{{i_{{N_b}}}}^\delta } \right)$ and define ${F_{{I_n}}}:\mathcal{X} \to {{\cal Y}_{{I_n}}}$ by ${F_{{I_n}}} = \left( {{F_{{i_1}}}, \ldots ,{F_{{i_{N_b}}}}} \right)$, then the mini-batch version of SGD-$\theta$ method can be formulated as
\begin{equation}\label{minibatch_SGD}
\begin{array}{l}
\xi _{n + 1}^\delta  = \xi _n^\delta  - t_n^\delta {F_{{I_n}}'}{\left( {x_n^\delta } \right)^*}J_r^{{{\cal Y}_{{I_n}}}}\left( {{F_{{I_n}}}\left( {x_n^\delta } \right) - y_{{I_n}}^\delta } \right),\\[1mm]
x_{n + 1}^\delta  = \arg \mathop {\min }\limits_{x \in {\cal X}} \left\{ {\theta \left( x \right) - \left\langle {\xi _{n + 1}^\delta ,x} \right\rangle } \right\},
\end{array}
\end{equation}
where $J_r^{\mathcal{Y}_{I_n}}$ denotes the duality mapping of $\mathcal{Y}_{I_n}$ with $1<r<\infty$ and $t_n^\delta$ is determined by (\ref{3choicestep}) with suitable modification. Following the arguments above, the convergence of the mini-batch version of SGD-$\theta$ method can be easily established. In the next section we will also use mini-batch SGD-$\theta$ method to do the numerics.
\end{remark}

%%------------------------------  section    ------------------
%----------------------------------------------------------------------------------------
%%------------------------------  section    ------------------
%----------------------------------------------------------------------------------------
\section{Numerical experiments}\label{3numbericalex}
In this section, we provide some numerical simulations on the SGD-$\theta$ method (\ref{sgd}) with the step size chosen by (\ref{3choicestep}). To show the performance of SGD-$\theta$ method, we compare the computational results with the ones obtained by \\[1mm]
\noindent (\textbf{SGD-Decaying}) SGD method with polynomially decaying step size \cite{JinB_2019,JinB_2020,JinBker2023}, i.e., the iteration (\ref{sgd}) with the step size $t_n^{\delta}$ determined by
\[t_n^\delta  = {t_0}{n^{ - \alpha }},\]
where $\alpha  \in \left( {\frac{1}{2} ,1} \right)$ and $t_0>0$. It is suggested in \cite{JinB_2019,JinB_2020} that the conditions
 $ \frac{1}{2} < \alpha  < 1$ and ${t_0}{\max _i}{\sup _{x\in B_{2\rho}(x_0)}}\left\| {F_i'(x)} \right\|^2 \le 1$ yields a valid step-size schedule and the smaller exponent $\alpha$ is desirable for convergence; in the numerical simulations, we take $\alpha=0.51$ and the parameter $t_0$ will be specified for each experiment.\\[1mm]
\noindent (\textbf{SGD-NDP}) SGD method with adaptive step size considered in \cite{JinQ_2023}, for which the discrepancy principle is not incorporated; that is, the iteration (\ref{sgd}) using the step size
(\ref{3choicestep}) with $\tau=0$, given by
\[t_n^\delta  = \min \left\{ {\frac{{{\mu _0}{{\left\| {{F_{{i_n}}}\left( {x_n^\delta } \right) - y_{{i_n}}^\delta } \right\|}^{p(r - 1)}}}}{{{{\left\| {{F_{{i_n}}'}{{\left( {x_n^\delta } \right)}^*}J_r^{{\mathcal{Y}_{{i_n}}}}\left( {{F_{{i_n}}}\left( {x_n^\delta } \right) - y_{{i_n}}^\delta } \right)} \right\|}^p}}},{\mu _1}} \right\}{\left\| {{F_{{i_n}}}\left( {x_n^\delta } \right) - y_{{i_n}}^\delta } \right\|^{p - r}}\]
for some positive constants $\mu_0$ and $\mu_1$.

For fair comparison, the constants $\mu_0$ and $\mu_1$ of SGD-NDP take the same values as in (\ref{3choicestep}). In view of Remark \ref{stepsize}, the choice of $\mu_0$ needs to satisfy (\ref{c1}) and the parameter $\mu_1>0$ can be selected to be any positive number, which will be specified for each experiment.
All the computational results shown below are derived from a single stochastic run, as is usually done in practice. During the implementation of (\ref{sgd}), a key ingredient is to solve the minimization problem
\begin{equation}\label{minimization problem}
x = \arg \mathop {\min }\limits_{z \in \mathcal{X}} \left\{ {\theta \left( z \right) - \left\langle {\xi ,z} \right\rangle } \right\}
\end{equation}
for any $\xi  \in {\mathcal{X}^*}$. For many important choices of $\theta$, (\ref{minimization problem}) can be solved easily.
In case  $\mathcal{X}={L^2}\left( \Omega  \right)$ and the sought solution satisfies the constraint $x\in \mathcal{C}$ with $\mathcal{C} \subset \mathcal{X}$ being a closed convex set, we will choose
\begin{equation}\label{thetal1}
\theta \left( x \right) = \frac{1}{{2}}\left\| x \right\|_{{L^2}\left( \Omega  \right)}^2 + {\mathbb{I}_{\mathcal{C}}}\left( x \right),
\end{equation}
where $\mathbb{I}_{\mathcal{C}}$ is the indicator function of $\mathcal{C}$. It is clear that this function $\theta$ satisfies Assumption \ref{5asumption theta} with $p=2$ and $\sigma=\frac{1}{{2 }}$. In particular, when $\mathcal{C} = \left\{ {x \in \mathcal{X}: x \ge 0~\text{a.e. on}~\Omega } \right\}$, the minimizer of  (\ref{minimization problem}) can be explicitly given by
\[\begin{array}{l}
x = \max \left\{ {\xi ,0} \right\}.
\end{array}\]
In case $\mathcal{X}={L^2}\left( \Omega  \right)$ and the sought solution is piecewise constant, we will take
\begin{equation}\label{thetatv}
\theta \left( x \right) = \frac{1}{{2\beta }}\left\| x \right\|_{{L^2}\left( \Omega  \right)}^2 + {\left| x \right|_{TV}},
\end{equation}
where ${\left| x \right|_{TV}}$ is the total variation of $x$ and the parameter $\beta>0$ reflects the role of the term ${\left| x \right|_{TV}}$.  The function $\theta$ in (\ref{thetatv})  satisfies Assumption \ref{5asumption theta} with $p=2$ and $\sigma=(2\beta)^{-1}$, refer to \cite{Bo2011Iterative}; the minimization problem (\ref{minimization problem}) becomes the total variation denoising problem \cite{Rudin1992Nonlinear}
\begin{equation}\label{TVSOLVE}
\begin{array}{l}
x = \arg \mathop {\min }\limits_{z \in {{L^2}\left( \Omega  \right)}} \left\{ {\frac{1}{{2\beta }}\left\| {z - \beta \xi } \right\|_{{L^2}\left( \Omega  \right)}^2 + {\left| z \right|_{TV}}} \right\}.
\end{array}
\end{equation}
Though there is no explicit form of the minimizer of (\ref{TVSOLVE}), it can be solved efficiently by the primal dual hybrid gradient (PDHG) method \cite{Zhu2008An}. During the computation, the minimization problem (\ref{TVSOLVE}) is solved by PDHG method, which is terminated if the relative duality gap is small than $10^{-3}$ or the number of iterations exceeds 200.
\subsection{Computed Tomography}
We first consider the computed tomography (CT) which concerns determining the density of cross sections of an object by measuring the attenuation of X-rays as they propagate through the object. Mathematically, one requires to recover a function supported on a bounded domain from its Radon transform \cite{Natterer2001}.
%\[y\left( {\phi ,\rho } \right) = \int_{\mathbb{R}} {f\left( {\rho \cos \phi  - t\sin \phi ,\rho \sin \phi  + t\cos \phi } \right)} dt,\]
%where $\phi  \in \left[ {0,2\pi } \right)$ and $\rho  \in {\mathbb{R}}$.
In our experiments, we assume that the sought image is supported on a square domain in $\mathbb{R}^2$, the size is $256\times256$ and it can be represented by a vector $x\in \mathbb{R}^Q$ with $Q=256\times256$. We consider the 2D parallel beam geometry, with 90 projection angles equally distributed between 1 and 180 degrees, 256 lines per projection. Let $y$ be the measurement data of attenuation along the rays which can be represented by a vector in $\mathbb{R}^M$ with $M=90\times256$. We use the function \texttt{paralleltomo} in the Matlab package AIR TOOLS \cite{Hansen2012} to generate the discrete problem. This leads to solve a linear ill-conditioned problem  $y=Fx$,
where $F$ is a sparse matrix with size $M\times Q$. Here, $M=23040$ and $Q=65536$. Let $F_i$ be $i$th row of $F$ and $y_i$ be the corresponding element of $y$.  This gives a linear system of the form (\ref{nonlinear equation}), that is,
\[{F_i}x = {y_i},i = 1,2, \ldots ,N\]
with problem size $N=23040$, where $\mathcal{X}= \left( {{\mathbb{R}^Q},{{\left\|  \cdot  \right\|}_2}} \right)$ and $\mathcal{Y}_i=\left( {\mathbb{R},{{\left\|  \cdot  \right\|}_r}} \right) (1<r<\infty)$ are Banach spaces. For executing SGD-$\theta$ method, we use mini batches with batch size $N_b$.
The batch size $N_b$ is taken to be $N_b=160$, unless otherwise stated. It is easy to check that the tangential cone condition (\ref{TCC}) is satisfied with $\eta =0$; thus,
Assumption \ref{5asumption operator}(d) holds.
In our simulations, the sought image is taken to be the Shepp-Logan phantom discretized on 256$\times$256 pixel grid and the pixel values vary in the interval $\left[ {0,1} \right]$; see figure \ref{ctexact}(c). Let $x^{\dag}$ be the vector generated by stacking all the columns of the sought image. Let $y_i={F_i}x^{\dag}$, $i=1,2,\ldots,N$, be the exact data; see figure \ref{ctexact}(a). In order to capture the nonnegativity of the sought solution, we take the function $\theta$ to be the form (\ref{thetal1}).
When implementing the SGD-$\theta$ method (\ref{sgd})-(\ref{3choicestep}), we take $\tau=1.1$. According to (\ref{c1}), we require $\mu_0< 2\left( {1 - \frac{1}{\tau }} \right)$. Thus, we take $\mu_0=0.18$ and $\mu_1=10000$. For performing SGD-Decaying method, we pick $t_0=0.01$. We take the initial guess $\xi_0=x_0=0$.
 %%%%%%%%%%%%%%%%%%%%%%%%%%%%%%%%%%%%%%%%%%%%%%%%%%%%%%%%
\begin{figure}[htbp]
\centering
\vspace{-0.35cm} %ÃÃ¨ÃÃÃÃ«ÃÃÃÃ¦ÃÃ½ÃÃÂµÃÂ¾Ã ÃÃ«
\subfigtopskip=2pt %ÃÃ¨ÃÃÃÃÃÂ¼ÃÃ«ÃÃÃÃ¦ÃÃ½ÃÃÂ»Ã²Â±Ã°ÂµÃÃÃÃÃÂµÃÂ¾Ã ÃÃ«
\subfigbottomskip=6pt %ÃÃ¨ÃÃÂµÃÂ¶Ã¾ÃÃÃÃÃÂ¼ÃÃ«ÂµÃÃÂ»ÃÃÃÃÃÂ¼ÂµÃÂ¾Ã ÃÃ«Â£Â¬Â¼Â´ÃÃÃÃ¦ÂµÃÃÂ·ÃÃ«ÃÃÃÃ¦ÂµÃÂ½ÃÂµÃÂ¾Ã ÃÃ«
\subfigcapskip=-5pt %ÃÃ¨ÃÃÃÃÃÂ¼ÃÃ«ÃÃÂ±ÃªÃÃ¢ÃÂ®Â¼Ã¤ÂµÃÂ¾Ã ÃÃ«
\subfigure[$\delta_{rel}=0.5$]{
 \includegraphics[width=5cm,height=4cm]{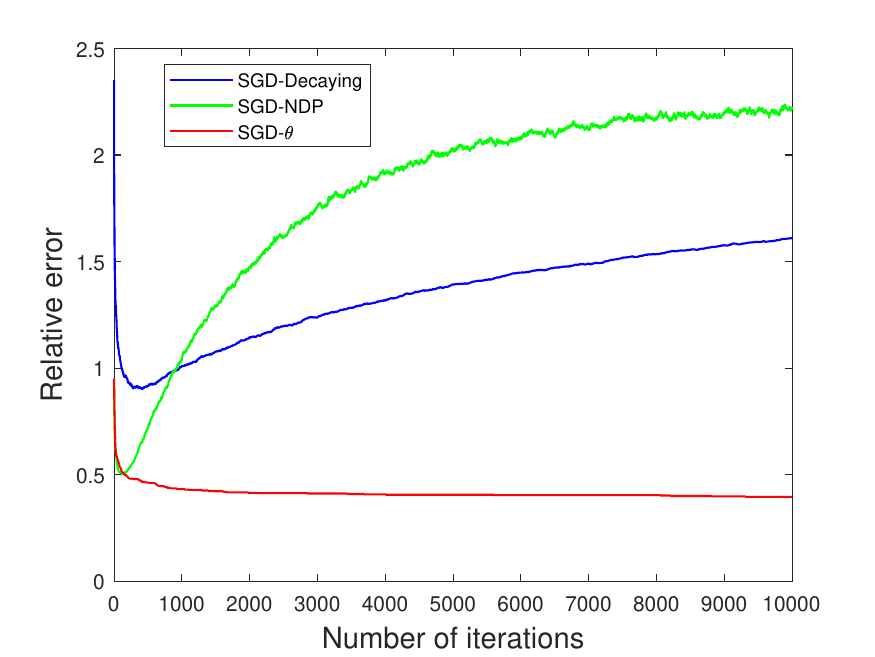}}
\hspace{-1em}
\subfigure[$\delta_{rel}=0.1$]{
\includegraphics[width=5cm,height=4cm]{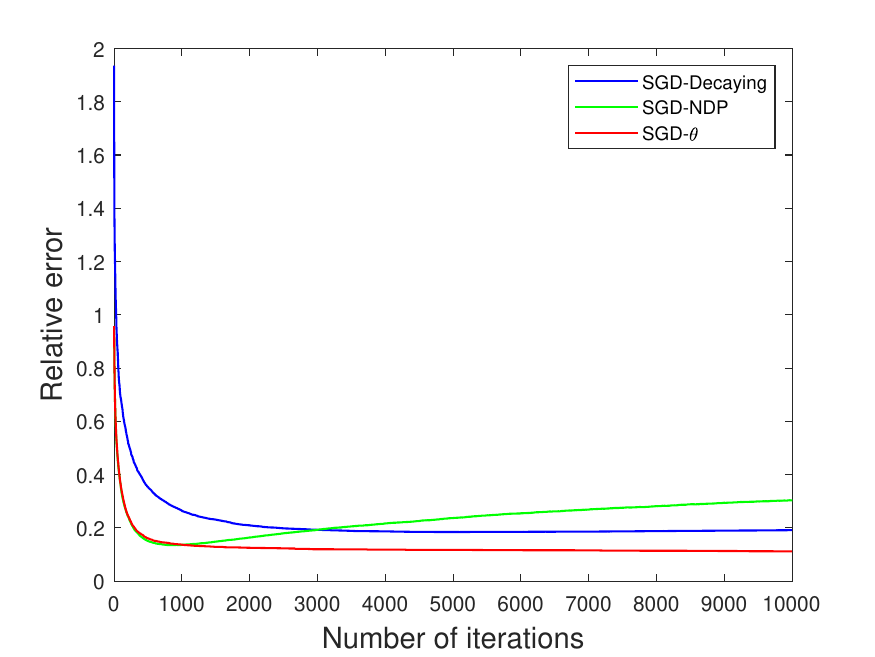}}
\hspace{-1em}
\subfigure[$\delta_{rel}=0.01$]{
\includegraphics[width=5cm,height=4cm]{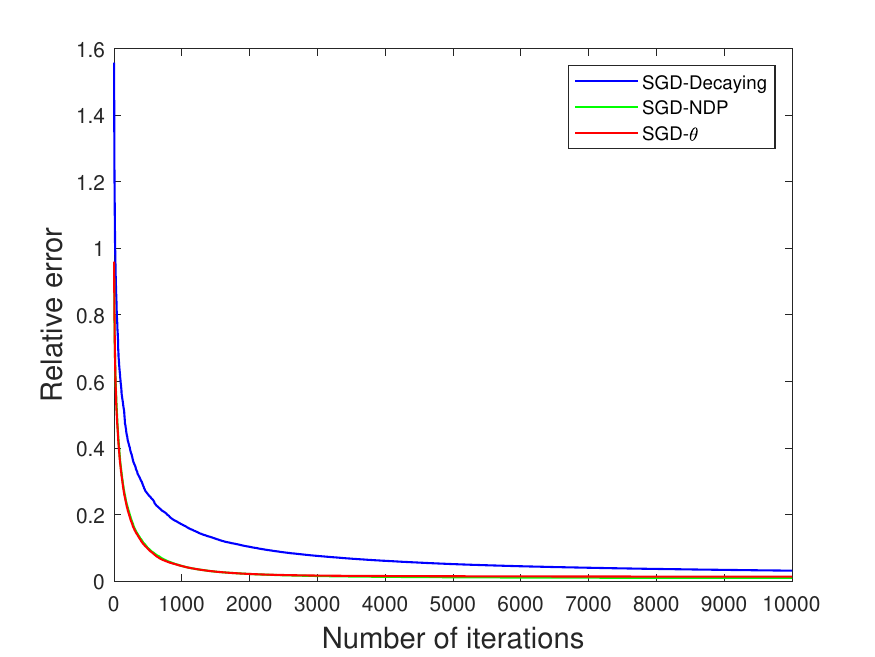}}
\captionsetup{font={small}}
\vspace*{-5pt}
\caption{Computational results for CT under Gaussian noise.}
\label{ctgaussian}
\end{figure}
%%%%%%%%%%%%%%%%%%%%%%%%%%%%%%%%%%%%%%%%%%%%%%%%%%%%%%%%%%%%%%%%%%%%%%%%%%%%%%

We first investigate the case that the data is corrupted by Gaussian noise and validate the effectiveness of SGD-$\theta$ method. The noisy data $y_i^{\delta}$ is generated by
\[
y_i^\delta  = {y_i} + {\delta _{rel}}{\left\| {{y_i}} \right\|_{{2}}}{\varepsilon _i},~~i=1,2,\ldots,N,\]
where ${\delta _{rel}}$ is the relative noise level and ${\varepsilon _i}$s follow the standard Gaussian distribution. Then instead of $y_i$ only noisy data $y_i^{\delta}$ are available with the noise levels ${\delta _i} = {\delta _{rel}}{\left\| {{y_i}} \right\|_{{2}}},i=1,2,\ldots,N$; we will use $\left\{ {y_i^\delta } \right\}_{i = 1}^N$ to reconstruct $x^{\dag}$. To remove the Gaussian noise efficiently, we choose $r=2$ (i.e., ${\mathcal{Y}_i} = \left( {\mathbb{R},{{\left\|  \cdot  \right\|}_2}} \right)$) and execute the SGD-$\theta$ method; as a comparison, we also perform SGD-Decaying and SGD-NDP methods. In figure \ref{ctgaussian} we display the evolution of relative errors ${{{{{\left\| {x_n^\delta  - {x^\dag }} \right\|}_{{2}}}} \mathord{\left/
 {\vphantom {{{{\left\| {x_n^\delta  - {x^\dag }} \right\|}_{{2}}}} {\left\| {{x^\dag }} \right\|}}} \right.
 \kern-\nulldelimiterspace} {\left\| {{x^\dag }} \right\|}}_{{2}}}$ by SGD-$\theta$, SGD-Decaying and SGD-NDP methods, under relative noise levels ${\delta _{rel}}=0.5,0.1$ and $0.01$, for 10000 iterations. As can be seen from figure \ref{ctgaussian}, the iterative errors of SGD-Decaying and SGD-NDP methods may suffer from oscillations that are especially severe for larger noise levels; moreover, SGD-Decaying and SGD-NDP methods exhibit semi-convergence behavior, that is, the iterates first converges to the sought solution, and then begin to diverge as the iteration proceeds. We also observe that, since SGD-NDP method allows using larger step size than SGD-Decaying, it converges faster in the initial stage, and diverges more quickly from the sought solution. However, our SGD-$\theta$ method could remarkably suppress the oscillations and reduce the semi-convergence by incorporating the spirit of the discrepancy principle into the step size (\ref{3choicestep}), which clearly demonstrates the effectiveness of SGD-$\theta$ method.
  \begin{figure}[ht]
\centering
\vspace{-0.35cm} %ÃÃ¨ÃÃÃÃ«ÃÃÃÃ¦ÃÃ½ÃÃÂµÃÂ¾Ã ÃÃ«
\subfigtopskip=2pt %ÃÃ¨ÃÃÃÃÃÂ¼ÃÃ«ÃÃÃÃ¦ÃÃ½ÃÃÂ»Ã²Â±Ã°ÂµÃÃÃÃÃÂµÃÂ¾Ã ÃÃ«
\subfigbottomskip=6pt %ÃÃ¨ÃÃÂµÃÂ¶Ã¾ÃÃÃÃÃÂ¼ÃÃ«ÂµÃÃÂ»ÃÃÃÃÃÂ¼ÂµÃÂ¾Ã ÃÃ«Â£Â¬Â¼Â´ÃÃÃÃ¦ÂµÃÃÂ·ÃÃ«ÃÃÃÃ¦ÂµÃÂ½ÃÂµÃÂ¾Ã ÃÃ«
\subfigcapskip=-5pt %ÃÃ¨ÃÃÃÃÃÂ¼ÃÃ«ÃÃÂ±ÃªÃÃ¢ÃÂ®Â¼Ã¤ÂµÃÂ¾Ã ÃÃ«
\subfigure[Exact data]{
\includegraphics[scale=0.29]{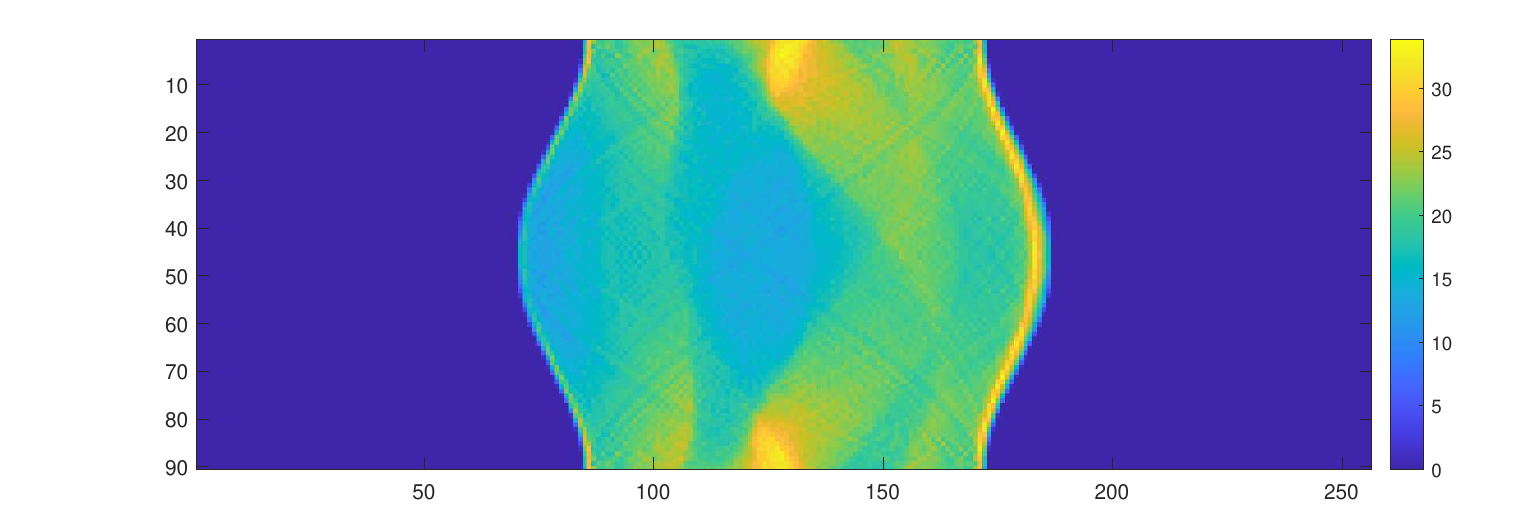}}
\hspace{-1em}
\subfigure[Noisy data ($\delta_{rel}=0.01$)]{
 \includegraphics[scale=0.29]{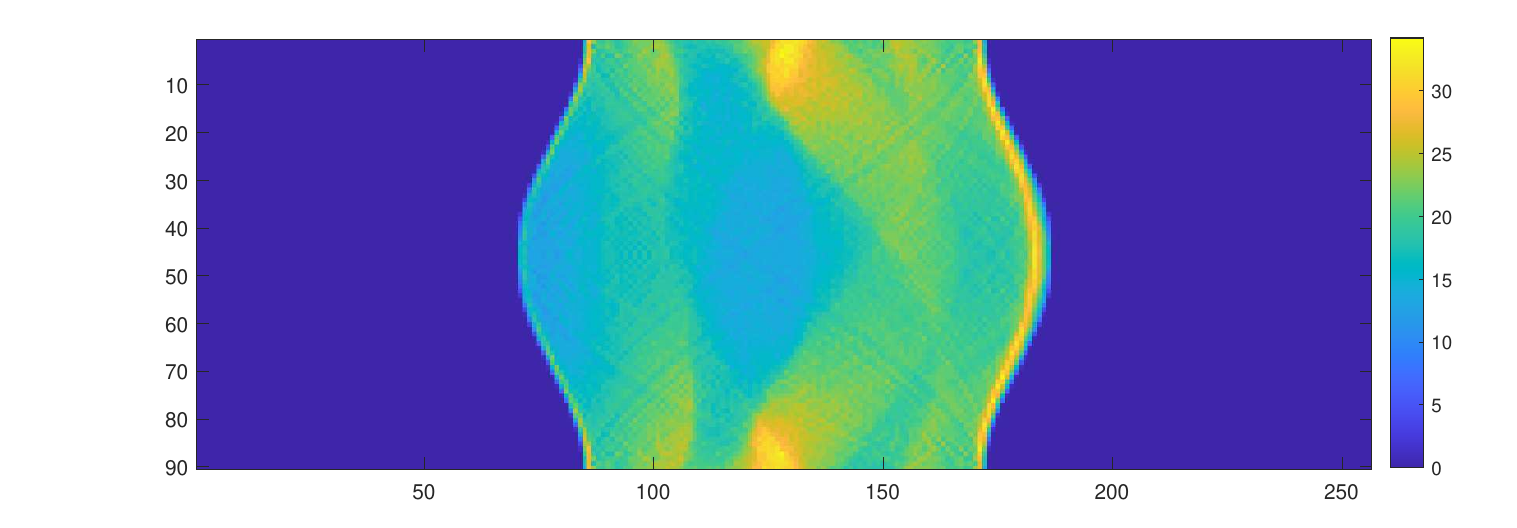}}\\[-2.5mm]
 \subfigure[True solution]{
\includegraphics[scale=0.5]{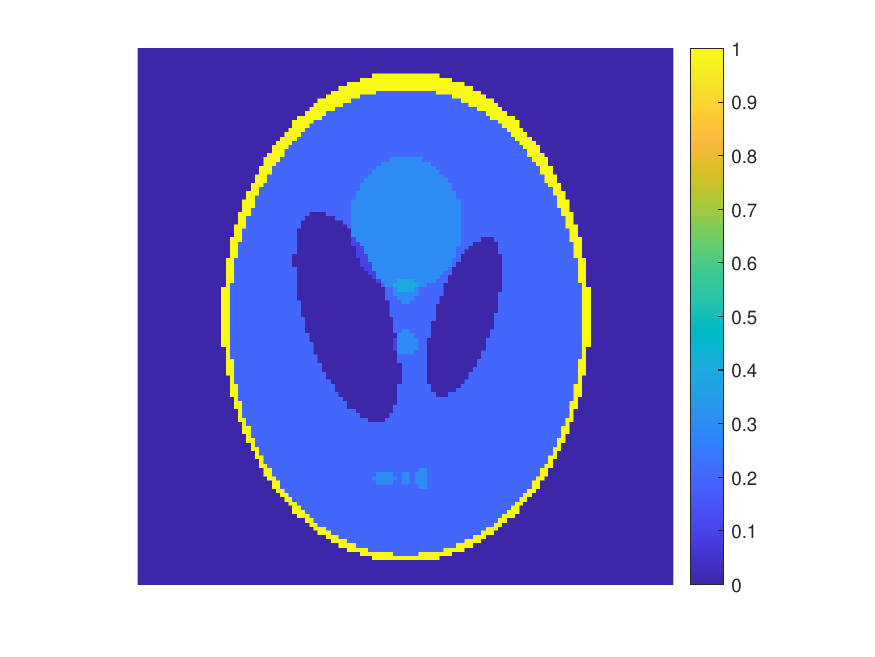}}
\hspace{-1em}
\subfigure[Reconstruction by SGD-$\theta$]{
 \includegraphics[scale=0.5]{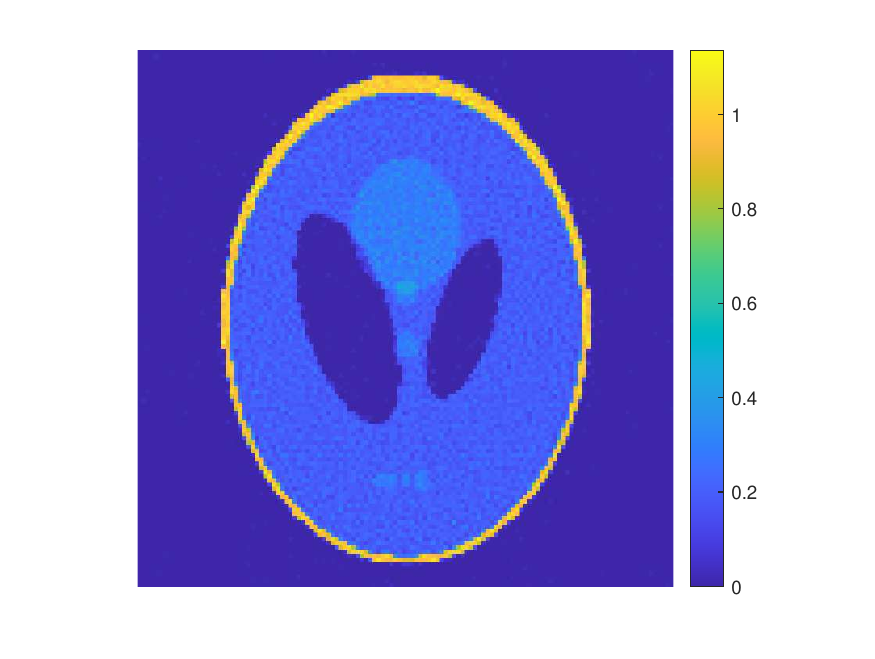}}
\captionsetup{font={small}}
\vspace*{-5pt}
\caption{Results for CT under Gaussian noise with ${\delta _{rel}}=0.01$.}
\label{ctexact}
\end{figure}
%%%%%%%%%%%%%%%%%%%%%%%%%%%%%%%%%%%%%%%%%%%%%%%%%%%%%%%%%%%%%%%%%%%%%%%%%%%%%%%%%%%%%%%%%%%%%%%%%%%%%

 To visualize the quality of reconstructions of SGD-$\theta$ method, in figure \ref{ctexact}(b) and (d) we show the results with relative noise level ${\delta _{rel}}=0.01$. \Cref{ctexact}(b) plots the noisy data. \Cref{ctexact}(d) presents the reconstruction of SGD-$\theta$ using ${\mathcal{Y}_i} = \left( {\mathbb{R},{{\left\|  \cdot  \right\|}_2}} \right)$ after 10000 iterations, from which one can observe that the reconstructed solution agrees well with the true solution.

  %%%%%%%%%%%%%%%%%%%%%%%%%%%%%%%%%%%%%%%%%%%%%%%%%%%%
 \begin{figure}[ht]
\centering
\vspace{-0.35cm} %ÃÃ¨ÃÃÃÃ«ÃÃÃÃ¦ÃÃ½ÃÃÂµÃÂ¾Ã ÃÃ«
\subfigtopskip=2pt %ÃÃ¨ÃÃÃÃÃÂ¼ÃÃ«ÃÃÃÃ¦ÃÃ½ÃÃÂ»Ã²Â±Ã°ÂµÃÃÃÃÃÂµÃÂ¾Ã ÃÃ«
\subfigbottomskip=6pt %ÃÃ¨ÃÃÂµÃÂ¶Ã¾ÃÃÃÃÃÂ¼ÃÃ«ÂµÃÃÂ»ÃÃÃÃÃÂ¼ÂµÃÂ¾Ã ÃÃ«Â£Â¬Â¼Â´ÃÃÃÃ¦ÂµÃÃÂ·ÃÃ«ÃÃÃÃ¦ÂµÃÂ½ÃÂµÃÂ¾Ã ÃÃ«
\subfigcapskip=-5pt %ÃÃ¨ÃÃÃÃÃÂ¼ÃÃ«ÃÃÂ±ÃªÃÃ¢ÃÂ®Â¼Ã¤ÂµÃÂ¾Ã ÃÃ«
\subfigure[Salt-and-pepper noisy data ($\kappa=5\%$)]{
\includegraphics[scale=0.29]{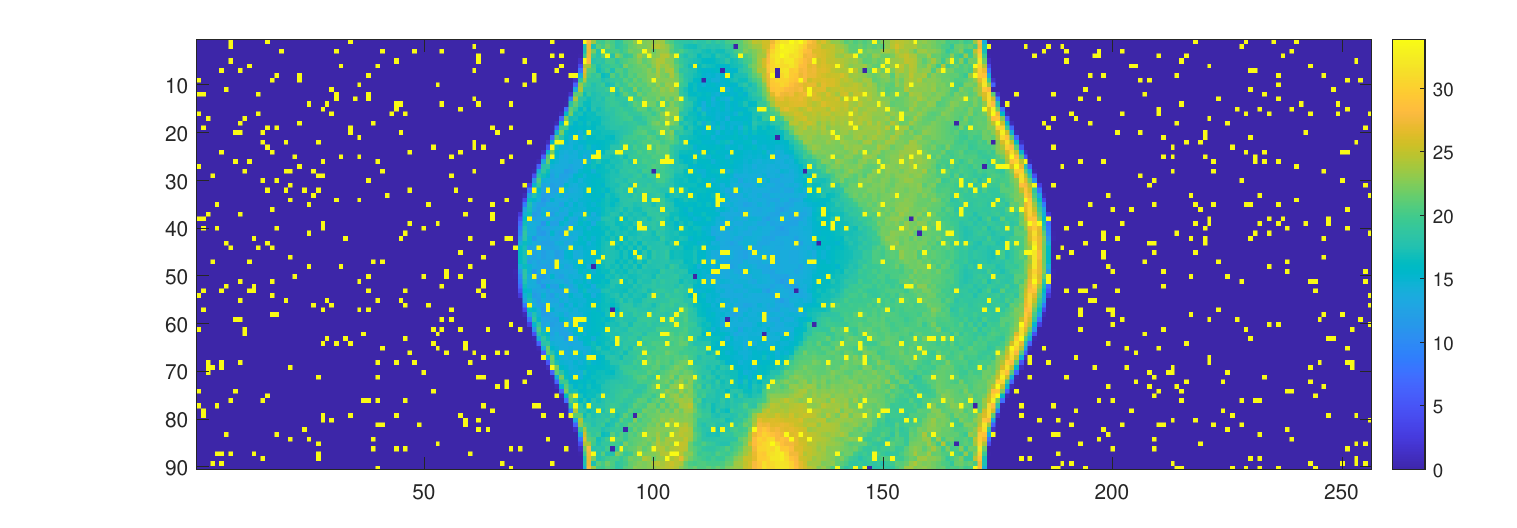}}
\hspace{-1em}
\subfigure[Salt-and-pepper noisy data ($\kappa=10\%$)]{
\includegraphics[scale=0.29]{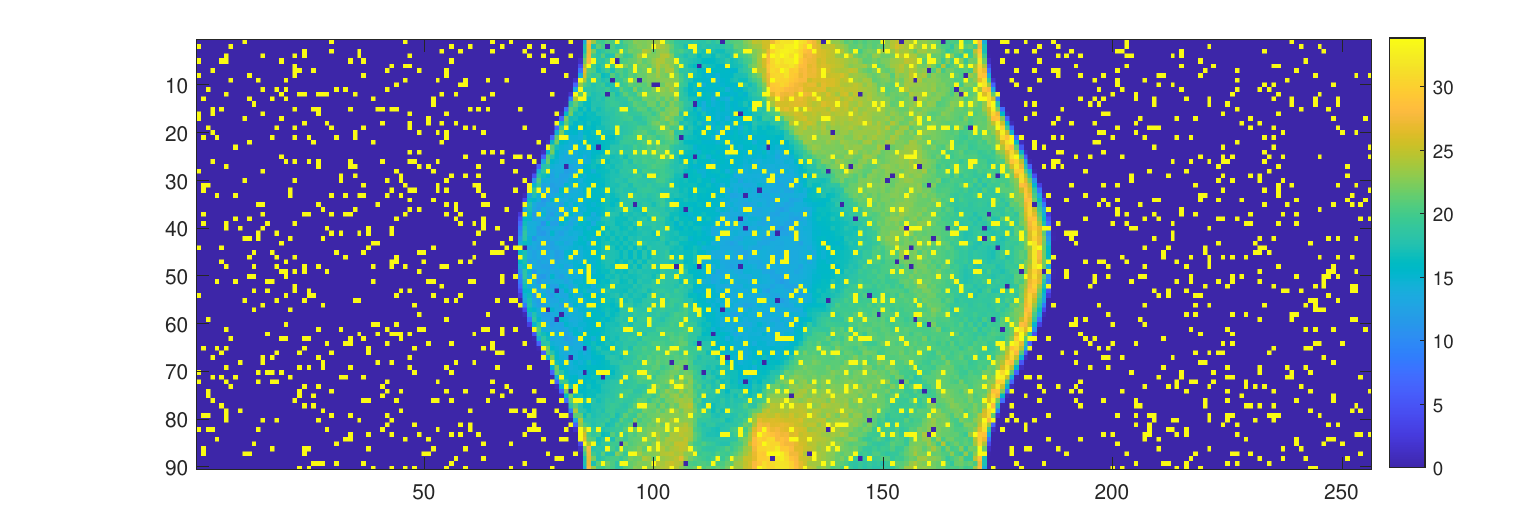}}\\[-2.5mm]
\subfigure[Evolution of relative error ($\kappa=5\%$)]{
\includegraphics[scale=0.5]{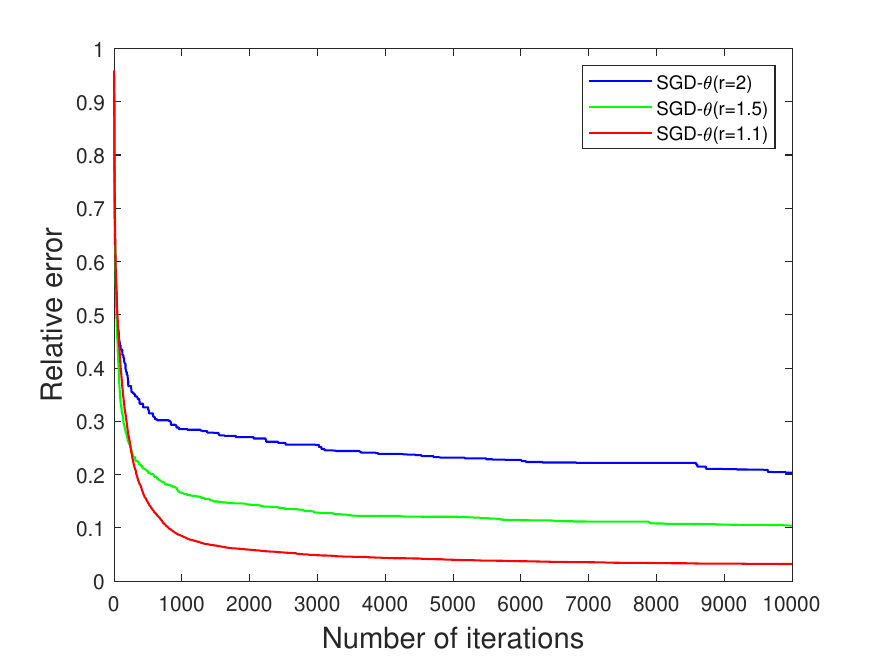}}
\hspace{-1em}
\subfigure[Evolution of relative error ($\kappa=10\%$)]{
\includegraphics[scale=0.5]{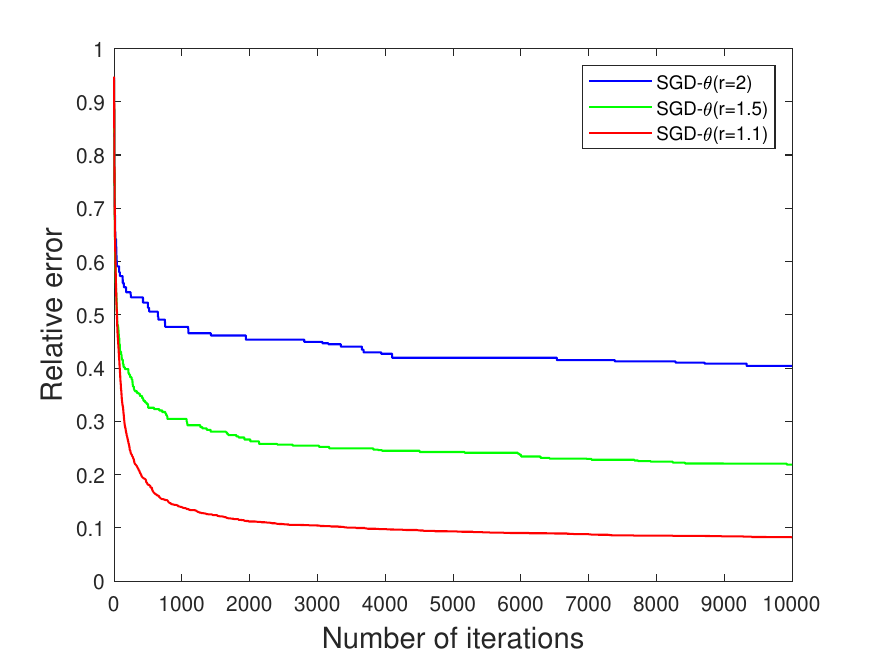}}
\captionsetup{font={small}}
\vspace*{-5pt}
\caption{Computational results for CT under salt-and-pepper noise. Left column: $\kappa=5\%$; Right column: $\kappa=10\%$.}
\label{ctsaltandpepper}
\end{figure}
 %%%%%%%%%%%%%%%%%%%%%%%%%%%%%%%%%%%%%%%%%%%%%%%%%%%%%
\begin{figure}[htbp]
\centering
\vspace{-0.35cm} %ÃÃ¨ÃÃÃÃ«ÃÃÃÃ¦ÃÃ½ÃÃÂµÃÂ¾Ã ÃÃ«
\subfigtopskip=2pt %ÃÃ¨ÃÃÃÃÃÂ¼ÃÃ«ÃÃÃÃ¦ÃÃ½ÃÃÂ»Ã²Â±Ã°ÂµÃÃÃÃÃÂµÃÂ¾Ã ÃÃ«
\subfigbottomskip=6pt %ÃÃ¨ÃÃÂµÃÂ¶Ã¾ÃÃÃÃÃÂ¼ÃÃ«ÂµÃÃÂ»ÃÃÃÃÃÂ¼ÂµÃÂ¾Ã ÃÃ«Â£Â¬Â¼Â´ÃÃÃÃ¦ÂµÃÃÂ·ÃÃ«ÃÃÃÃ¦ÂµÃÂ½ÃÂµÃÂ¾Ã ÃÃ«
\subfigcapskip=-5pt %ÃÃ¨ÃÃÃÃÃÂ¼ÃÃ«ÃÃÂ±ÃªÃÃ¢ÃÂ®Â¼Ã¤ÂµÃÂ¾Ã ÃÃ«
%\subfigure[]{
%\includegraphics[width=5cm,height=4cm]{}}
%\subfigure[]{
%\includegraphics[width=5cm,height=4cm]{}}\\[-2.5mm]
\subfigure[SGD-$\theta$ with r=1.1]{
\includegraphics[width=5cm,height=4cm]{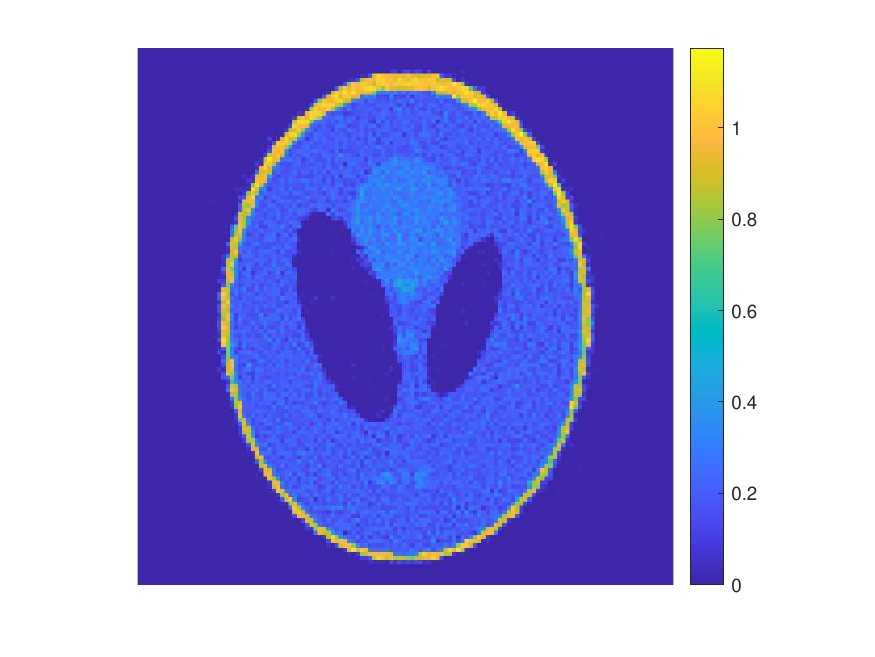}}
\hspace{-1em}
\subfigure[SGD-$\theta$ with r=1.5]{
\includegraphics[width=5cm,height=4cm]{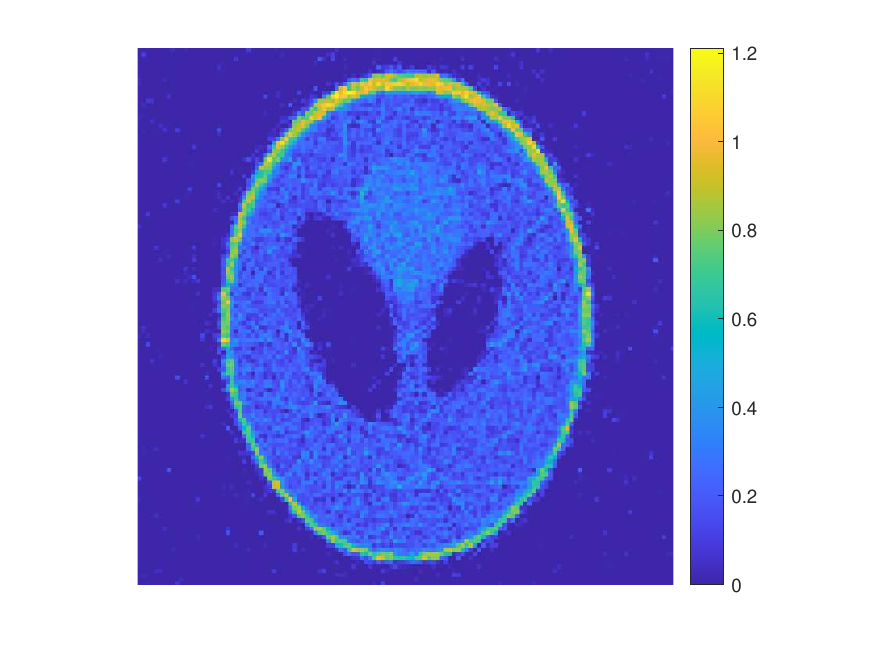}}
\hspace{-1em}
\subfigure[SGD-$\theta$ with r=2]{
\includegraphics[width=5cm,height=4cm]{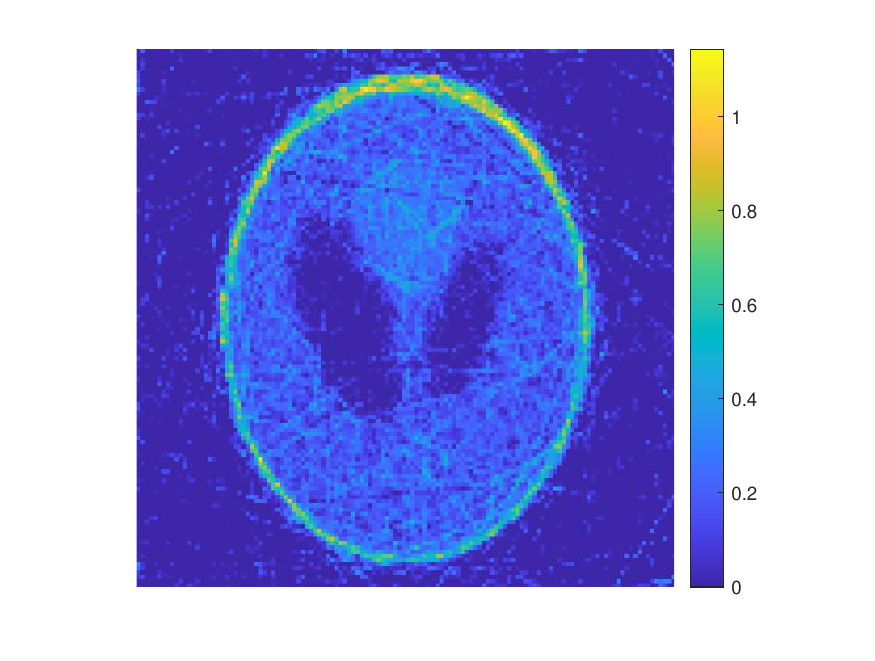}}\\[-2.5mm]
\subfigure[SGD-$\theta$ with r=1.1]{
\includegraphics[width=5cm,height=4cm]{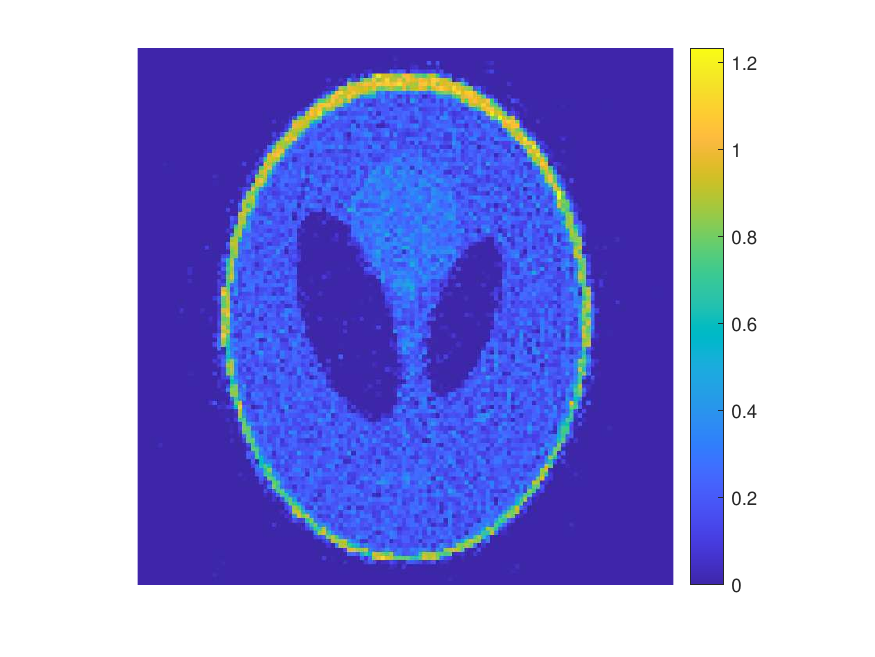}}
\hspace{-1em}
\subfigure[SGD-$\theta$ with r=1.5]{
\includegraphics[width=5cm,height=4cm]{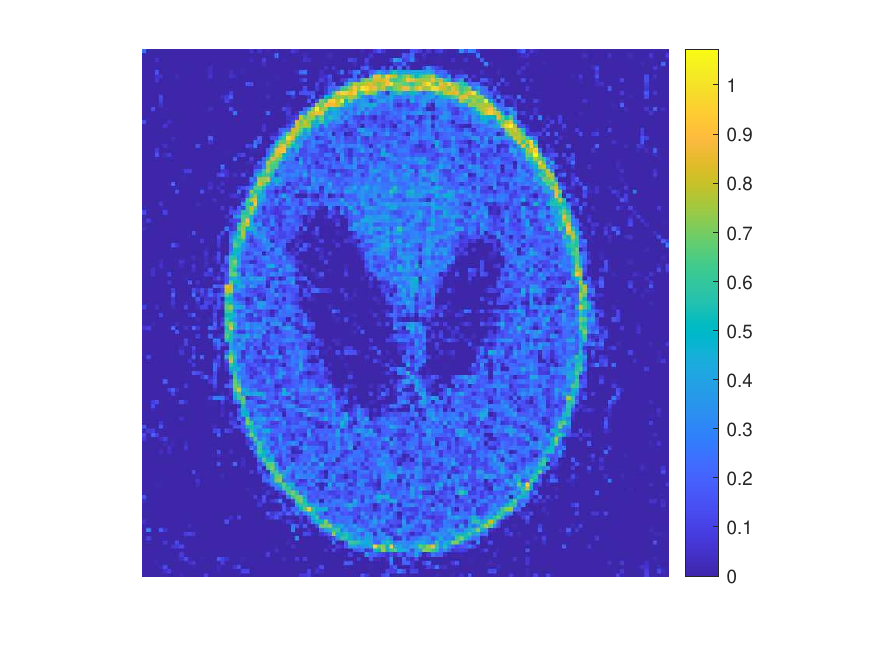}}
\hspace{-1em}
\subfigure[SGD-$\theta$ with r=2]{
\includegraphics[width=5cm,height=4cm]{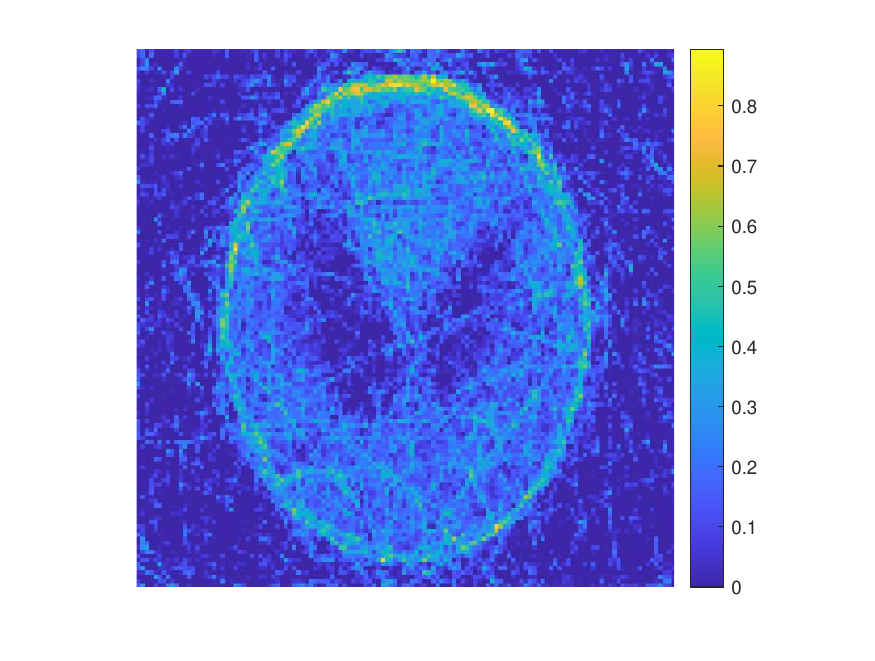}}
\captionsetup{font={small}}
\vspace*{-5pt}
\caption{Reconstruction results for CT by SGD-$\theta$ method with $r=1.1$ (left column), $r=1.5$ (middle column) and $r=2$ (right column) after 10000 iterations. Top row: salt-and-pepper noise with $\kappa=5\%$; Bottom row: salt-and-pepper noise with $\kappa=10\%$.}
\label{ctsaltandpepper_recon}
\end{figure}
Next we consider the situation that the data contains salt-and-pepper noise and investigate the effect of image space $\mathcal{Y}_i=\left( {\mathbb{R},{{\left\|  \cdot  \right\|}_r}} \right)(1<r<\infty)$ by varying the parameter $r$. The contaminated data $y_i^{\delta}$ is formed by
\begin{equation}\label{salt noise}
y_i^\delta  = \left\{ \begin{array}{l}
{y_i},~~~~~\text{with probability}~1 - \kappa, \\[1em]
{y_{\max }},~~\text{with probability} ~\frac{\kappa }{2},\\[1em]
{y_{\min }},~~\text{with probability}~\frac{\kappa }{2},
\end{array} \right.
\end{equation}
where $y_{\max}$ and $y_{\min}$ are the maximum and minimum of the data, respectively, and $\kappa  \in \left( {0,1} \right)$ presents the percentage of the corrupted data points. The noise level $\delta_i$ is defined by ${\delta _i} = {\left\| {y_i^\delta  - {y_i}} \right\|_{{r}}}$, $i=1,2,\ldots,N$. The salt-and-pepper noisy data are displayed in figure \ref{ctsaltandpepper}(a) for $\kappa=5\%$ and figure \ref{ctsaltandpepper}(b) for $\kappa=10\%$, respectively. To examine the impact of image space $\mathcal{Y}_i$, we execute SGD-$\theta$ method using Banach space  $\mathcal{Y}_i=\left( {\mathbb{R},{{\left\|  \cdot  \right\|}_r}} \right)$ with $r=1.1$, $r=1.5$ and $r=2$ for 10000 iterations, respectively. In figure \ref{ctsaltandpepper}(c)-(d) we report the evolution of the relative error ${{{{{\left\| {x_n^\delta  - {x^\dag }} \right\|}_{{2}}}} \mathord{\left/
 {\vphantom {{{{\left\| {x_n^\delta  - {x^\dag }} \right\|}_{{2}}}} {\left\| {{x^\dag }} \right\|}}} \right.
 \kern-\nulldelimiterspace} {\left\| {{x^\dag }} \right\|}}_{{2}}}$ during the first 10000 iterations. It is observed that the relative error of SGD-$\theta$ with $r=1.1$ is the smallest, followed by SGD-$\theta$ with $r=1.5$, then SGD-$\theta$ using $\ell^2$ data fitting term is the largest, after performing the same iterations. To visually compare the quality of the reconstructions, we plot in figure \ref{ctsaltandpepper_recon} the reconstruction results by SGD-$\theta$ method, with $r$ being $1.1, 1.5$ and $2$, after 10000 iterations. One can see that SGD-$\theta$ method using $\ell^{1.1}$ misfit term can captures not only the feature but also the magnitude of the sought image, leading to higher-quality reconstruction results; while the reconstructions obtained by utilizing $r=1.5$ and $r=2$ are less unsatisfactory. It shows that the suitable choice of Banach space $\mathcal{Y}_i$ enables our SGD-$\theta$ method to efficiently eliminate the effect of impulsive noise.
%%%%%%%%%%%%%%%%%%%%%%Table2d%%%%%%%%%%%%%%%%%%%%%%%%%%%%%%%%%%%%%%%%%%%%%%%%%%%%%%%%%%%
\begin{table}[htbp]
\renewcommand\arraystretch{1.2}
\centering
%\captionsetup{font={small}}
\caption{ Numerical results of SGD-$\theta$ method with different batch sizes $N_b$.}
\vspace*{-5pt}
\begin{tabular}{cccccc}
\hline
~$N_b$~~~~~~~~~~~~~~~~~~&~1~~~~~~~~~~~~&~10~~~~~~&160~~~&640~~&~2560\\
\hline
Iteration Number~&~81495~~~~~~&12860~&761~~&250~~&206~~\\
Time(s)~~~~~~~~~~~~~& ~~~22275.1121~&~~~~~1085.8117~&~~~~27.4257~&~~~~13.6905~&~~~~13.8024~\\
\hline
\end{tabular}
\label{batch size}
\end{table}

To investigate the impact of the batch size $N_b$, we use the SGD-$\theta$ method (\ref{minibatch_SGD}) with different batch sizes $N_b$ for solving the CT problem in the case that noisy data is generated by (\ref{salt noise}) with $\kappa=5\%$. For comparison, we fix $\mathcal{Y}_i=\left( {\mathbb{R},{{\left\|  \cdot  \right\|}_{1.1}}} \right)$ and terminate SGD-$\theta$ method as long as the relative error ${{{{{\left\| {x_n^\delta  - {x^\dag }} \right\|}_{{2}}}} \mathord{\left/
 {\vphantom {{{{\left\| {x_n^\delta  - {x^\dag }} \right\|}_{{2}}}} {\left\| {{x^\dag }} \right\|}}} \right.
 \kern-\nulldelimiterspace} {\left\| {{x^\dag }} \right\|}}_{{2}}}$ is less than $10^{-1}$. In \cref{batch size}, we summarize the number of iterations and computational times required for SGD-$\theta$ method, under various values of batch size, to reach the same relative error. It can be seen that the larger the batch size $N_b$ used in each iteration, the fewer iterations are required; however, as $N_b$ increases, SGD-$\theta$ method spends more time calculating $\sum\nolimits_{i \in {I_n}} {{F_{{i }}'}{{\left( {x_n^\delta } \right)}^*}J_r^{{\mathcal{Y}_i}}\left( {{F_i}\left( {x_n^\delta } \right) - y_i^\delta } \right)} $ for updating $\xi_{n+1}^\delta$ per iteration. The choice of batch size for optimal performance deserves further research.
 \subsection{Schlieren Imaging}
Next we consider the problem of determining the 3D pressure fields on cross-section of a water tank generated by an ultrasound transducer from Schlieren data \cite{Hanafy1991}. Mathematically, we need to reconstruct a function $f$ supported on a bounded domain $D\subset \mathbb{R}^2$ from
\[\mathcal{I}f\left( {s,\sigma } \right) = {\left( {\int_{\mathbb{R}} {f\left( {s\sigma  + r{\sigma ^ \bot }} \right)} dr} \right)^2},~~\left( {s,\sigma } \right) \in {\mathbb{R}} \times {{\mathbb{S}^1}}.\]
In  our numerical simulations, we pick $D = \left[ { - 1,1} \right] \times \left[ { - 1,1} \right]$ and reconstruct a function $f$ from $N=100$ recording directions ${\sigma _i} \in {\mathbb{S}^1}, i=1,2,\ldots,N$ uniformly distributed on the circle; that is,
${\sigma _i} = \left( {\cos {\varphi _i},\sin {\varphi _i}} \right),$
where ${\varphi _i} \in \left[ {0,2\pi } \right)$, for $i=1,2,\ldots,N$, is the angle.
For each direction ${\sigma _i} $, we define
\[{F_i}\left( f \right)\left( s \right): = \mathcal{I}f\left( {s,{\sigma _i}} \right): = {\left( {{R_i}f\left( s \right)} \right)^2},~~i=1,2,\ldots,N,\]
where ${R_i}f\left( s \right) = \int_{\mathbb{R}} {f\left( {s{\sigma _i} + r\sigma _i^ \bot } \right)} dr$ is the Radon transform along the direction $\sigma_i$. Let $f^\dag$ be the sought solution and let ${y_i} = {F_i}\left( f^\dag \right)$. Then the reconstruction of $f^\dag$ reduces to solve the nonlinear system of the form (\ref{nonlinear equation}), that is,
\begin{equation}\label{nonsystem}
{F_i}\left( f \right) = {y_i},~i=1,2,\ldots,N
\end{equation}
with $N=100$. From \cite{Haltmeier_20071}, it is known that each ${F_i}: \mathcal{X}=H_0^1\left( D \right) \to \mathcal{Y}_i=L^2\left( {\left[ { - \sqrt 2 ,\sqrt 2 } \right]} \right)$, for $i=1,2,\ldots,N$, is continuous and Fr\'{e}chet differentiable with
\[{F_i'}\left( f \right)h = 2{R_i}f \cdot {R_i}h,~~\forall h \in H_0^1\left( D \right).\]
The adjoint ${F_i'}\left( f \right)^*:{L^2}\left( {\left[ { - \sqrt 2 ,\sqrt 2 } \right]} \right) \to H_0^1\left( D \right)$ is given by
\[{F_i'}\left( f \right)^*g = {\left( {I - \Delta } \right)^{ - 1}}\left( {2{R_i^*}\left( {g{R_i}f} \right)} \right),~~\forall g\in {L^2}\left( {\left[ { - \sqrt 2 ,\sqrt 2 } \right]} \right),\]
where $I$ is the identity operator, $\Delta$ is the Laplace operator and $R_i^*:{L^2}\left( {\left[ { - \sqrt 2 ,\sqrt 2 } \right]} \right)\to$ $ {L^2}\left( D \right)$ is the adjoint of $R_i$ given by $\left( {{R_i^*}g} \right)\left( x \right) = g\left( {\left\langle {x,{\sigma _i}} \right\rangle } \right)$.

We next consider the situation that the data contains uniformly distributed noise and test the performance of SGD-$\theta$ method. The noisy data is generated by
\[
y_i^\delta  = {y_i} + {\delta _{rel}}{\left\| {{y_i}} \right\|_{{L^2}}}{\varepsilon _i},~~i=1,2,\ldots,N,\]
  %%%%%%%%%%%%%%%%%%%%%%%%%%%%%%%%%%%%%%%%%%%%%%%%%%%%%%%%
\begin{figure}[htbp]
\centering
\vspace{-0.35cm} %ÃÃ¨ÃÃÃÃ«ÃÃÃÃ¦ÃÃ½ÃÃÂµÃÂ¾Ã ÃÃ«
\subfigtopskip=2pt %ÃÃ¨ÃÃÃÃÃÂ¼ÃÃ«ÃÃÃÃ¦ÃÃ½ÃÃÂ»Ã²Â±Ã°ÂµÃÃÃÃÃÂµÃÂ¾Ã ÃÃ«
\subfigbottomskip=6pt %ÃÃ¨ÃÃÂµÃÂ¶Ã¾ÃÃÃÃÃÂ¼ÃÃ«ÂµÃÃÂ»ÃÃÃÃÃÂ¼ÂµÃÂ¾Ã ÃÃ«Â£Â¬Â¼Â´ÃÃÃÃ¦ÂµÃÃÂ·ÃÃ«ÃÃÃÃ¦ÂµÃÂ½ÃÂµÃÂ¾Ã ÃÃ«
\subfigcapskip=-5pt %ÃÃ¨ÃÃÃÃÃÂ¼ÃÃ«ÃÃÂ±ÃªÃÃ¢ÃÂ®Â¼Ã¤ÂµÃÂ¾Ã ÃÃ«
\subfigure[$\delta_{rel}=0.05$]{
\includegraphics[width=5cm,height=4cm]{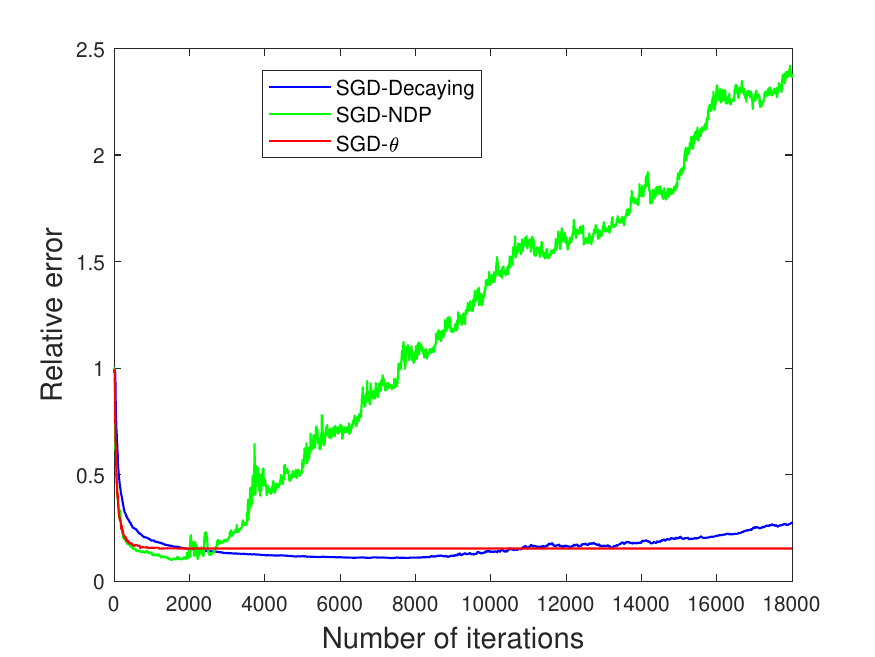}}
\hspace{-1em}
\subfigure[$\delta_{rel}=0.01$]{
\includegraphics[width=5cm,height=4cm]{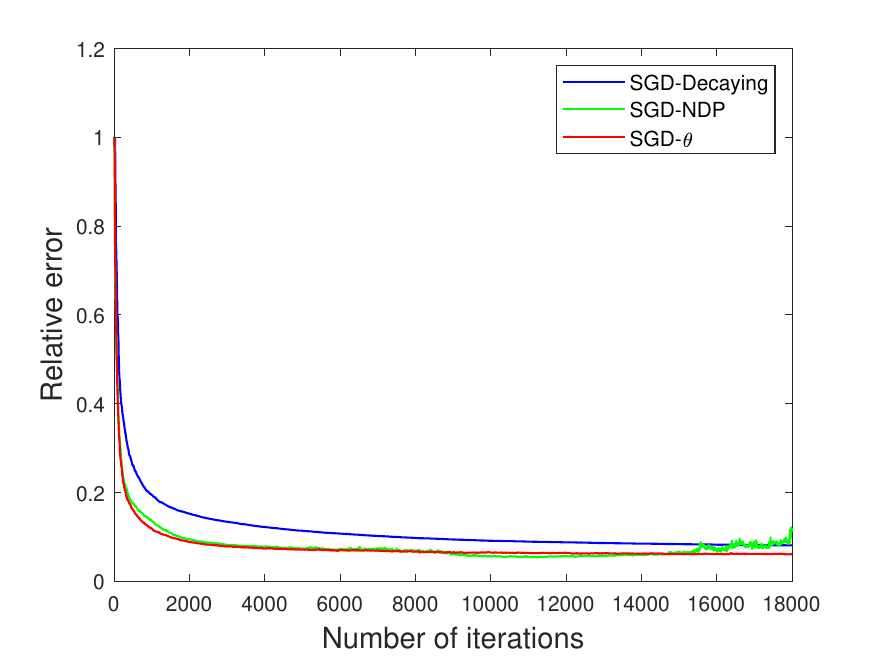}}
\hspace{-1em}
\subfigure[$\delta_{rel}=0.005$]{
 \includegraphics[width=5cm,height=4cm]{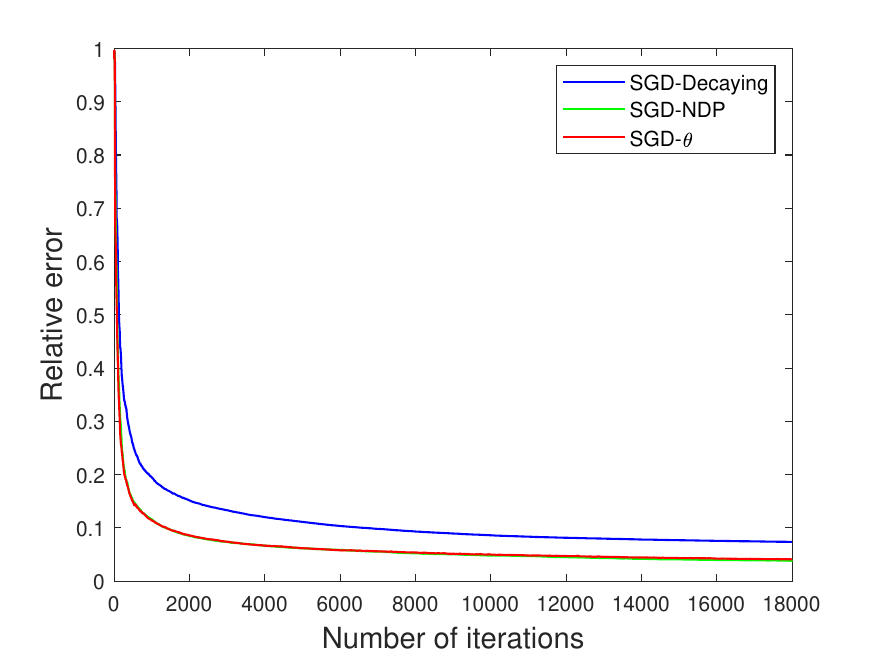}}\\[-2.5mm]
\captionsetup{font={small}}
\vspace*{-5pt}
\caption{Computational results for Schlieren imaging under uniformly distributed noise.}
\label{schgaussian}
\end{figure}
%%%%%%%%%%%%%%%%%%%%%%%%%%%%%%%%%%%%%%%%%%%%%%%%%%%%%%%%%%%%%%%%%%%%%%%%%%%%%%
where ${\varepsilon _i}$s are uniform noise distributed on $[-1,1]$ and the noise level is ${\delta _i} = {\delta _{rel}}{\left\| {{y_i}} \right\|_{{L^2}}},i=1,2,\ldots,N$. Assume that the sought solution $f^\dag$ is piecewise constant, whose graph together with the exact data are shown in figure \ref{schre}.  In order to carry out the computation, we take $120\times 120$ grid points uniformly distributed over $D$ and pick $\xi_0=0.01$ and $f_{0}  = \arg \mathop {\min }\nolimits_{x \in \mathcal{X}} \left\{ {\theta \left( x \right) - \left\langle {\xi _0 ,x} \right\rangle } \right\}$ as the initial guess. When implementing SGD-$\theta$ method, we use (\ref{thetatv}) with $\beta=5$ to capture the features of $f^\dag$ and choose $\mu_0={{0.01} \mathord{\left/
 {\vphantom {{0.01} \beta }} \right.
 \kern-\nulldelimiterspace} \beta }$, $\mu_1=10^6$ and $\tau=1.02$; we take $t_0=10^6$ for performing SGD-Decaying method. In figure \ref{schgaussian} we report the evolution of the reconstruction error ${{{{{\left\| {f_n^\delta  - {f^\dag }} \right\|}_{{L^2}}}} \mathord{\left/
 {\vphantom {{{{\left\| {x_n^\delta  - {x^\dag }} \right\|}_{{L^2}}}} {\left\| {{x^\dag }} \right\|}}} \right.
 \kern-\nulldelimiterspace} {\left\| {{f^\dag }} \right\|}}_{{L^2}}}$ of SGD-$\theta$ method during the first 18000 iterations under three different relative noise levels $\delta_{rel}=0.05, 0.01, 0.005$; for comparison, we also present the results obtained by using SGD-Decaying and SGD-NDP methods. One can observe that SGD-Decaying and SGD-NDP methods admit semi-convergence property and the iterates exhibit dramatic oscillations, particularly in the case of relatively large noise level; while such semi-convergence property and oscillations could be dramatically reduced by utilizing our SGD-$\theta$ method, which obviously shows the superiority of the step size chosen by (\ref{3choicestep}).
 To visualize the performance of SGD-$\theta$ method, we depict in figure \ref{schre}(b) the noisy data with relative noise level $\delta_{rel}=0.005$ and (d) the reconstruction result of SGD-$\theta$ after 18000 iterations. The results in figure \ref{schre} clearly demonstrate the effectiveness of our SGD-$\theta$ method in handling the uniformly distributed noise and the capacity of SGD-$\theta$ method for capturing the piecewise constant feature of the sought solution. We note that SGD-$\theta$ method performs well for solving (\ref{nonsystem}), even if the condition (\ref{TCC}) in Assumption \ref{5asumption operator} cannot be verified.
 %%%%%%%%%%%%%%%%%%%%%%%%%%%%%%%%%%%%%%%%%%%%%%%%%%%%%%%%%%%%%%  %%%%%%%%%%%%%%%%%%%%%%%%%%%%%%%%%%%%%%%%%%%%%%%%%%%%%%%%
\begin{figure}[htbp]
\centering
\vspace{-0.35cm} %ÃÃ¨ÃÃÃÃ«ÃÃÃÃ¦ÃÃ½ÃÃÂµÃÂ¾Ã ÃÃ«
\subfigtopskip=2pt %ÃÃ¨ÃÃÃÃÃÂ¼ÃÃ«ÃÃÃÃ¦ÃÃ½ÃÃÂ»Ã²Â±Ã°ÂµÃÃÃÃÃÂµÃÂ¾Ã ÃÃ«
\subfigbottomskip=6pt %ÃÃ¨ÃÃÂµÃÂ¶Ã¾ÃÃÃÃÃÂ¼ÃÃ«ÂµÃÃÂ»ÃÃÃÃÃÂ¼ÂµÃÂ¾Ã ÃÃ«Â£Â¬Â¼Â´ÃÃÃÃ¦ÂµÃÃÂ·ÃÃ«ÃÃÃÃ¦ÂµÃÂ½ÃÂµÃÂ¾Ã ÃÃ«
\subfigcapskip=-5pt %ÃÃ¨ÃÃÃÃÃÂ¼ÃÃ«ÃÃÂ±ÃªÃÃ¢ÃÂ®Â¼Ã¤ÂµÃÂ¾Ã ÃÃ«
\subfigure[Exact data]{
\includegraphics[scale=0.5]{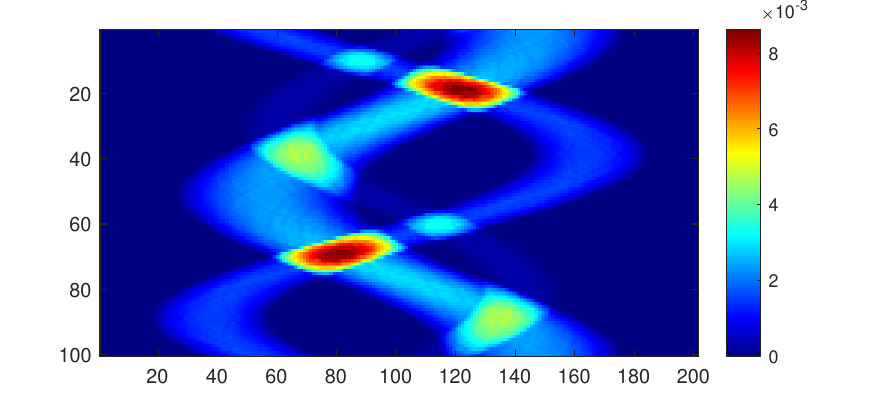}}
\hspace{-1em}
\subfigure[Noisy data ($\delta_{rel}=0.005$)]{
\includegraphics[scale=0.5]{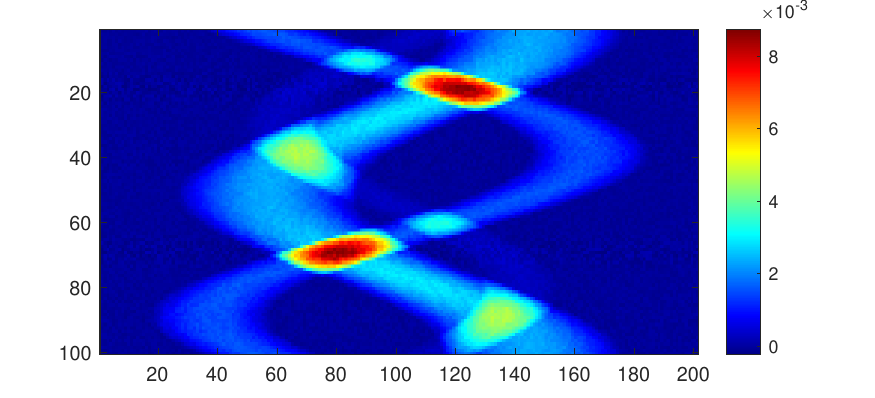}}\\[-2.5mm]
\captionsetup{font={small}}
\subfigure[Exact solution]{
 \includegraphics[scale=0.5]{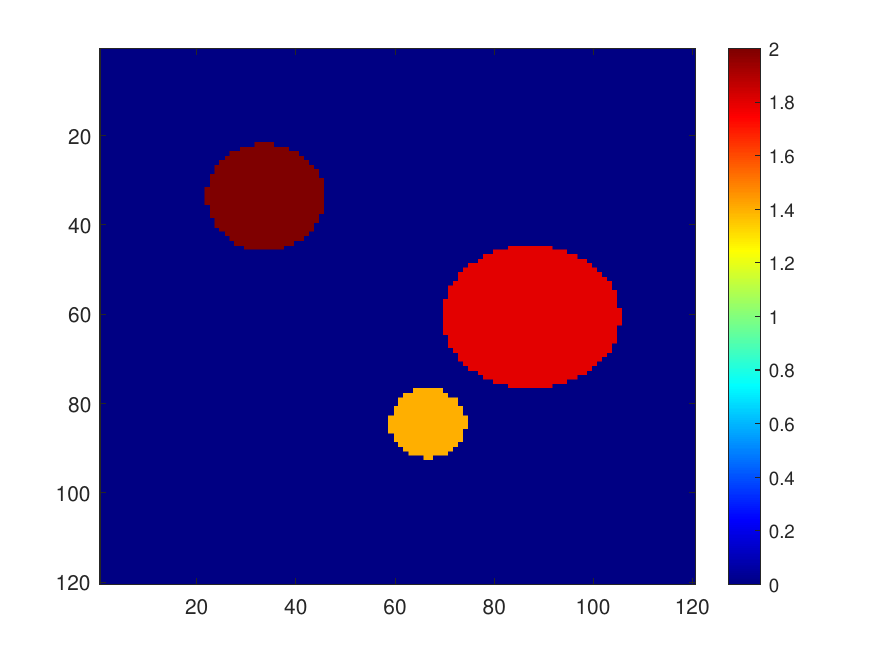}}
 \hspace{-1em}
\subfigure[Reconstruction by SGD-$\theta$]{
\includegraphics[scale=0.5]{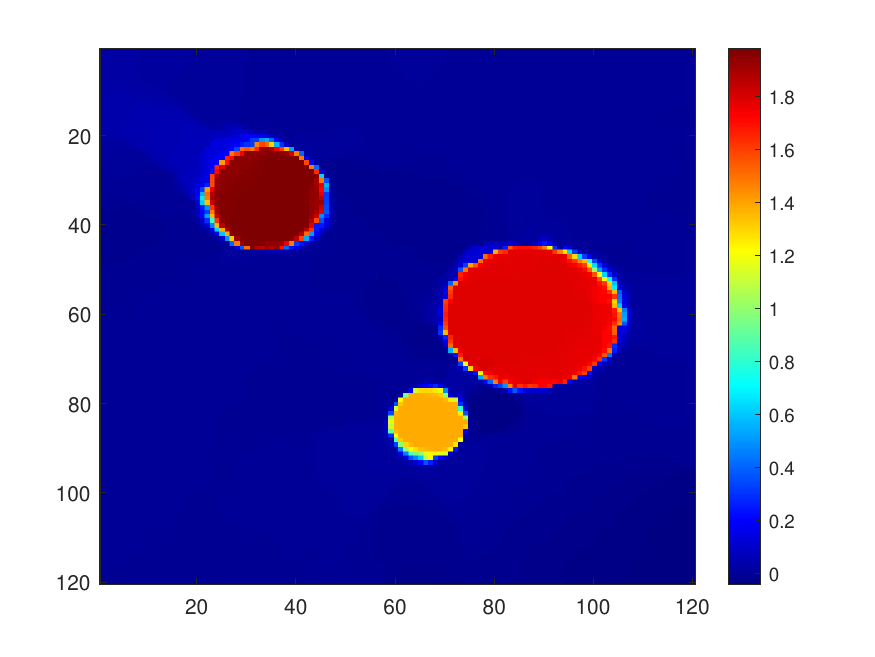}}
\vspace*{-5pt}
\caption{Results for Schlieren imaging under uniformly distributed noise with $\delta_{rel}=0.005$.}
\label{schre}
\end{figure}
%%%%%%%%%%%%%%%%%%%%%%%%%%%%%%%%%%%%%%%%%%%%%%%%%%%%%%%%%%%%%%%%%%%%%%%%%%%%%%%%%%%%%%%%%%%%%%%%%%%%%%

In summary, from the above numerical results, it can be concluded that our SGD-$\theta$ method could efficiently eliminate different types of noise by appropriately selecting the Banach space $\mathcal{Y}_i$, and turned out to be efficient for solving linear as well as nonlinear systems, which generalizes the works \cite{JinQ_2023,JinBker2023} to cover nonlinear inverse problems and to deal with the case that the data is contaminated by impulsive noise. Furthermore, SGD-$\theta$ method outperforms SGD-Decaying and SGD-NDP methods, which validates the effectiveness of the step size (\ref{3choicestep}).

\section{Conclusion and outlook}\label{conclusion}
In this paper, we have investigated and analyzed the stochastic gradient descent method with convex penalty for solving linear and nonlinear inverse problems in Banach spaces. Under the traditional assumptions on the operators $F_i$ and suitable choice of the step size, by using techniques from regularization theory in Banach spaces and stochastic analysis, we have established the regularization property of the method equipped with an {\it a priori} stopping rule. The numerical simulations on computed tomography and schlieren imaging clearly demonstrate the superiority of step size schedule and the effectiveness of the method in dealing with the data containing various types of noise. Moreover, the method turned out to be feasible for solving large-scale systems of linear and nonlinear ill-posed equations.

There are several possible lines for future research.  First, it is of much interest to analyze the convergence rates of SGD-$\theta$ method by employing variational source conditions or conditional stability estimates \cite{Cheng-YAMAMOTO-2000,Fu-Jin-2020}. Second, the assumptions, especially the tangential cone condition (\ref{TCC}) for the forward operators $F_i$, should be verified for specific nonlinear inverse problems; and the Fr\'{e}chet differentiability of the forward operators in Assumption \ref{5asumption operator} may be relaxed \cite{2019-Clason-p789-832}.

\ack{R Gu is partially supported by the National Natural Science Foundation of China (No. 12301535) and and China Postdoctoral Science Foundation (No. 2024M750292). Z Fu is partially supported by the National Natural Science Foundation of China (No. 12101159) and China Postdoctoral Science Foundation (No. 2022M710969). B Han is partially supported by the National Natural Science Foundation of China (No. 12271129). H Fu is partially supported by the National Natural Science Foundation
of China (No. 42274166).}
\appendix
%\begin{appendices}

\section{SGD method under a posteriori stopping rule}\label{appendix_sgd}
\renewcommand{\thesection}{A}
In section \ref{3themethod}, we have shown the convergence results of SGD-$\theta$ method under an {\it a priori} stopping rule. The issue of the {\it a posteriori} stopping rule for stochastic iterative methods is desirable and challenging, even for Hilbert spaces \cite{Jahn-Jin-2020,Jin-Chen-2024,Jin-Liu-2024}. Recently, an {\it a posteriori} stopping rule was considered in \cite{Jin-Chen-2024} for stochastic variance reduced gradient method and in \cite{Jin-Liu-2024} for randomized block coordinate descent method for linear inverse problems in Hilbert spaces. Motivated by the work in \cite{Jin-Chen-2024,Jin-Liu-2024}, in this appendix we design an {\it a posteriori} stopping rule for SGD-$\theta$ method and show its finite-iteration termination property. We consider the problems (\ref{nonlinear equation}), where $F_i:\mathscr{D}(F_i)\subset \mathcal{X} \rightarrow {\mathcal{Y}_i}$ are linear or nonlinear operators from Banach spaces $\mathcal{X}$ to Hilbert spaces ${\mathcal{Y}_i}$ with domain $\mathscr{D}(F_i)$. Suppose that the function $\theta :\mathcal{X} \to \left( { - \infty ,\infty } \right]$ is  proper, lower semi-continuous, 2-convex satisfying (\ref{theta p-convex}) for some constant $\sigma>0$. The corresponding SGD-$\theta$ method can be formulated as
\begin{equation}\label{sgd_posteriori}
\begin{array}{l}
\xi _{n + 1}^\delta  = \xi _n^\delta  - t_n^\delta {F_{{i_n}}'}\left( {x_n^\delta } \right)^*\left( {{F_{{i_n}}}\left( {x_n^\delta } \right) - y_{{i_n}}^\delta } \right),\\[1mm]
x_{n + 1}^\delta  = \arg \mathop {\min }\limits_{x \in \mathcal{X}} \left\{ {\theta \left( x \right) - \left\langle {\xi _{n + 1}^\delta ,x} \right\rangle } \right\},
\end{array}
\end{equation}
where ${i_n} \in \left\{ {1,2, \cdots ,N} \right\}$ is selected uniformly at random. The step size $t_{{n}}^\delta$ is taken as the constant step size, given by $t_n^\delta=\bar{t} \gamma_n$, where
\begin{equation}\label{step_posteriori}
0< \bar{t} \le\min \left\{ {\frac{{{\mu _0}{{\left\| {{F_{{i_n}}}\left( {x_n^\delta } \right) - y_{{i_n}}^\delta } \right\|}^2}}}{{{{\left\| {{F_{{i_n}}'}{{\left( {x_n^\delta } \right)}^*}\left( {{F_{{i_n}}}\left( {x_n^\delta } \right) - y_{{i_n}}^\delta } \right)} \right\|}^2}}},{\mu _1}} \right\}
\end{equation}
and
\[{\gamma _n} = \left\{ \begin{array}{l}
1,~~~~\sum\limits_{i = 1}^N {{{\left\| {{F_i}\left( {x_n^\delta } \right) - y_i^\delta } \right\|}^2}}  > \sum\limits_{i = 1}^N {{{\left( {\tau {\delta _i}} \right)}^2}} \\[1mm]
0,~~~~otherwise
\end{array} \right.\]
for some constant $\tau>1$.

Note that the SGD-$\theta$ method (\ref{sgd_posteriori})-(\ref{step_posteriori}) defines an infinite sequence
$\left\{ {\left( \xi _n^\delta ,x_n^\delta \right)} \right\}$ by incorporating the parameter $\gamma_n$, which is for the convenience of analysis. In the numerical simulations, the iteration  (\ref{sgd_posteriori}) can be stopped as long as
\begin{equation}\label{stopping rule}
\sum\limits_{i = 1}^N {{{\left\| {{F_i}\left( {x_{{n_\delta }}^\delta } \right) - y_i^\delta } \right\|}^2}}  \le \sum\limits_{i = 1}^N {{{\left( {\tau {\delta _i}} \right)}^2}}  < \sum\limits_{i = 1}^N {{{\left\| {{F_i}\left( {x_n^\delta } \right) - y_i^\delta } \right\|}^2}},~~0\le n<n_\delta,
\end{equation}
where the stopping index $n_\delta$ depends crucially on the sample path and thus is a random integer.
Next we show that the stopping rule (\ref{stopping rule}) terminates the SGD-$\theta$ method (\ref{sgd_posteriori}) after finite steps almost surely.
\begin{proposition}
If $\mu_0>0$ and $\tau>1$ are chosen such that
\[c_3:={1 - \eta  - \frac{{{\mu _0}}}{{4\sigma }} - \frac{{1 + \eta }}{2} - \frac{{1 + \eta }}{{2{\tau ^2}}}}>0,\]
then for any solution $\hat x$ of (\ref{nonlinear equation}) in ${B_{2\rho} }\left( {{x_0}} \right)\cap \mathscr{D}\left( \theta  \right)$, there holds
\begin{equation}\label{exmon_posteriori}
\mathbb{E}\left[ {{D_{\xi _{n + 1}^\delta }}\theta \left( {\hat x,x_{n + 1}^\delta } \right)} \right] -\mathbb{E}\left[ {{D_{\xi _n^\delta }}\theta \left( {\hat x,x_n^\delta } \right)} \right]\le - \frac{{{c_3}}}{N}\mathbb{E}\left[ {t_n^\delta \sum\limits_{i = 1}^n{{{\left\| {{F_i}\left( {x_n^\delta } \right) - y_i^\delta } \right\|}^2}} } \right]
\end{equation}
for $n\ge 0$. Moreover, SGD-$\theta$ method stops in finite steps almost surely.
\end{proposition}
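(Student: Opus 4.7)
The plan is to mimic the per-step Bregman descent analysis of Lemma \ref{smonotoniciy}, specialized to the present Hilbert setting ($p=r=2$), and then to take conditional expectations so as to exploit the global discrepancy threshold encoded in $\gamma_n$. The essential new feature, relative to Lemma \ref{smonotoniciy}, is that the discrepancy principle is now enforced globally on the total residual rather than per equation, so the noise-induced error in the descent inequality cannot be cancelled term by term; the cancellation will only appear after averaging over $i_n$.

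First I would derive a per-step estimate. Using Lemma \ref{lemma2.2}(ii) with $p^{*}=2$, the Hilbert structure of $\mathcal{Y}_{i_n}$ (so that $J_{2}^{\mathcal{Y}_{i_n}}$ is the Riesz isomorphism), the bound (\ref{step_posteriori}) on $t_n^\delta$, and the tangential cone condition together with $\|y_{i_n}^\delta - y_{i_n}\| \le \delta_{i_n}$, one arrives at
\begin{equation*}
D_{\xi_{n+1}^\delta}\theta(\hat x, x_{n+1}^\delta) - D_{\xi_n^\delta}\theta(\hat x, x_n^\delta) \le t_n^\delta\left[\frac{\mu_0}{4\sigma}\|r_{i_n}\|^2 + (1+\eta)\|r_{i_n}\|\delta_{i_n} - (1-\eta)\|r_{i_n}\|^2\right],
\end{equation*}
where $r_{i_n} := F_{i_n}(x_n^\delta) - y_{i_n}^\delta$; this is the direct Hilbert analogue of (\ref{bregmantui})--(\ref{bregdescent}). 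Applying Young's inequality $(1+\eta)\|r_{i_n}\|\delta_{i_n} \le \tfrac{1+\eta}{2}(\|r_{i_n}\|^2 + \delta_{i_n}^2)$ and regrouping so that the coefficient of $\|r_{i_n}\|^2$ matches $-c_3$ yields the crucial bound
\begin{equation*}
D_{\xi_{n+1}^\delta}\theta(\hat x, x_{n+1}^\delta) - D_{\xi_n^\delta}\theta(\hat x, x_n^\delta) \le -c_3\, t_n^\delta \|r_{i_n}\|^2 + \frac{(1+\eta)\,t_n^\delta}{2}\left[\delta_{i_n}^2 - \frac{\|r_{i_n}\|^2}{\tau^2}\right].
\end{equation*}

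Next, since $\gamma_n$ is $\mathcal{F}_n$-measurable and $\bar t$ is a constant, $t_n^\delta = \bar t\, \gamma_n$ factors out of the conditional expectation, which merely averages the terms in $i_n$ and gives
\begin{equation*}
\mathbb{E}\left[D_{\xi_{n+1}^\delta}\theta(\hat x, x_{n+1}^\delta) - D_{\xi_n^\delta}\theta(\hat x, x_n^\delta)\mid \mathcal{F}_n\right] \le -\frac{c_3\, t_n^\delta}{N}\sum_{i=1}^N \|r_i\|^2 + \frac{(1+\eta)\,t_n^\delta}{2N}\sum_{i=1}^N\left[\delta_i^2 - \frac{\|r_i\|^2}{\tau^2}\right].
\end{equation*}
The decisive point is that the second term on the right is always nonpositive: if $\gamma_n = 0$ then $t_n^\delta = 0$, and if $\gamma_n = 1$ then by the very definition of $\gamma_n$ we have $\sum_i \|r_i\|^2 > \tau^2\sum_i \delta_i^2$, so the bracketed sum is negative. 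This is precisely where the \emph{global} nature of the discrepancy principle is exploited, replacing the per-equation cancellation used in Lemma \ref{smonotoniciy}. Taking full expectation yields (\ref{exmon_posteriori}); the iterate confinement $x_n^\delta \in B_{2\rho}(x_0)$ needed to invoke TCC is maintained by an induction in expectation, using $\mathbb{E}[D_{\xi_n^\delta}\theta(x^\dag,x_n^\delta)] \le D_{\xi_0}\theta(x^\dag,x_0) \le \sigma\rho^2$ together with $2$-convexity.

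For finite termination, I would sum (\ref{exmon_posteriori}) over $n \ge 0$ and use $\mathbb{E}[D_{\xi_n^\delta}\theta(\hat x,x_n^\delta)] \ge 0$ to get $\sum_{n \ge 0} \mathbb{E}[t_n^\delta \sum_i \|F_i(x_n^\delta) - y_i^\delta\|^2] \le N\,D_{\xi_0}\theta(\hat x,x_0)/c_3 < \infty$. On the event $\{\gamma_n = 1\}$, this summand is bounded below by $\bar t\,\tau^2 \sum_i \delta_i^2$, which is strictly positive whenever the data are genuinely noisy, so $\sum_n \mathbb{P}(\gamma_n = 1) < \infty$. By Borel--Cantelli, $\gamma_n = 1$ occurs only finitely often almost surely; moreover whenever $\gamma_n = 0$, (\ref{sgd_posteriori}) gives $\xi_{n+1}^\delta = \xi_n^\delta$ and hence $x_{n+1}^\delta = x_n^\delta$, so the iterates freeze at the first $n$ with $\gamma_n = 0$, which is exactly the stopping time in (\ref{stopping rule}). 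The main obstacle I anticipate is the confinement step: unlike in Lemma \ref{smonotoniciy}, the global discrepancy rule does not yield almost-sure path-wise descent of the Bregman distance (on individual paths the $\delta_{i_n}^2$ term can outweigh $\|r_{i_n}\|^2/\tau^2$), so the standard inductive argument for $x_n^\delta \in B_{2\rho}(x_0)$ must be replaced either by an in-expectation induction combined with $2$-convexity, or by a stopping-time truncation that halts the iteration at the boundary of $B_{2\rho}(x_0)$.
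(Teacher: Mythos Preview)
Your argument for the descent inequality (\ref{exmon_posteriori}) is essentially identical to the paper's: the paper writes the per-step bound after Young's inequality as
\[
-\Bigl(1-\eta-\tfrac{\mu_0}{4\sigma}-\tfrac{1+\eta}{2}\Bigr)t_n^\delta\|r_{i_n}\|^2+\tfrac{1+\eta}{2}t_n^\delta\delta_{i_n}^2,
\]
takes conditional expectation, and then absorbs the noise term via $t_n^\delta\sum_i\delta_i^2<t_n^\delta\tau^{-2}\sum_i\|r_i\|^2$; your regrouping so that $-c_3\|r_{i_n}\|^2$ appears before averaging is algebraically equivalent. For finite termination the paper works directly with the event $\Psi=\{\gamma_n=1\text{ for all }n\}$ and shows $\mathbb{P}(\Psi)=0$ from the summability estimate, whereas you invoke Borel--Cantelli on $\{\gamma_n=1\}$; since the iterates freeze at the first $n$ with $\gamma_n=0$ (as you note), the two arguments coincide and Borel--Cantelli is a slight overkill.

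Your concern about confinement is well placed. The paper simply appeals to ``the argument analogous to the proof of Lemma~\ref{smonotoniciy}'' without addressing the point you raise: under the global discrepancy rule the per-path Bregman descent that drove the $x_n^\delta\in B_{2\rho}(x_0)$ induction in Lemma~\ref{smonotoniciy} is no longer available, because on a given path the term $\tfrac{1+\eta}{2}\delta_{i_n}^2$ need not be dominated by $\tfrac{1+\eta}{2\tau^2}\|r_{i_n}\|^2$. The paper does not supply a substitute argument, so your in-expectation remedy is not weaker than what the paper actually proves; be aware, however, that an expectation bound $\mathbb{E}[\|x_n^\delta-x^\dag\|^2]\le\rho^2$ does not by itself yield the almost-sure containment needed to invoke the tangential cone condition, so a stopping-time truncation (your second option) is the cleaner route if one wants a fully rigorous statement in the nonlinear case.
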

\begin{proof}
The argument below is inspired by  \cite[Proposition 5.1]{Jin-Chen-2024} and \cite[Theorem 3.5]{Jin-Liu-2024}. By utilizing the argument analogous to the proof of Lemma \ref{smonotoniciy}, we can deduce that
\[\begin{array}{*{20}{l}}
{\quad {D_{\xi _{n + 1}^\delta }}\theta \left( {\hat x,x_{n + 1}^\delta } \right) - {D_{\xi _n^\delta }}\theta \left( {\hat x,x_n^\delta } \right)}\\[1mm]
{ \le  - \left( {1 - \eta  - \frac{{{\mu _0}}}{{4\sigma }}} \right)t_n^\delta {{\left\| {{F_{{i_n}}}\left( {x_n^\delta } \right) - y_{{i_n}}^\delta } \right\|}^2} + \left( {1 + \eta } \right)t_n^\delta \left\| {{F_{{i_n}}}\left( {x_n^\delta } \right) - y_{{i_n}}^\delta } \right\|{\delta _{{i_n}}}.}
\end{array}\]
By using Young's inequality ($ab \le \frac{{{a^2}}}{2} + \frac{{{b^2}}}{2}$), we can derive that
\[\begin{array}{*{20}{l}}
{\quad {D_{\xi _{n + 1}^\delta }}\theta \left( {\hat x,x_{n + 1}^\delta } \right) - {D_{\xi _n^\delta }}\theta \left( {\hat x,x_n^\delta } \right)}\\[1mm]
{ \le  - \left( {1 - \eta  - \frac{{{\mu _0}}}{{4\sigma }}} \right)t_n^\delta {{\left\| {{F_{{i_n}}}\left( {x_n^\delta } \right) - y_{{i_n}}^\delta } \right\|}^2} + \frac{{1 + \eta }}{2}t_n^\delta {{\left\| {{F_{{i_n}}}\left( {x_n^\delta } \right) - y_{{i_n}}^\delta } \right\|}^2} + \frac{{1 + \eta }}{2}t_n^\delta {{\left( {{\delta _{{i_n}}}} \right)}^2}.}
\end{array}\]
Taking the expectation conditioned on $\mathcal{F}_n$, there  follows
\begin{equation}\label{post_555}
\begin{array}{*{20}{l}}
{\quad \mathbb{E}\left[ {{D_{\xi _{n + 1}^\delta }}\theta \left( {\hat x,x_{n + 1}^\delta } \right) - {D_{\xi _n^\delta }}\theta \left( {\hat x,x_n^\delta } \right)|\mathcal{F}_n} \right]}\\[1mm]
{ \le  - \left( {1 - \eta  - \frac{{{\mu _0}}}{{4\sigma }}-\frac{{1 + \eta }}{2}} \right)t_n^\delta \frac{1}{N}\sum\limits_{i = 1}^N {{{\left\| {{F_i}\left( {x_n^\delta } \right) - y_i^\delta } \right\|}^2}}  + \frac{{1 + \eta }}{2}t_n^\delta \frac{1}{N}\sum\limits_{i = 1}^N {{{\left( {{\delta _i}} \right)}^2}} .}
\end{array}
\end{equation}
By the definition of $t_n^\delta$, there holds
\[t_n^\delta \sum\limits_{i = 1}^N {{{\left( {{\delta _i}} \right)}^2}}  < t_n^\delta \frac{1}{{{\tau ^2}}}\sum\limits_{i = 1}^N {{{\left\| {{F_i}\left( {x_n^\delta } \right) - y_i^\delta } \right\|}^2}}. \]
Inserting the above inequality into (\ref{post_555}), and by taking the full expectation, we  derive that
\[\begin{array}{*{20}{l}}
{\quad \mathbb{E}\left[ {{D_{\xi _{n + 1}^\delta }}\theta \left( {\hat x,x_{n + 1}^\delta } \right)} \right] - \mathbb{E}\left[ {{D_{\xi _n^\delta }}\theta \left( {\hat x,x_n^\delta } \right)} \right]}\\[1mm]
{ \le  - \left( {1 - \eta  - \frac{{{\mu _0}}}{{4\sigma }} - \frac{{1 + \eta }}{2}-\frac{{1 + \eta }}{{2{\tau ^2}}}} \right)\frac{1}{N}\mathbb{E}\left[ {t_n^\delta \sum\limits_{i = 1}^n{{{\left\| {{F_i}\left( {x_n^\delta } \right) - y_i^\delta } \right\|}^2}} } \right]},
\end{array}\]
from which  (\ref{exmon_posteriori}) follows.

We next consider the event
\[\Psi : = \left\{ {\sum\limits_{i = 1}^N {{{\left\| {{F_i}\left( {x_n^\delta } \right) - y_i^\delta } \right\|}^2}}  > \sum\limits_{i = 1}^N {{{\left( {\tau {\delta _i}} \right)}^2}}~\text{for~all~integer}~ n \ge 0} \right\}.\]
It is enough to show $\mathbb{P}\left( \Psi  \right) = 0$. By virtue of (\ref{exmon_posteriori}), we have with $\hat x=x^\dag$ that
\[{\frac{{{c_3}}}{N}\mathbb{E}\left[ {t_n^\delta \sum\limits_{i = 1}^N {{{\left\| {{F_i}\left( {x_n^\delta } \right) - y_i^\delta } \right\|}^2}} } \right] \le \mathbb{E}\left[ {{D_{\xi _n^\delta }}\theta \left( {{x^\dag },x_n^\delta } \right)} \right] - \mathbb{E}\left[ {{D_{\xi _{n + 1}^\delta }}\theta \left( {{x^\dag },x_{n + 1}^\delta } \right)} \right]}\]
and thus for any integer $l\ge 0$ that
\begin{equation}\label{summary}
\frac{{{c_3}}}{N}\sum\limits_{n = 0}^l {\mathbb{E}\left[ {t_n^\delta \sum\limits_{i = 1}^N {{{\left\| {{F_i}\left( {x_n^\delta } \right) - y_i^\delta } \right\|}^2}} } \right]}  \le {D_{{\xi _0}}}\theta \left( {{x^\dag },{x_0}} \right) < \infty.
 \end{equation}
Let ${\chi _\Psi }$ denote the characteristic function of $\Psi$, that is ${\chi _\Psi }\left( w \right)=1$ if $w \in \Psi$, and 0 otherwise, by the definition of $t_n^\delta$, we then have
\[\begin{array}{l}
\mathbb{E}\left[ {t_n^\delta \sum\limits_{i = 1}^N {{{\left\| {{F_i}\left( {x_n^\delta } \right) - y_i^\delta } \right\|}^2}} } \right] \ge \mathbb{E}\left[ {t_n^\delta \sum\limits_{i = 1}^N {{{\left\| {{F_i}\left( {x_n^\delta } \right) - y_i^\delta } \right\|}^2}{\chi _\Psi }} } \right] \\[1mm]
\ge \bar{t}\sum\limits_{i = 1}^N {{{\left( {\tau {\delta _i}} \right)}^2}} \mathbb{E}\left[ {{\chi _\Psi }} \right]
 = \bar{t}\sum\limits_{i = 1}^N {{{\left( {\tau {\delta _i}} \right)}^2}\mathbb{P}} \left( \Psi  \right).
\end{array}\]
The combination of the above inequality with (\ref{summary}) yields
\[\bar{t}\sum\limits_{i = 1}^N {{{\left( {\tau {\delta _i}} \right)}^2}} \frac{{{c_3}}}{N}\left( {l+1} \right)\mathbb{P}\left( \Psi  \right) < {D_{{\xi _0}}}\theta \left( {{x^\dag },{x_0}} \right) < \infty \]
for all $l \ge 0$ and thus $\mathbb{P}\left( \Psi  \right)\to 0$ as $l \to  \infty$. Therefore, $\mathbb{P}\left( \Psi  \right) = 0$.
\end{proof}
\begin{remark}
The above proposition demonstrates that the stopping rule (\ref{stopping rule}) terminates the SGD-$\theta$ method (\ref{sgd_posteriori}) with constant step size in finite iteration steps almost surely. We note that the computation of $\sum\nolimits_{i = 1}^N {{{\left\| {{F_i}\left( {x_n^\delta } \right) - y_i^\delta } \right\|}^2}}$ at each iteration is expensive, the design of the computationally efficient stopping rule and the convergence analysis of the iterates under the posteriori stopping rule will be the future work.
\end{remark}
%\end{appendices}
\section*{References}

\bibliographystyle{unsrt}
%\bibliography{reference}
%\bibliography{references}
%\bibliography{reference}

\begin{thebibliography}{10}

\bibitem{1967-Bregman-p200-217}
L. M. Bregman.
\newblock {\em The relaxation method of finding the common point of convex sets and its application to the solution of problems in convex programming}.
\newblock USSR Computational Mathematics and Mathematical Physics,
  7(1967), pp. 200--217.

\bibitem{Engl1996Regularization}
H. W. Engl, M. Hanke and A. Neubauer.
\newblock {\em Regularization of inverse problems}.
\newblock Kluwer Academic Pub, 1996.

\bibitem{Hanke1995A}
M. Hanke, A. Neubauer, and O. Scherzer.
\newblock {\em A convergence analysis of the Landweber iteration for nonlinear ill-posed problems}.
\newblock Numerische Mathematik, 72(1995), pp. 21-37.

\bibitem{Hofmann2007}
B. Hofmann, B. Kaltenbacher, C. P$\ddot{o}$schl, and O. Scherzer.
\newblock {\em A convergence rates result for Tikhonov regularization in Banach spaces with non-smooth operators}.
\newblock Inverse Problems, 23(2007), pp. 987-1010.

\bibitem{Engl1989}
H. W. Engl, K. Kunisch, and A. Neubauer.
\newblock {\em Convergence rates for Tikhonov regularisation of non-linear ill-posed problems}.
\newblock Inverse Problems, 5(1989), pp. 523-540.

\bibitem{BARBARAFTIN}
 B. Kaltenbacher, F. Sch\"{o}pfer, and T. Schuster.
\newblock {\em Iterative methods for nonlinear ill-posed problems in Banach spaces: convergence and applications to parameter identification problems}.
\newblock Inverse Problems, 25(2009), 065003.

\bibitem{Bo2011Iterative}
R. I. Bo\c{t} and T. Hein.
\newblock {\em Iterative regularization with a general penalty term-theory and
  application to $L^1$ and TV regularization}.
\newblock Inverse Problems, 28(2011), 104010.

\bibitem{Zhong2019}
 M. Zhong, W. Wang and Q. Jin.
\newblock {\em Regularization of inverse problems by two-point gradient methods in Banach spaces}.
\newblock Numerische Mathematik, 143(2019), pp. 713--747.

\bibitem{KNS2008}
 { B. Kaltenbacher, A. Neubauer, and O. Scherzer.}
\newblock {\em Iterative regularization methods for nonlinear ill-posed problems}.
\newblock Walter de Gruyter GmbH \& Co. KG, Berlin, 2008.

\bibitem{Robbins_1951}
 H. Robbins and S. Monro.
\newblock {\em A stochastic approximation method}.
\newblock The Annals of Mathematical Statistics, 22(1951), pp. 400-407.

\bibitem{Jiao_2017}
 Y. Jiao, B. Jin and X. Lu.
\newblock {\em Preasymptotic convergence of randomized Kaczmarz method}.
\newblock Inverse Problems, 33(2017), 125012.

\bibitem{JinB_2019}
B. Jin and X. Lu.
\newblock {\em On the regularization property of stochastic gradient descent}.
\newblock Inverse Problems, 35(2019), 015004.

\bibitem{Lu_2022}
 S. Lu and P. Math$\acute{e}$.
\newblock {\em Stochastic gradient descent for linear inverse problems in Hilbert spaces}.
\newblock Mathematics of Computation, 91(2022), pp. 1763-1788.

\bibitem{JinB_2020}
B. Jin, Z. Zhou and J. Zou.
\newblock {\em On the convergence of stochastic gradient descent for nonlinear ill-posed problems}.
\newblock SIAM Journal on Optimization, 30(2020), pp. 1421-1450.

\bibitem{JinQ_2023}
 Q. Jin, X. Lu and L. Zhang.
\newblock {\em Stochastic mirror descent method for linear ill-posed problems in Banach spaces}. \newblock Inverse Problems, 39(2023), 065010.


\bibitem{JinBker2023}
 B. Jin and $\check{Z}$. Kereta.
\newblock {\em On the convergence of stochastic gradient descent for linear inverse problems in Banach spaces}. \newblock SIAM Journal on Imaging Sciences, 16(2023), pp. 675-701.

\bibitem{AlanBovik2006Handbook}
 A. C. Bovik.
\newblock {\em Handbook of image and video processing (communications, networking and multimedia)}.
\newblock{Academic Press, Orlando}, 2005.

\bibitem{Bhat_2007}
R. N. Bhattacharya and E. C. Waymire.
\newblock {\em A basic course in probability theory}.
\newblock New York: Springer, 2007.

\bibitem{Kowar_2002}
R. Kowar and O. Scherzer.
 \newblock {\em Convergence analysis of a Landweber-Kaczmarz method for solving nonlinear ill-posed problems}, in:
\newblock Ill-posed and Inverse Problems (Book Series), 23(2002), pp. 253-270.

\bibitem{Haltmeier_2007}
 M. Haltmeier, A. Leit$\tilde{a}$o and O. Scherzer.
\newblock {\em Kaczmarz methods for regularizing nonlinear ill-posed equations: I. convergence analysis}. \newblock Inverse Problems and Imaging, 1(2007), pp. 289-298.

\bibitem{Haltmeier_20071}
 M. Haltmeier, R. Kowar, A. Leit$\tilde{a}$o and O. Scherzer.
\newblock {\em Kaczmarz methods for regularizing nonlinear ill-posed equations. II. applications}. \newblock Inverse Problems and Imaging, 3(2007), pp. 507-523.

\bibitem{Leitao_2016}
 A. Leit$\tilde{a}$o and B. F. Svaiter.
\newblock {\em On projective Landweber-Kaczmarz methods for solving systems of nonlinear ill-posed equations}.
\newblock Inverse Problems, 32(2016), 025004.

\bibitem{LiHal_2018}
H. Li and M. Haltmeier.
\newblock {\em The averaged Kaczmarz iteration for solving inverse problems}.
\newblock SIAM Journal on Imaging Sciences, 11(2018), pp. 618-642.

\bibitem{JinQ_2016}
Q. Jin.
\newblock {\em Landweber-Kaczmarz method in Banach spaces with inexact inner solvers}.
\newblock Inverse Problems, 32(2016), 104005.

\bibitem{Bottou_2010}
 L. Bottou.
\newblock {\em Large-scale machine learning with stochastic gradient descent}.
\newblock Proceedings of COMPSTAT¡¯2010 (Berlin: Springer), 2010, pp. 177-186.

\bibitem{Bottou_2018}
L. Bottou, F. E. Curtis and J. Nocedal.
\newblock {\em Optimization methods for large-scale machine learning.}
\newblock SIAM Review, 60(2018), pp. 223-311.

\bibitem{Chen_2018}
 K. Chen, Q. Li, and J. Liu.
\newblock {\em Online learning in optical tomography: A stochastic approach}.
\newblock Inverse Problems, 34(2018), 075010.

\bibitem{Jia_2010}
 N. Jia and E. Y. Lam.
\newblock {\em Machine learning for inverse lithography: using stochastic gradient descent for robust photomask synthesis}.
\newblock Journal of Optics, 12(2010), 045601.


\bibitem{Rommel_2020}
 R. Real and Q. Jin.
\newblock {\em A revisit on Landweber iteration}.
\newblock Inverse Problems, 36(2020), 075011.

\bibitem{Jin_2022}
 Q. Jin.
\newblock {\em Convergence rates of a dual gradient method for constrained linear ill-posed problems}.
\newblock Numerische Mathematik, 151(2022), pp. 841-871.

\bibitem{2012-Clason-p505-536}
 C. Clason and B. Jin.
\newblock {\em A semismooth Newton method for nonlinear parameter identification
  problems with impulsive noise}.
\newblock SIAM Journal on Imaging Sciences, 5(2012), pp. 505--536.

\bibitem{Natterer2001}
 F. Natterer.
\newblock {\em The Mathematics of Computerized Tomography}.
\newblock Philadelphia, PA: SIAM, 2001.

\bibitem{Cioranescu1990Geometry}
 I. Cioranescu.
\newblock {\em Geometry of Banach spaces, duality mappings and nonlinear
  problems}.
\newblock Kluwer Academic Pub, 1990.

\bibitem{Schuster2012Regularization}
 T. Schuster, B. Kaltenbacher, B. Hofmann, and K. S. Kazimierski.
\newblock {\em Regularization methods in Banach Spaces}.
\newblock De Gruyter, 2012.

\bibitem{Zalinescu2002Convex}
 C. Zalinescu.
\newblock {\em Convex analysis in general vector spaces}.
\newblock World Scientific, 2002.

\bibitem{Schirotzek2007Nonsmooth}
 W. Schirotzek.
\newblock Nonsmooth analysis.
\newblock {\em Springer Berlin}, 266(2007), pp. 1--56.

\bibitem{JinWang2013}
 Q. Jin and W. Wang.
\newblock {\em Landweber iteration of Kaczmarz type with general non-smooth convex penalty functionals}.
\newblock Inverse Problems, 29(2013),  pp. 1400--1416.

\bibitem{MR3001307}
B. Jin and P. Maass.
\newblock {\em Sparsity regularization for parameter identification problems}.
\newblock Inverse Problems, 28(2012), 123001.

\bibitem{Zhu2008An}
 M. Zhu and T. Chan.
\newblock {\em An efficient primal-dual hybrid gradient algorithm for total
  variation image restoration}.
\newblock CAM Report, 08-34, UCLA, 2008.

\bibitem{Hansen2012}
 P. C. Hansen and M. Saxild-Hansen.
\newblock {\em AIR tools-a MATLAB package of algebraic iterative reconstruction methods}.
\newblock Journal of Computational and Applied Mathematics, 236(2012), pp.  2167-2178.

\bibitem{Rudin1992Nonlinear}
 L. I. Rudin, S. Osher, and E. Fatemi.
\newblock {\em Nonlinear total variation based noise removal algorithms}.
\newblock In Eleventh International Conference of the Center for Nonlinear
  Studies on Experimental Mathematics: Computational Issues in Nonlinear
  Science: Computational Issues in Nonlinear Science, 1992.

\bibitem{Hanafy1991}
 A. Hanafy and C. I. Zanelli.
\newblock {\em Quantitative real-time pulsed Schlieren imaging of ultrasonic waves}.
\newblock Proceedings of IEEE Ultrasonics Symposium, 2(1991), pp. 1223-1227.

\bibitem{2019-Clason-p789-832}
C. Clason and V. H. Nhu.
\newblock {\em Bouligand-Landweber iteration for a non-smooth ill-posed problem}.
\newblock Numerische Mathematik, 142(2019), pp. 789--832.

\bibitem{Cheng-YAMAMOTO-2000}
 J. Cheng and M. Yamamoto.
\newblock {\em One new strategy for a prior choice of regularizing parameters in Tikhonov's regularization}.
\newblock Inverse Problems, 16(2000), pp. L31-L38.

\bibitem{Fu-Jin-2020}
 Z. Fu, Q. Jin, Z. Zhang, B. Han, and Y. Chen.
\newblock {\em Analysis of a heuristic rule for the IRGNM in Banach spaces with convex regularization terms}.
\newblock Inverse Problems, 36(2020), 075002.

\bibitem{Jahn-Jin-2020}
T. Jahn and B. Jin.
\newblock {\em The discrepancy principle for stochastic gradient descent}.
\newblock Inverse Problems, 36(2020), 095009.

\bibitem{Jin-Chen-2024}
Q. Jin and L. Chen.
\newblock {\em Stochastic variance reduced gradient method for linear ill-posed inverse problems}.
\newblock https://doi.org/10.48550/arXiv.2403.12460, 2024.

 \bibitem{Jin-Liu-2024}
Q. Jin and D. Liu.
\newblock {\em Randomized block coordinate descent method for linear ill-posed problems}.
\newblock https://doi.org/10.48550/arXiv.2405.13340, 2024.


\bibitem{Johnson-Zhang-2013}
R. Johnson and T. Zhang.
\newblock {\em Accelerating stochastic gradient descent using predictive variance
 reduction.}
\newblock Advances in Neural Information Processing Systems, 2013, pp. 315-323.

\bibitem{Reddi-Hefny-2016}
S. J. Reddi, A. Hefny, S. Sra, B. Poczos, and A. Smola.
\newblock {\em Stochastic variance reduction for nonconvex optimization.}
\newblock In Proceedings of 33rd International Conference on Machine Learning, 2016, pp. 314-323.

\bibitem{Hertrich-Steidl-2022}
J. Hertrich and G. Steidl.
\newblock {\em Inertial stochastic PALM and applications in machine learning. }
\newblock Sampling Theory, Signal Process, and Data Analysis, 20(2022), pp. 1-32.

\bibitem{Gitman-Lang-2019}
I. Gitman, H. Lang, P. Zhang and L. Xiao.
\newblock {\em Understanding the role of momentum in stochastic gradient
 methods. }
\newblock In Advances in Neural Information Processing Systems, 2019, pp. 9633-9643.
\end{thebibliography}

\end{document}